\documentclass[11pt]{article}
 
\usepackage[usenames]{xcolor}
\usepackage{enumerate}

\usepackage[square,numbers]{natbib}
\bibliographystyle{abbrvnat}
\usepackage{undertilde}  
\usepackage{latexsym,amsmath,amssymb,amsbsy, amsthm}

\usepackage[left=1.2truein, right=1.2truein, top = 1.2truein, bottom = 1.2truein]{geometry}

\usepackage[CJKbookmarks=true,
            bookmarksnumbered=true,
			bookmarksopen=true,
			colorlinks=true,
			citecolor=red,
			linkcolor=blue,
			anchorcolor=red,
			urlcolor=blue]{hyperref}
\usepackage{setspace}
\usepackage{lscape,fancyhdr,fancybox}
\usepackage{graphicx,epsfig, xy}
\usepackage{float}
\usepackage{graphicx}
\usepackage{caption}
\usepackage{subcaption}



%
%
%
%
%
%
%
%
\newtheorem{cor}{Corollary}
\newcommand{\nb}[1]{\textcolor{blue}{\texttt{[#1]}}}
\newcommand{\ER}{{Erd\H{o}s--R\'{e}nyi}}

\newcommand{\bla}{\mathbf{u}}
\newcommand{\inn}[2]{
\langle #1 , #2 \rangle
}

\newcommand{\BSC}{{B}}
\begin{document}
\title{Asymptotic normality and analysis of variance of log-likelihood ratios in spiked random matrix models}
\author{Debapratim Banerjee and Zongming Ma \\
 ~\\
\textit{University of Pennsylvania}
}

\maketitle
 
\begin{abstract}
The present manuscript studies signal detection by likelihood ratio tests in a number of spiked random matrix models, including but not limited to Gaussian mixtures and spiked Wishart covariance matrices.
We work directly with multi-spiked cases in these models and with flexible priors on the signal component that allow dependence across spikes.
We derive asymptotic normality for the log-likelihood ratios when the signal-to-noise ratios are below certain thresholds.
In addition, we show that the variances of the log-likelihood ratios can be asymptotically decomposed as the sums of those of a collection of statistics which we call bipartite signed cycles.\\
~\\
\textit{Keywords:} Contiguity; Finite rank deformation; Principal Component Analysis; Random graphs; Signal detection.
\end{abstract}  


\newcommand{\wh}{\widehat}
\newcommand{\wt}{\widetilde}
\newcommand{\bem}{\begin{bmatrix}}
\newcommand{\eem}{\end{bmatrix}}

\newcommand{\hf}{1/2}
\newcommand{\eps}{\epsilon}
\newcommand{\vareps}{\varepsilon}

\newcommand{\eg}{e.g.\xspace}
\newcommand{\ie}{i.e.\xspace}
\newcommand{\iid}{i.i.d.\xspace}
\newcommand{\sfGamma}{{\mathsf{\Gamma}}}
\newcommand{\qsfGamma}{{\mathsf{\sqrt{\Gamma}}}}
\newcommand{\mmse}{{\mathsf{mmse}}}
\newcommand{\mse}{{\mathsf{mse}}}
\newcommand{\MMSE}{{\mathsf{MMSE}}}
\newcommand{\lmmse}{{\mathsf{lmmse}}}
\newcommand{\NGn}{N_{\sf G}}
\newcommand{\XGn}{X_{\sf G}}
\newcommand{\YGn}{Y_{\sf G}}
\newcommand{\DoF}{{\sf DoF}}
\newcommand{\dof}{{\sf dof}}

\newcommand{\ones}{\mathbf 1}
\newcommand{\reals}{{\mathbb{R}}}
\newcommand{\integers}{{\mathbb{Z}}}
\newcommand{\naturals}{{\mathbb{N}}}
\newcommand{\rationals}{{\mathbb{Q}}}
\newcommand{\naturalsex}{\overline{\mathbb{N}}}
\newcommand{\symm}{{\mbox{\bf S}}}  
\newcommand{\supp}{{\rm supp}}
\newcommand{\eexp}{{\rm e}}
\newcommand{\rexp}[1]{{\rm e}^{#1}}

\newcommand{\diff}{{\rm d}}
\newcommand{\deriv}[2]{{\frac{{\rm d} #1}{{\rm d} #2}}}
\newcommand{\Bigo}[1]{{O\left(#1\right)}}
\newcommand{\smallo}[1]{{o\left(#1\right)}}
\newcommand{\Sdelta}{{S_{\delta}^N}}
\newcommand{\deltaN}{{\left\lfloor \delta N \right\rfloor}}

\newcommand{\proj}{{\rm proj}}
\newcommand{\Null}{{\text{Null}}}
\newcommand{\Ker}{{\rm Ker}}
\newcommand{\Img}{{\rm Im}}
\newcommand{\Span}{{\rm span}}
\newcommand{\rank}{\mathop{\sf rank}}
\newcommand{\Tr}{\mathop{\sf Tr}}
\newcommand{\diag}{\mathop{\text{diag}}}
\newcommand{\lambdamax}{{\lambda_{\rm max}}}
\newcommand{\lambdamin}{\lambda_{\rm min}}
\newcommand{\E}{\mathrm{E}}
\newcommand{\Var}{\mathrm{Var}}
\newcommand{\CDF}{\mathsf{CDF}}
\newcommand{\Expect}{\mathbb{E}}
\newcommand{\expect}[1]{\mathbb{E} #1 }
\newcommand{\expects}[2]{\mathbb{E}_{#2}\left[ #1 \right]}
\newcommand{\tExpect}{{\tilde{\mathbb{E}}}}
\newcommand{\Prob}{\mathbb{P}}
\newcommand{\prob}[1]{{ \mathbb{P}\left\{ #1 \right\} }}
\newcommand{\tProb}{{\tilde{\mathbb{P}}}}
\newcommand{\tprob}[1]{{ \tProb\left\{ #1 \right\} }}
\newcommand{\hProb}{\hat{\mathbb{P}}}
\newcommand{\toprob}{\xrightarrow{\Prob}}
\newcommand{\tolp}[1]{\xrightarrow{L^{#1}}}
\newcommand{\toas}{\xrightarrow{{\rm a.s.}}}
\newcommand{\toae}{\xrightarrow{{\rm a.e.}}}
\newcommand{\todistr}{\xrightarrow{{\rm D}}}
\newcommand{\toweak}{\rightharpoonup}
\newcommand{\eqdistr}{{\stackrel{\rm (d)}{=}}}
\newcommand{\iiddistr}{{\stackrel{\text{\iid}}{\sim}}}
\newcommand{\Cov}{\text{Cov}}
\newcommand\indep{\protect\mathpalette{\protect\independenT}{\perp}}
\def\independenT#1#2{\mathrel{\rlap{$#1#2$}\mkern2mu{#1#2}}}
\newcommand{\Bern}{\text{Bern}}
\newcommand{\Binomial}{\text{Binomal}}

\newcommand{\Co}{{\mathop {\bf co}}} 
\newcommand{\dist}{\mathop{\bf dist{}}}
\newcommand{\argmin}{\mathop{\rm argmin}}
\newcommand{\argmax}{\mathop{\rm argmax}}
\newcommand{\epi}{\mathop{\bf epi}} 
\newcommand{\Vol}{\mathop{\bf vol}}
\newcommand{\dom}{\mathop{\bf dom}} 
\newcommand{\intr}{\mathop{\bf int}}
\newcommand{\closure}{\mathop{\bf cl}}

\newcommand{\trans}{^{\rm T}}
\newcommand{\Th}{{^{\rm th}}}
\newcommand{\diverge}{\to \infty}


\theoremstyle{remark}
\newtheorem{remark}{Remark}
\theoremstyle{plain}
\newtheorem{problem}{Problem}
\newtheorem{lemma}{Lemma}
\newtheorem{theorem}{Theorem}

\theoremstyle{definition}
\newtheorem{definition}{Definition}
\newtheorem{example}{Example}

\newtheorem{exer}{Exercise}
\newtheorem{soln}{Solution}
\newtheorem{condition}{Condition}

\theoremstyle{plain}
\newtheorem{algo}{Algorithm}
\newtheorem{coro}{Corollary}
\newtheorem{prop}{Proposition}
\newtheorem{claim}{Claim}
\newtheorem{fprop}{False proposition}
\newtheorem{assu}{Assumption}


\newenvironment{colortext}[1]{\color{#1}}{}

\newcommand{\lunder}[1]{{\underset{\raise0.3em\hbox{$\smash{\scriptscriptstyle-}$}}{#1}}}

\newcommand{\floor}[1]{{\left\lfloor {#1} \right \rfloor}}
\newcommand{\ceil}[1]{{\left\lceil {#1} \right \rceil}}

\newcommand{\norm}[1]{\left\|{#1} \right\|}
\newcommand{\Norm}[1]{\|{#1} \|}
\newcommand{\lpnorm}[1]{\left\|{#1} \right\|_{p}}
\newcommand{\linf}[1]{\left\|{#1} \right\|_{\infty}}
\newcommand{\lnorm}[2]{\left\|{#1} \right\|_{{#2}}}
\newcommand{\Lploc}[1]{L^{#1}_{\rm loc}}
\newcommand{\hellinger}{d_{\rm H}}
\newcommand{\Fnorm}[1]{\lnorm{#1}{\rm F}}
\newcommand{\fnorm}[1]{\|#1\|_{\rm F}}
\newcommand{\Opnorm}[1]{\lnorm{#1}{\rm op}}
\newcommand{\opnorm}[1]{\|#1\|_{\rm op}}
\newcommand{\maxnorm}[1]{\|#1\|_{\max}}

\newcommand{\iprod}[2]{\left \langle #1, #2 \right\rangle}
\newcommand{\Iprod}[2]{\langle #1, #2 \rangle}

\newcommand{\indc}[1]{{\mathbf{1}_{\left\{{#1}\right\}}}}
\newcommand{\Indc}{\mathbf{1}}

\def\innergetnumber#1[#2]#3{#2}
\def\getnumber{\expandafter\innergetnumber\jobname}

\newcommand{\Rdim}{{\dim_{\rm R}}}
\newcommand{\uRdim}{{\overline{\dim}_{\rm R}}}
\newcommand{\lRdim}{{\underline{\dim}_{\rm R}}}
\newcommand{\Idim}{{\dim_{\rm I}}}
\newcommand{\uIdim}{{\overline{\dim}_{\rm I}}}
\newcommand{\lIdim}{{\underline{\dim}_{\rm I}}}
\newcommand{\Bdim}{{\dim_{\rm B}}}
\newcommand{\uBdim}{\overline{\dim}_{\rm B}}
\newcommand{\lBdim}{\underline{\dim}_{\rm B}}
\newcommand{\Hdim}{{\dim_{\rm H}}}
\newcommand{\inddim}{{\dim_{\rm ind}}}
\newcommand{\locdim}{{\dim_{\rm loc}}}
\newcommand{\od}{{\overline{d}}}
\newcommand{\ud}{{\underline{d}}}

\newcommand{\bszero}{{\boldsymbol{0}}}
\newcommand{\bsvphi}{{\boldsymbol{\varphi}}}
\newcommand{\bsPhi}{{\boldsymbol{\Phi}}}
\newcommand{\bsXi}{{\boldsymbol{\Xi}}}

\newcommand{\bsa}{{\boldsymbol{a}}}
\newcommand{\bsb}{{\boldsymbol{b}}}
\newcommand{\bsc}{{\boldsymbol{c}}}
\newcommand{\bsd}{{\boldsymbol{d}}}
\newcommand{\bse}{{\boldsymbol{e}}}
\newcommand{\bsf}{{\boldsymbol{f}}}
\newcommand{\bsg}{{\boldsymbol{g}}}
\newcommand{\bsh}{{\boldsymbol{h}}}
\newcommand{\bsi}{{\boldsymbol{i}}}
\newcommand{\bsj}{{\boldsymbol{j}}}
\newcommand{\bsk}{{\boldsymbol{k}}}
\newcommand{\bsl}{{\boldsymbol{l}}}
\newcommand{\bsm}{{\boldsymbol{m}}}
\newcommand{\bsn}{{\boldsymbol{n}}}
\newcommand{\bso}{{\boldsymbol{o}}}
\newcommand{\bsp}{{\boldsymbol{p}}}
\newcommand{\bsq}{{\boldsymbol{q}}}
\newcommand{\bsr}{{\boldsymbol{r}}}
\newcommand{\bss}{{\boldsymbol{s}}}
\newcommand{\bst}{{\boldsymbol{t}}}
\newcommand{\bsu}{{\boldsymbol{u}}}
\newcommand{\bsv}{{\boldsymbol{v}}}
\newcommand{\bsw}{{\boldsymbol{w}}}
\newcommand{\bsx}{{\boldsymbol{x}}}
\newcommand{\bsy}{{\boldsymbol{y}}}
\newcommand{\bsz}{{\boldsymbol{z}}}
\newcommand{\bsA}{{\boldsymbol{A}}}
\newcommand{\bsB}{{\boldsymbol{B}}}
\newcommand{\bsC}{{\boldsymbol{C}}}
\newcommand{\bsD}{{\boldsymbol{D}}}
\newcommand{\bsE}{{\boldsymbol{E}}}
\newcommand{\bsF}{{\boldsymbol{F}}}
\newcommand{\bsG}{{\boldsymbol{G}}}
\newcommand{\bsH}{{\boldsymbol{H}}}
\newcommand{\bsI}{{\boldsymbol{I}}}
\newcommand{\bsJ}{{\boldsymbol{J}}}
\newcommand{\bsK}{{\boldsymbol{K}}}
\newcommand{\bsL}{{\boldsymbol{L}}}
\newcommand{\bsM}{{\boldsymbol{M}}}
\newcommand{\bsN}{{\boldsymbol{N}}}
\newcommand{\bsO}{{\boldsymbol{O}}}
\newcommand{\bsP}{{\boldsymbol{P}}}
\newcommand{\bsQ}{{\boldsymbol{Q}}}
\newcommand{\bsR}{{\boldsymbol{R}}}
\newcommand{\bsS}{{\boldsymbol{S}}}
\newcommand{\bsT}{{\boldsymbol{T}}}
\newcommand{\bsU}{{\boldsymbol{U}}}
\newcommand{\bsV}{{\boldsymbol{V}}}
\newcommand{\bsW}{{\boldsymbol{W}}}
\newcommand{\bsX}{{\boldsymbol{X}}}
\newcommand{\bsY}{{\boldsymbol{Y}}}
\newcommand{\bsZ}{{\boldsymbol{Z}}}

\newcommand{\bbA}{{\mathbb{A}}}
\newcommand{\bbB}{{\mathbb{B}}}
\newcommand{\bbC}{{\mathbb{C}}}
\newcommand{\bbD}{{\mathbb{D}}}
\newcommand{\bbE}{{\mathbb{E}}}
\newcommand{\bbF}{{\mathbb{F}}}
\newcommand{\bbG}{{\mathbb{G}}}
\newcommand{\bbH}{{\mathbb{H}}}
\newcommand{\bbI}{{\mathbb{I}}}
\newcommand{\bbJ}{{\mathbb{J}}}
\newcommand{\bbK}{{\mathbb{K}}}
\newcommand{\bbL}{{\mathbb{L}}}
\newcommand{\bbM}{{\mathbb{M}}}
\newcommand{\bbN}{{\mathbb{N}}}
\newcommand{\bbO}{{\mathbb{O}}}
\newcommand{\bbP}{{\mathbb{P}}}
\newcommand{\bbQ}{{\mathbb{Q}}}
\newcommand{\bbR}{{\mathbb{R}}}
\newcommand{\bbS}{{\mathbb{S}}}
\newcommand{\bbT}{{\mathbb{T}}}
\newcommand{\bbU}{{\mathbb{U}}}
\newcommand{\bbV}{{\mathbb{V}}}
\newcommand{\bbW}{{\mathbb{W}}}
\newcommand{\bbX}{{\mathbb{X}}}
\newcommand{\bbY}{{\mathbb{Y}}}
\newcommand{\bbZ}{{\mathbb{Z}}}

\newcommand{\sfa}{{\mathsf{a}}}
\newcommand{\sfc}{{\mathsf{c}}}
\newcommand{\sfd}{{\mathsf{d}}}
\newcommand{\sfe}{{\mathsf{e}}}
\newcommand{\sff}{{\mathsf{f}}}
\newcommand{\sfg}{{\mathsf{g}}}
\newcommand{\sfh}{{\mathsf{h}}}
\newcommand{\sfi}{{\mathsf{i}}}
\newcommand{\sfj}{{\mathsf{j}}}
\newcommand{\sfk}{{\mathsf{k}}}
\newcommand{\sfl}{{\mathsf{l}}}
\newcommand{\sfm}{{\mathsf{m}}}
\newcommand{\sfn}{{\mathsf{n}}}
\newcommand{\sfo}{{\mathsf{o}}}
\newcommand{\sfp}{{\mathsf{p}}}
\newcommand{\sfq}{{\mathsf{q}}}
\newcommand{\sfr}{{\mathsf{r}}}
\newcommand{\sfs}{{\mathsf{s}}}
\newcommand{\sft}{{\mathsf{t}}}
\newcommand{\sfu}{{\mathsf{u}}}
\newcommand{\sfv}{{\mathsf{v}}}
\newcommand{\sfw}{{\mathsf{w}}}
\newcommand{\sfx}{{\mathsf{x}}}
\newcommand{\sfy}{{\mathsf{y}}}
\newcommand{\sfz}{{\mathsf{z}}}
\newcommand{\sfA}{{\mathsf{A}}}
\newcommand{\sfB}{{\mathsf{B}}}
\newcommand{\sfC}{{\mathsf{C}}}
\newcommand{\sfD}{{\mathsf{D}}}
\newcommand{\sfE}{{\mathsf{E}}}
\newcommand{\sfF}{{\mathsf{F}}}
\newcommand{\sfG}{{\mathsf{G}}}
\newcommand{\sfH}{{\mathsf{H}}}
\newcommand{\sfI}{{\mathsf{I}}}
\newcommand{\sfJ}{{\mathsf{J}}}
\newcommand{\sfK}{{\mathsf{K}}}
\newcommand{\sfL}{{\mathsf{L}}}
\newcommand{\sfM}{{\mathsf{M}}}
\newcommand{\sfN}{{\mathsf{N}}}
\newcommand{\sfO}{{\mathsf{O}}}
\newcommand{\sfP}{{\mathsf{P}}}
\newcommand{\sfQ}{{\mathsf{Q}}}
\newcommand{\sfR}{{\mathsf{R}}}
\newcommand{\sfS}{{\mathsf{S}}}
\newcommand{\sfT}{{\mathsf{T}}}
\newcommand{\sfU}{{\mathsf{U}}}
\newcommand{\sfV}{{\mathsf{V}}}
\newcommand{\sfW}{{\mathsf{W}}}
\newcommand{\sfX}{{\mathsf{X}}}
\newcommand{\sfY}{{\mathsf{Y}}}
\newcommand{\sfZ}{{\mathsf{Z}}}

\newcommand{\calA}{{\mathcal{A}}}
\newcommand{\calB}{{\mathcal{B}}}
\newcommand{\calC}{{\mathcal{C}}}
\newcommand{\calD}{{\mathcal{D}}}
\newcommand{\calE}{{\mathcal{E}}}
\newcommand{\calF}{{\mathcal{F}}}
\newcommand{\calG}{{\mathcal{G}}}
\newcommand{\calH}{{\mathcal{H}}}
\newcommand{\calI}{{\mathcal{I}}}
\newcommand{\calJ}{{\mathcal{J}}}
\newcommand{\calK}{{\mathcal{K}}}
\newcommand{\calL}{{\mathcal{L}}}
\newcommand{\calM}{{\mathcal{M}}}
\newcommand{\calN}{{\mathcal{N}}}
\newcommand{\calO}{{\mathcal{O}}}
\newcommand{\calP}{{\mathcal{P}}}
\newcommand{\calQ}{{\mathcal{Q}}}
\newcommand{\calR}{{\mathcal{R}}}
\newcommand{\calS}{{\mathcal{S}}}
\newcommand{\calT}{{\mathcal{T}}}
\newcommand{\calU}{{\mathcal{U}}}
\newcommand{\calV}{{\mathcal{V}}}
\newcommand{\calW}{{\mathcal{W}}}
\newcommand{\calX}{{\mathcal{X}}}
\newcommand{\calY}{{\mathcal{Y}}}
\newcommand{\calZ}{{\mathcal{Z}}}

\newcommand{\frakd}{{\mathscr{d}}}
\newcommand{\ofrakd}{{\overline{\frakd}}}
\newcommand{\ufrakd}{{\underline{\frakd}}}

\newcommand{\scrD}{{\mathscr{D}}}
\newcommand{\oscrD}{{\overline{\scrD}}}
\newcommand{\uscrD}{{\underline{\scrD}}}
\newcommand{\scrJ}{{\mathscr{J}}}
\newcommand{\oscrJ}{{\overline{\scrJ}}}
\newcommand{\uscrJ}{{\underline{\scrJ}}}

\newcommand{\bfa}{{\mathbf{a}}}
\newcommand{\bfb}{{\mathbf{b}}}
\newcommand{\bfc}{{\mathbf{c}}}
\newcommand{\bfd}{{\mathbf{d}}}
\newcommand{\bfe}{{\mathbf{e}}}
\newcommand{\bff}{{\mathbf{f}}}
\newcommand{\bfg}{{\mathbf{g}}}
\newcommand{\bfh}{{\mathbf{h}}}
\newcommand{\bfi}{{\mathbf{i}}}
\newcommand{\bfj}{{\mathbf{j}}}
\newcommand{\bfk}{{\mathbf{k}}}
\newcommand{\bfl}{{\mathbf{l}}}
\newcommand{\bfm}{{\mathbf{m}}}
\newcommand{\bfn}{{\mathbf{n}}}
\newcommand{\bfo}{{\mathbf{o}}}
\newcommand{\bfp}{{\mathbf{p}}}
\newcommand{\bfq}{{\mathbf{q}}}
\newcommand{\bfr}{{\mathbf{r}}}
\newcommand{\bfs}{{\mathbf{s}}}
\newcommand{\bft}{{\mathbf{t}}}
\newcommand{\bfu}{{\mathbf{u}}}
\newcommand{\bfv}{{\mathbf{v}}}
\newcommand{\bfw}{{\mathbf{w}}}
\newcommand{\bfx}{{\mathbf{x}}}
\newcommand{\bfy}{{\mathbf{y}}}
\newcommand{\bfz}{{\mathbf{z}}}
\newcommand{\bfA}{{\mathbf{A}}}
\newcommand{\bfB}{{\mathbf{B}}}
\newcommand{\bfC}{{\mathbf{C}}}
\newcommand{\bfD}{{\mathbf{D}}}
\newcommand{\bfE}{{\mathbf{E}}}
\newcommand{\bfF}{{\mathbf{F}}}
\newcommand{\bfG}{{\mathbf{G}}}
\newcommand{\bfH}{{\mathbf{H}}}
\newcommand{\bfI}{{\mathbf{I}}}
\newcommand{\bfJ}{{\mathbf{J}}}
\newcommand{\bfK}{{\mathbf{K}}}
\newcommand{\bfL}{{\mathbf{L}}}
\newcommand{\bfM}{{\mathbf{M}}}
\newcommand{\bfN}{{\mathbf{N}}}
\newcommand{\bfO}{{\mathbf{O}}}
\newcommand{\bfP}{{\mathbf{P}}}
\newcommand{\bfQ}{{\mathbf{Q}}}
\newcommand{\bfR}{{\mathbf{R}}}
\newcommand{\bfS}{{\mathbf{S}}}
\newcommand{\bfT}{{\mathbf{T}}}
\newcommand{\bfU}{{\mathbf{U}}}
\newcommand{\bfV}{{\mathbf{V}}}
\newcommand{\bfW}{{\mathbf{W}}}
\newcommand{\bfX}{{\mathbf{X}}}
\newcommand{\bfY}{{\mathbf{Y}}}
\newcommand{\bfZ}{{\mathbf{Z}}}

\newcommand{\bara}{{\bar{a}}}
\newcommand{\barb}{{\bar{b}}}
\newcommand{\barc}{{\bar{c}}}
\newcommand{\bard}{{\bar{d}}}
\newcommand{\bare}{{\bar{e}}}
\newcommand{\barf}{{\bar{f}}}
\newcommand{\barg}{{\bar{g}}}
\newcommand{\barh}{{\bar{h}}}
\newcommand{\bari}{{\bar{i}}}
\newcommand{\barj}{{\bar{j}}}
\newcommand{\bark}{{\bar{k}}}
\newcommand{\barl}{{\bar{l}}}
\newcommand{\barm}{{\bar{m}}}
\newcommand{\barn}{{\bar{n}}}
\newcommand{\barp}{{\bar{p}}}
\newcommand{\barq}{{\bar{q}}}
\newcommand{\barr}{{\bar{r}}}
\newcommand{\bars}{{\bar{s}}}
\newcommand{\bart}{{\bar{t}}}
\newcommand{\baru}{{\bar{u}}}
\newcommand{\barv}{{\bar{v}}}
\newcommand{\barw}{{\bar{w}}}
\newcommand{\barx}{{\bar{x}}}
\newcommand{\bary}{{\bar{y}}}
\newcommand{\barz}{{\bar{z}}}
\newcommand{\barA}{{\bar{A}}}
\newcommand{\barB}{{\bar{B}}}
\newcommand{\barC}{{\bar{C}}}
\newcommand{\barD}{{\bar{D}}}
\newcommand{\barE}{{\bar{E}}}
\newcommand{\barF}{{\bar{F}}}
\newcommand{\barG}{{\bar{G}}}
\newcommand{\barH}{{\bar{H}}}
\newcommand{\barI}{{\bar{I}}}
\newcommand{\barJ}{{\bar{J}}}
\newcommand{\barK}{{\bar{K}}}
\newcommand{\barL}{{\bar{L}}}
\newcommand{\barM}{{\bar{M}}}
\newcommand{\barN}{{\bar{N}}}
\newcommand{\barO}{{\bar{O}}}
\newcommand{\barP}{{\bar{P}}}
\newcommand{\barQ}{{\bar{Q}}}
\newcommand{\barR}{{\bar{R}}}
\newcommand{\barS}{{\bar{S}}}
\newcommand{\barT}{{\bar{T}}}
\newcommand{\barU}{{\bar{U}}}
\newcommand{\barV}{{\bar{V}}}
\newcommand{\barW}{{\bar{W}}}
\newcommand{\barX}{{\bar{X}}}
\newcommand{\barY}{{\bar{Y}}}
\newcommand{\barZ}{{\bar{Z}}}

\newcommand{\tmu}{{\tilde{\mu}}}
\newcommand{\tf}{{\tilde{f}}}
\newcommand{\tilh}{{\tilde{h}}}
\newcommand{\tu}{{\tilde{u}}}
\newcommand{\tx}{{\tilde{x}}}
\newcommand{\ty}{{\tilde{y}}}
\newcommand{\tz}{{\tilde{z}}}
\newcommand{\tA}{{\tilde{A}}}
\newcommand{\tB}{{\tilde{B}}}
\newcommand{\tC}{{\tilde{C}}}
\newcommand{\tD}{{\tilde{D}}}
\newcommand{\tE}{{\tilde{E}}}
\newcommand{\tF}{{\tilde{F}}}
\newcommand{\tG}{{\tilde{G}}}
\newcommand{\tH}{{\tilde{H}}}
\newcommand{\tI}{{\tilde{I}}}
\newcommand{\tJ}{{\tilde{J}}}
\newcommand{\tK}{{\tilde{K}}}
\newcommand{\tL}{{\tilde{L}}}
\newcommand{\tM}{{\tilde{M}}}
\newcommand{\tN}{{\tilde{N}}}
\newcommand{\tO}{{\tilde{O}}}
\newcommand{\tP}{{\tilde{P}}}
\newcommand{\tQ}{{\tilde{Q}}}
\newcommand{\tR}{{\tilde{R}}}
\newcommand{\tS}{{\tilde{S}}}
\newcommand{\tT}{{\tilde{T}}}
\newcommand{\tU}{{\tilde{U}}}
\newcommand{\tV}{{\tilde{\V}}}
\newcommand{\tW}{{\tilde{W}}}
\newcommand{\tX}{{\tilde{X}}}
\newcommand{\tY}{{\tilde{Y}}}
\newcommand{\tZ}{{\tilde{Z}}}

\newcommand{\binum}{{\{0,1\}}}
\newcommand{\spt}[1]{{\rm spt}(#1)}
\newcommand{\comp}[1]{{#1^{\rm c}}}
\newcommand{\Leb}{{\rm Leb}}

\newcommand{\quant}[1]{{\left\langle #1\right\rangle}}
\newcommand{\sgn}{{\rm sgn}}

\newcommand{\ntok}[2]{{#1,\ldots,#2}}

\newcommand{\uc}{{uniformly continuous}}
\newcommand{\holderc}{{H\"older continuous}}
\newcommand{\diam}{{\rm diam}}
\newcommand{\cardi}[1]{{\mathbf{card}(#1)}}
\newcommand{\restrict}[2]{\left.#1\right|_{#2}}

\newcommand{\Lip}{\mathrm{Lip}}

\newcommand{\id}{{\mathsf{id}}}
\newcommand{\GL}{{\mathrm{GL}}}

\newcommand{\renyi}{R\'enyi\xspace}
\newcommand{\Holder}{H\"older\xspace}
\newcommand{\levy}{L\'evy\xspace}

\newcommand{\rstable}{{\overline{\sfR}}}
\newcommand{\rlip}{{\sfR}}
\newcommand{\rll}{{\hat{\sfR}}}
\newcommand{\ruc}{{\sfR_0}}
\newcommand{\rucc}{{\sfR_0}}
\newcommand{\rlinear}{{\sfR^*}}
\newcommand{\rlinearsw}{{\sfR^*_{\rm SW}}}
\newcommand{\rB}{{\sfR_{\sf B}}}

\newcommand{\Dstar}{D^*}
\newcommand{\DLstar}{D^*_{\rm L}}
\newcommand{\DL}{D_{\rm L}}
\newcommand{\Rstar}{\calR^*}
\newcommand{\RLstar}{\calR^*_{\rm L}}
\newcommand{\RL}{\calR_{\rm L}}
\newcommand{\zstar}{\zeta^*}
\newcommand{\zLstar}{\zeta^*_{\rm L}}
\newcommand{\zL}{\zeta_{\rm L}}
\newcommand{\xstar}{\xi^*}
\newcommand{\xLstar}{\xi^*_{\rm L}}
\newcommand{\xL}{\xi_{\rm L}}

\newcommand{\ssec}[1]{\subsection{#1}}
\newcommand{\sssec}[1]{\subsubsection{#1}}
\newcommand{\separator}{\noindent \rule[0pt]{\textwidth}{2pt}}

\newcommand{\pth}[1]{\left( #1 \right)}
\newcommand{\qth}[1]{\left[ #1 \right]}
\newcommand{\sth}[1]{\left\{ #1 \right\}}
\newcommand{\bpth}[1]{\Bigg( #1 \Bigg)} 
\newcommand{\bqth}[1]{\Bigg[ #1 \Bigg]}
\newcommand{\bsth}[1]{\Bigg\{ #1 \Bigg\}}
\newcommand{\bfloor}[1]{\Bigg\lfloor #1 \Bigg\rfloor}

\newcommand{\fracp}[2]{\frac{\partial #1}{\partial #2}}
\newcommand{\fracpk}[3]{\frac{\partial^{#3} #1}{\partial #2^{#3}}}
\newcommand{\fracd}[2]{\frac{\diff #1}{\diff #2}}
\newcommand{\fracdk}[3]{\frac{\diff^{#3} #1}{\diff #2^{#3}}}

\newcommand{\pushf}[2]{#1_{\sharp} #2}

\newcommand{\KL}[2]{D(#1 \, || \, #2)}
\newcommand{\Fisher}[2]{I(#1 \, || \, #2)}
\newcommand{\TV}{{\sf TV}}
\newcommand{\LC}{{\sf LC}}

\newcommand{\dham}{d_{\sf H}}
\newcommand{\ellmax}{\ell_{\max}}

\newcommand{\bits}{\texttt{bits}\xspace}
\newcommand{\nats}{{\texttt{nats}}}
\newcommand{\snr}{{\mathsf{snr}}}
\newcommand{\qsnr}{{\sqrt{\mathsf{snr}}}}
\newcommand{\SNR}{{\mathsf{SNR}}}
\newcommand{\INR}{{\mathsf{INR}}}
\newcommand{\SIR}{{\mathsf{SIR}}}

\newcommand{\todolong}[1]{\todo[inline,color=red!40]{#1}}

\newcommand{\xxx}{\textbf{xxx}\xspace}


\section{Introduction} 

An important class of signal detection problems share the following hypothesis testing framework.
Under the null hypothesis, the observed data matrix consists of pure noise.
Under the alternative, its has a ``signal $+$ noise'' structure, where the signal component is of low rank and certain knowledge can be encoded as a prior distribution on the signal.

We consider two different cases of the problem. 
\begin{itemize}
\item {In the unnormalized case, we assume that the observed data matrix $X$ is in $\reals^{n\times p}$. Let $Z = (Z_{ij})\in \reals^{n\times p}$ with $Z_{ij} \stackrel{iid}{\sim} N(0,1)$. 
We aim to test 
\begin{equation}
\label{eq:bipartite}
H_0: X = Z, \quad \mbox{vs.} \quad
H_1: X = \frac{1}{\sqrt{p}} \Theta U' + Z,
\end{equation}
where $\Theta \in \reals^{n\times \kappa}$ follows some prior distribution $\pi_{\Theta}$ and $U \in \reals^{p\times\kappa}$ follows some prior distribution $\pi_{U}$ and $\Theta, U$ and $Z$ are mutually independent.
Throughout, we assume that under $\pi_\Theta$ (and $\pi_U$, resp.), the rows of $\Theta$ ($U$, resp.) are i.i.d.~random vectors with $\Expect[\Theta_{1*}]=0$ and $\Cov(\Theta_{1*}'\Theta_{1*}) = \Sigma_\Theta$ (with $\Expect[U_{1*}]=0$ and $\Cov(U_{1*}'U_{1*}) = \Sigma_U$, resp.).
In other words, we assume that $X_{ij} \stackrel{iid}{\sim} N(0,1)$ under $H_0$, and under $H_1$, $X_{ij}|(\Theta, U) \stackrel{ind}{\sim} N(\frac{1}{\sqrt{p}}\sum_{l=1}^\kappa \Theta_{i l}U_{jl},1)$ where $\Expect[\Theta_{il}] = 0$, $\Expect[U_{jl}] = 0$, $\Expect[\Theta_{il_1}\Theta_{il_2}] = \Sigma_\Theta(l_1,l_2)$ and
$\Expect[U_{j l_1} U_{j l_2}] = \Sigma_U(l_1,l_2)$ for $l,l_1,l_2 \in [\kappa]$.
Here and after, for any matrix $A\in\reals^{n\times p}$, $A'$ stands for its transposition, $A_{i*}\in \reals^{1\times p}$ its $i$-th row, and $A_{*j}\in \reals^{n\times 1}$ its $j$-th column. 
We use $A_{ij}$ and $A(i,j)$ exchangeably to denote its $(i,j)$-th entry.
For any positive integer $l$, $[l] = \{1,2,\dots, l\}$.
}	
Under $H_1$, if the distribution of $\Theta_{i*}$ is discrete and takes a finite number of values, then conditioning on $U$ the rows of $X$ in \eqref{eq:bipartite} can be viewed as i.i.d.~observations from a Gaussian mixture distribution.

\item In the normalized case, we test
 \begin{equation}
\label{eq:bipartitevar}
H_0: X = Z, \quad \mbox{vs.} \quad
H_1: X = \Theta V' + Z,
\end{equation}
where $V= U(U'U)^{-{1}/{2}}$ and $U$ is defined as is in the previous case. 
In other words, $V$ is a self-normalized version of $U$ such that $V'V = I_\kappa$ and so $V\in O(p,\kappa)$, i.e., the Stiefel manifold consisting of all $\kappa$-frames in $\reals^p$. 
Under $H_1$, if $\Theta_{i*}\stackrel{iid}{\sim} N(0,\Sigma_\Theta)$ with $\Sigma_\Theta = H = \mathrm{diag}(h_1,\dots,h_\kappa)$ with $h_1\geq \cdots\geq h_\kappa > 0$, then conditioning on $V$ the rows of $X$ are i.i.d.~observations from a $p$-dimensional normal distribution with mean $0$ and multi-spiked covariance matrix $V H V' + I_p$. 
Here $I_p$ is the $p$-dimensional identity matrix.
In this case, \eqref{eq:bipartitevar} reduces to the high dimensional sphericity test against multi-spiked alternative.
\end{itemize}
In either case, we aim to detect a spiked random matrix model against the null. 
Moreover, 
we deal with simple vs.~simple hypothesis testing since we put some prior distribution on the signal component under the alternative.
Therefore, the Neyman--Pearson lemma dictates that the likelihood ratio test is optimal.

In this manuscript, we are concerned with the asymptotic behavior of likelihood ratios in the aforementioned testing problems.
In particular, let $p=p_n$ scale with $n$. 
We are interested in the asymptotic regime where 
\begin{equation}
\label{eq:asymp}
\mbox{$p/n\to \gamma \in (0,\infty)$ as $n\to\infty$.}
\end{equation}
Let $\bbP_{0,n}$ be the null distribution and $\bbP_{1,n}$ the alternative.
Let $L_n = \frac{\diff \bbP_{1,n}}{\diff \bbP_{0,n}}$ denote the likelihood ratio, and we call $\log(L_n)$ the log-likelihood ratio.

A series of papers have discovered the following general phenomenon in these testing problems.
Depending on the signal-to-noise ratio (SNR), there are two different types of asymptotic behavior of the likelihood ratio.
If the SNR is below certain threshold, then $L_n$ has a nontrivial weak limit, and the null and alternative distributions are mutually contiguous.
When the SNR is sufficiently large, the likelihood ratio converges to zero under null and diverges to infinity under alternative in probability as $n$ tends to infinity. 
In this case, the two distributions are asymptotically singular.
\citet{Banks17} focused on locating the boundary between asymptotically contiguous and singular regimes in both model \eqref{eq:bipartite} and its symmetric counterpart known as the Gaussian spiked Wigner model.
\citet{perry2016optimality} investigated the same issue in three single-spiked models: Gaussian spiked Wigner model, non-Gaussian spiked Wigner model and spiked Wishart model. In addition, they determined when spectral method, i.e., PCA, is optimal or sub-optimal in these models.
The single-spiked Wishart model they considered is a special case of model \eqref{eq:bipartite} with $\kappa = 1$.
In a slightly different line of research, Onatski et al.~\cite{onatski13,onatski14} (see also \cite{johnstone15}) derived asymptotic normality of the likelihood ratio under both single and multiple spiked Wishart models for all contiguous alternative distributions, provided that the prior on the leading eigenvectors is the uniform probability measure on the corresponding Stiefel manifold. 
The scenario they considered can be viewed as model \eqref{eq:bipartitevar} with the entries of $U$ sampled independently and identically from a standard normal distribution.
Furthermore, \citet{Alaoui17} and \citet{Alaoui18}, among other results, derived asymptotic normality results for log-likelihood ratios for single spiked Wigner and Wishart models, which allowed for priors with uniformly bounded support size on each entry of the leading eigenvector or the leading singular vector pair. 
Their approach is borrowed from spin glass literature and uses cavity method.
For single spiked Wishart models, their result is complementary to that in \cite{onatski13} as it required a bounded support condition on the prior and hence excluded the Gaussian prior that underpins the result in \cite{onatski13}.


\subsection{Main contributions}
The main contributions of the present manuscript are two-folded:
\begin{enumerate}[(1)]
\item 
For both models \eqref{eq:bipartite} and \eqref{eq:bipartitevar}
and for a large collection of priors on $\Theta$ and $U$, 
we show that when the SNR is below certain threshold that depends only on $\gamma$ and second moments and sub-Gaussianity parameters of the priors, the null and the alternative distributions are asymptotically mutually contiguous and that the log-likelihood ratio has normal limits under both null and alternative as $n\to\infty$ and $p/n\to\gamma\in (0,\infty)$.
The limiting normal distributions have different means but the same variance, both of which depend only on $\gamma$ and second moments of the priors.
We allow any prior on $\Theta$ that assigns independent sub-Gaussian row vectors, and any such prior on $U$.
This allows rows of $\Theta$ (and $U$) to be i.i.d.~according to any multivariate normal distribution or any multivariate discrete/continuous distribution with bounded support, among other possibilities.
To the best of our limited knowledge, the present manuscript is the first to give such results for these multi-spiked signal detection problems beyond the case of uniform priors.

\item In either model, when the SNR is below the threshold under which we have asymptotic normality for the log-likelihood ratio, we show that under either null or alternative, the log-likelihood ratio can be decomposed as the weighted sum of a collection of statistics, defined later as bipartite signed cycles, which are asymptotically independently and normally distributed.
This provides an analysis of variance for the asymptotic log-likelihood ratio.
Such a
result sheds light on the source of randomness in the asymptotic likelihood ratio. 
In addition, it serves as a first step toward designing computationally efficient tests that can achieve the exact optimal power of the likelihood ratio test for the testing problems of interest.
See, for instance, the effort made in \cite{Banerjee2017} along this direction for testing \ER~random graphs against planted partition models.

\end{enumerate}

The approach we shall take in the present manuscript is inspired by a parallel line of research on contiguity and signal detection for \ER~random graphs and the planted partition model (i.e., symmetric two block stochastic block models).
\citet{janson95} introduced a second moment argument to study asymptotically contiguous random graph models with respect to random $d$-regular graphs where the degree parameter $d$ remains finite as the graph size tends to infinity.
In addition, he showed that the asymptotic likelihood ratios between these sparse random graph models are determined by counts of cycles on graphs.
\citet{MNS12} established a comparable set of results when studying the detection of planted partition model against \ER~random graphs in the asymptotic regime where the average degree of nodes remain finite when the graph size tends to infinity. 
They determined the exact boundary between asymptotically contiguous and singular regimes and showed that within the contiguous regime, the asymptotic log-likelihood ratio is determined by counts of cycles and has a Poisson mixture limit.
\citet{Ban16} studied the same \ER~model vs.~planted partition model detection problem in a different asymptotic regime where average degree and graph size tend to infinity together. 
Similar to \cite{MNS12}, he determined the exact boundary between contiguity and singularity while he also showed that in the contiguous regime, the asymptotic likelihood ratio is determined by a series of graph statistics called signed cycles as opposed to actual counts of cycles on graph.
The major tool in \cite{Ban16} is a Gaussian version of the second moment method in \cite{janson95}.
This second moment method also serves as the backbone of arguments in the present manuscript, while the major difficulty here lies in constructing and analyzing the collection of statistics that determine the asymptotic likelihood ratios for models \eqref{eq:bipartite} and \eqref{eq:bipartitevar}.
As we shall show later, the bipartite signed cycles will serve this purpose.


\subsection{Organization}
The rest of this manuscript is organized as the following. 
In Section \ref{sec:result}, we introduce the second moment method, define bipartite signed cycles, and state the main theorems.
Section \ref{sec:cycle} establishes the asymptotic normality of bipartite signed cycles and Section \ref{sec:proof} proves the main theorems.
The appendix presents the details of the second moment method.


%
%
%
%


\section{Main results}
\label{sec:result}
\subsection{Preliminaries}

\paragraph{Contiguity}
For two sequences of probability measures $\bbP_n$ and $\bbQ_n$ defined on $\sigma$-fields $(\Omega_n,\calF_n)$,
we say that $\bbQ_n$ is contiguous with respect to $\bbP_n$, denoted by $\bbQ_n \triangleleft \bbP_n$, if for any event sequence $A_n$, $\bbP_n(A_n)\to 0$ implies $\bbQ_n(A_n)\to 0$.
We say that they are (asymptotically) mutually contiguous, denoted by $\bbP_n \triangleleft\triangleright \bbQ_n$, if both $\bbQ_n\triangleleft \bbP_n$ and $\bbP_n\triangleleft \bbQ_n$ hold.
We refer interested readers to \citet{LeCam} and \citet{LeCam00} for general discussions on contiguity.

To establish our main results, we rely on the following proposition for establishing contiguity and asymptotic normality of log-likelihood ratios.
For any two probability measure $\bbP$ and $\bbQ$ on the same probability space, we write $\bbQ\ll \bbP$ when $\bbQ$ is absolutely continuous with respect to $\bbP$.

\begin{prop}[Janson's second moment method] 
\label{prop:norcont}
Let $\bbP_n$ and $\bbQ_n$ be two sequences of probability measures such that for each $n$, both are defined on the common $\sigma$-algebra $(\Omega_n, \calF_n)$.
Suppose that for each $i\geq 1$, $W_{n,i}$ are random variables defined on $(\Omega_n,\calF_n)$.
Then the probability measures $\bbP_n$ and $\bbQ_n$ are asymptotically mutually contiguous if the following conditions hold simultaneously:
\begin{enumerate}[(i)]
\item $\bbQ_n$ is absolutely continuous with respect to $\bbP_n$
for each $n$;
\item For any fixed $k\ge 1$, one has $\left( W_{n,1},\ldots, W_{n,k} \right)|\mathbb{P}_{n} \stackrel{d}{\to} \left(Z_{1}, \ldots, Z_{k}\right) $  and $\left( W_{n,1},\ldots, W_{n,k} \right)|\mathbb{Q}_{n} \stackrel{d}{\to} \left(Z'_{1}, \ldots, Z'_{k}\right)$.

\item $Z_{i}\sim N(0,\sigma_{i}^{2})$ and $Z'_{i} \sim N(\mu_{i},\sigma_{i}^{2})$ are sequences of independent random variables.

\item The likelihood ratio statistic $Y_n = \frac{\mathrm{d}\bbQ_n}{\mathrm{d}\bbP_n}$ satisfies 
\begin{equation}
	\label{eq:lr-square}
\limsup_{n\to\infty}\Expect_{\bbP_n}\qth{Y_n^2} \leq
\exp\sth{\sum_{i=1}^\infty \frac{\mu_i^2}{\sigma_i^2}} < \infty.
\end{equation}
\end{enumerate}
Furthermore, we have that
under $\bbP_n$,
\begin{equation}
	\label{eq:lr-limit}
Y_n \stackrel{d}{\to} \exp\sth{\sum_{i=1}^\infty \frac{\mu_i Z_i - \frac{1}{2}\mu_i^2}{\sigma_i^2}}.
\end{equation}
and that given any $\epsilon,\delta>0$ there exists a natural number $K=K(\delta,\epsilon)$ such that for any sequence $n_l$ there is a further sub-sequence $n_{l_m}$ such that 
\begin{equation}
	\label{eq:janson-decomp}
\limsup_{m\to\infty} \mathbb{P}_{n_{l_m}}\left[ \left| \log(Y_{n_{l_m}}) - \left\{\sum_{k=1}^{K} \frac{2\mu_{k}\left(W_{n_{l_m},k}\right)-\mu_{k}^{2}}{\sigma_{k}^{2}} \right\}\right|\ge \epsilon \right] \le  \delta.
\end{equation}
\end{prop}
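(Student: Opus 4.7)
The plan is to approximate $Y_n$ in $L^{2}(\bbP_n)$ by the truncated proxy
\begin{equation*}
\tilde Y_{n,K} \;=\; \exp\!\left\{\sum_{k=1}^{K}\frac{\mu_k W_{n,k}-\tfrac12\mu_k^2}{\sigma_k^2}\right\},
\end{equation*}
and then to send $K\to\infty$ after $n\to\infty$. This is the Gaussian analogue of the cycle-expansion argument pioneered by Janson for sparse random graphs and adapted to the dense planted partition problem in more recent work. The entire skeleton of the proof reduces to the bookkeeping of three second-moment quantities under $\bbP_n$.

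The three key quantities to compute (as $\limsup_{n\to\infty}$, for fixed $K$) are $\bbE_{\bbP_n}[Y_n^2]$, $\bbE_{\bbP_n}[\tilde Y_{n,K}^{2}]$, and the cross term $\bbE_{\bbP_n}[Y_n\tilde Y_{n,K}]$. Condition (iv) directly controls the first. For the second, conditions (ii) and (iii) together with the Gaussian moment generating function and a uniform-integrability step give the limit $\exp\{\sum_{k=1}^{K}\mu_k^2/\sigma_k^2\}$. For the third, the change-of-measure identity $\bbE_{\bbP_n}[Y_n g(W_{n,1},\ldots,W_{n,K})]=\bbE_{\bbQ_n}[g(W_{n,1},\ldots,W_{n,K})]$, valid by (i), reduces it to a moment of $\tilde Y_{n,K}$ under $\bbQ_n$, where by (ii)--(iii) the $W_{n,k}$'s converge to Gaussians shifted by $\mu_k$; the Gaussian MGF again yields the limit $\exp\{\sum_{k=1}^{K}\mu_k^2/\sigma_k^2\}$. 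Subtracting, one gets
\begin{equation*}
\limsup_{n\to\infty}\bbE_{\bbP_n}\!\left[(Y_n-\tilde Y_{n,K})^2\right]\;\le\;\exp\!\left\{\sum_{i=1}^{\infty}\frac{\mu_i^2}{\sigma_i^2}\right\}-\exp\!\left\{\sum_{k=1}^{K}\frac{\mu_k^2}{\sigma_k^2}\right\},
\end{equation*}
which vanishes as $K\to\infty$ by (iv).

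From this $L^{2}$ approximation the three conclusions follow almost formally. The weak limit \eqref{eq:lr-limit} is obtained by combining weak convergence of $\tilde Y_{n,K}$ for each fixed $K$ (continuous mapping theorem applied to (ii)--(iii)) with the $L^{2}$ closeness to $Y_n$. Mutual contiguity then comes from Le Cam's first and third lemmas: uniform integrability of $\{Y_n\}$ together with $\bbE_{\bbP_n}[Y_n]=1$ gives $\bbQ_n\triangleleft\bbP_n$, while almost-sure positivity of the limit (it is an exponential) gives the reverse. For the subsequential decomposition \eqref{eq:janson-decomp}, one fixes $\epsilon,\delta>0$, chooses $K$ so that the $L^{2}$ gap above is below $\epsilon^{2}\delta/4$, applies Markov's inequality to deduce high-probability closeness of $\log Y_n$ to $\log\tilde Y_{n,K}$, and extracts a diagonal subsequence to resolve the order of limits. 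The main technical obstacle is the uniform-integrability step used to pass from weak convergence of $(W_{n,1},\ldots,W_{n,K})$ to convergence of the Gaussian MGFs; it is exactly this step that forces the sub-Gaussian tail control imposed on the priors in the concrete applications, and hence determines the critical SNR thresholds below which the proposition can be invoked.
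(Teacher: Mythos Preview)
Your $L^{2}$-expansion scheme is natural, but it has a genuine gap at the step you yourself flag: to get
\[
\lim_{n\to\infty}\Expect_{\bbP_n}\bigl[\tilde Y_{n,K}^{2}\bigr]
\;=\;\exp\Bigl\{\sum_{k=1}^{K}\mu_k^{2}/\sigma_k^{2}\Bigr\}
\quad\text{and}\quad
\lim_{n\to\infty}\Expect_{\bbQ_n}\bigl[\tilde Y_{n,K}\bigr]
\;=\;\exp\Bigl\{\sum_{k=1}^{K}\mu_k^{2}/\sigma_k^{2}\Bigr\},
\]
you need uniform integrability of the exponentials $\exp\{\sum_{k\le K}c_k W_{n,k}\}$ under $\bbP_n$ and $\bbQ_n$. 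Conditions (i)--(iv) say nothing about exponential moments of the $W_{n,k}$; they give only weak convergence and the bound on $\Expect_{\bbP_n}[Y_n^{2}]$. Fatou yields the right lower bounds, but the $\limsup$ direction on $\Expect_{\bbP_n}[\tilde Y_{n,K}^{2}]$ is exactly what you are missing, and without it your displayed $L^{2}$ inequality does not follow. Your last sentence---that this UI step is what ``forces the sub-Gaussian tail control on the priors''---confuses levels: the proposition is abstract and must be provable from (i)--(iv) alone. The sub-Gaussian priors in the applications are used only to verify \eqref{eq:lr-square}, not to supply extra tail control on the $W_{n,k}$ (which in those applications are degree-$2k$ polynomials in Gaussians and need not have the required exponential moments anyway).

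The paper sidesteps this entirely by working at the limit rather than at finite $n$. Tightness of $(Y_n)$ from (iv) gives a subsequential weak limit $L_\star$; along a further subsequence one has joint convergence $(Y_n,W_{n,1},\dots,W_{n,m})\Rightarrow(L_\star,Z_1,\dots,Z_m)$. The change-of-measure identity you wrote, combined with uniform integrability of $Y_n$ itself (which \emph{is} given by (iv)), shows $\Expect[f(Z_1,\dots,Z_m)L_\star]=\Expect[f(Z_1,\dots,Z_m)L^{(m)}]$ for all bounded continuous $f$, i.e.\ $L^{(m)}=\Expect[L_\star\mid Z_1,\dots,Z_m]$. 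Then the projection identity $\Expect[(L_\star-L^{(m)})^{2}]=\Expect[L_\star^{2}]-\Expect[(L^{(m)})^{2}]$ together with Fatou on $\Expect[L_\star^{2}]$ closes the argument with only (i)--(iv). In short, the conditional-expectation structure at the limit replaces the finite-$n$ moment computations that your approach cannot justify from the stated hypotheses.
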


\begin{remark}
The proposition can be viewed as a Gaussian version of Theorem 1 in Janson \cite{janson95} which dealt with convergence to a Poisson mixture. 
In addition, it generalizes Proposition 3.4 in \cite{Ban16} where a more specific version of it appeared with an additional redundant condition ($\bbP_n$ being absolutely continuous with respect to $\bbQ_n$) and without conclusion \eqref{eq:janson-decomp}. 
\end{remark}

\paragraph{Bipartite signed cycles}
In view of Proposition \ref{prop:norcont},
our proofs rely on finding out a class of random variables which are ``asymptotically sufficient" for determining the likelihood ratio. 
To this end, we define the following set of statistics.
\begin{definition}[Bipartite signed cycle of length $2k$]
For each $k\in [n\wedge p]$, we define the bipartite signed cycle of length $2k$ as 
\begin{equation}\label{DEF_BICYCLE}
\BSC_{n,k}= 
\frac{1}{n^{k}} \sum_{i_0, j_0, i_1, j_1, \ldots,i_{k-1}, j_{k-1}} X_{i_0,j_0}X_{i_1,j_0} X_{i_1,j_1} X_{i_2,j_1} \ldots X_{i_{k-1},j_{k-1}} X_{i_0,j_{k-1}}
\end{equation}
 where $i_0,i_1,\ldots,i_{k-1}\in [n]$ are all distinct, and so are $j_0,j_1,\dots,j_{k-1}\in [p]$.
\end{definition} 

As we shall show later, for both models \eqref{eq:bipartite} and \eqref{eq:bipartitevar}, the collection of bipartite signed cycles of increasing lengths determines the asymptotic likelihood ratio, 
at least for a large collection of prior distributions on $\Theta$ and $U$ which we now define.

\paragraph{Sub-Gaussian prior distributions}
We first define sub-Gaussian random vectors and their variance proxies.

\begin{definition}
	\label{def:varproxy}
Suppose $X$ is a random vector of dimension $d$. We say the random vector $X$ is sub-Gaussian with variance proxy $\wt{\Sigma}_{X}$ if $\Expect[X]=0$ and $\Expect[\exp\left( t'X \right)]\leq \exp\left( \frac{1}{2} t' \wt{\Sigma}_{X} t \right)$ for any $t \in \mathbb{R}^{d}$. 
Here $\wt{\Sigma}_{X}$ is a non-negative definite matrix.
\end{definition}

By definition, if $\wt\Sigma_X$ is a variance proxy for $X$, then so is any matrix $\wt\Sigma$ such that $\wt\Sigma - \wt\Sigma_X$ is non-negative definite.
For any multivariate normal distribution the variance proxy matches with the true covariance matrix.
If $X$ is a random vector with i.i.d.~Rademacher entries then $X$ is sub-Gaussian with variance proxy $I_{d}$. 
Furthermore, if $X$ is sub-Gaussian with variance proxy $\wt{\Sigma}_{X}$ then $AX$ is sub-Gaussian with variance proxy $A'\wt\Sigma_X A$ for any $A$. 
Finally, if $X$ is a random variable taking values within $[a,b]$, then $X-\Expect[X]$ is sub-Gaussian with variance proxy $\frac{1}{4}\left({a-b}\right)^2$.

\begin{definition}[Sub-Gaussian prior]
\label{def:prior}
For any given number $\kappa < \min(n,p)$, let $\calP(n,\kappa, \Sigma_\Theta, \wt\Sigma_\Theta)$ be the collection of all priors $\pi_\Theta$ on $\Theta$ such that under $\pi_\Theta$, the row vectors $\{\Theta_{i*}:i\in [n] \}$ are i.i.d.~sub-Gaussian random vectors in $\reals^\kappa$ with mean zero, covariance matrix $\Sigma_\Theta$ and variance proxy $\wt\Sigma_\Theta$.
Let $\calP(p,\kappa, \Sigma_U, \wt\Sigma_U)$ be defined analogously for $U$.
\end{definition}


\subsection{Statement of theorems}

We first state the theorem for testing problem \eqref{eq:bipartite}.
For any matrix $A$, let $\|A\|_2$ be its spectral norm.
For any event $E$, let $\mathbf{1}_E$ be its indicator function.

\begin{theorem}\label{thm:main}
Consider the testing problem defined in (\ref{eq:bipartite}) with the $\Theta$ prior $\pi_\Theta\in \calP(n,\kappa,\Sigma_\Theta,\wt{\Sigma}_{\Theta})$ 
and the $U$ prior 
$\pi_U\in \calP(p,\kappa,\Sigma_U, \wt{\Sigma}_{U})$.
Denote the null distribution by $\bbP_{0,n}$, the alternative distribution $\bbP_{1,n}$ and the likelihood ratio $L_{n}=  \frac{\diff\mathbb{P}_{1,n}}{\diff\mathbb{P}_{0,n}}$. 
Suppose as $n\to\infty$, $p/n\to\gamma\in (0,\infty)$ while $\kappa,\Sigma_{\Theta},\Sigma_{U},\wt{\Sigma}_{\Theta},\wt\Sigma_{U}$ remain fixed. The following hold whenever $\|\wt{\Sigma}_{\Theta}\wt{\Sigma}_{U} \|_{2} \|\Sigma_{\Theta}\Sigma_{U} \|_{2} < \gamma$.
\begin{enumerate}
\item $\mathbb{P}_{0,n}$ and $\mathbb{P}_{1,n}$ are asymptotically mutually contiguous.
\item Under $H_0$,
\begin{equation}
\label{eq:liklim}
L_{n} \stackrel{d}{\to} \exp\left\{ \sum_{k=1}^{\infty} \frac{2\mu_{k}Z_{k}-\mu_{k}^{2}}{4k\gamma^{k}} \right\}
\end{equation}
where $Z_{k}$ are independent $N\left(0, 2k \gamma^{k}\right)$ random variables and for any $k$, $\mu_{k}= \Tr\left((\Sigma_{\Theta}\Sigma_{U})^k\right)$.
In other words, under $H_0$, $\log(L_n)\stackrel{d}{\to} N(-\frac{1}{2}\sigma_b^2, \sigma_b^2)$ with 
\begin{equation}
	\label{eq:sigma0}
	\sigma_b^2 = \sum_{k=1}^{\infty} \frac{\mu_{k}^{2}}{2k\gamma^{k}} = \frac{1}{2}\sum_{i=1}^\kappa \sum_{j=1}^\kappa \log\left(1 - \frac{h_ih_j}{\gamma}\right)
\end{equation}
where $h_1\geq\cdots\geq h_\kappa$ are the eigenvalues of $\Sigma_\Theta\Sigma_U$.
Under $H_1$, we have \eqref{eq:liklim} with $Z_k\stackrel{ind}{\sim} N(\mu_{k}, 2k\gamma^k)$ and $\log(L_n)\stackrel{d}{\to} N(\frac{1}{2}\sigma_b^2, \sigma_b^2)$.
\item Further under both null and alternative the log-likelihood ratio satisfies the following ANOVA type decomposition:
\begin{equation}
	\label{eq:anova}
\log(L_{n})- \sum_{k=1}^{m_{n}} \frac{2\mu_{k}\left(\BSC_{n,k}-p\Indc_{k=1}\right)-\mu_{k}^2}{4k\gamma^{k}} \stackrel{p}{\to} 0
\end{equation}
where $m_{n}$ is any sequence growing to $\infty$ at a rate $o(\sqrt{\log{n}})$ .
\end{enumerate}
\end{theorem}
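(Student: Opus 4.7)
The plan is to deduce Theorem~\ref{thm:main} from Proposition~\ref{prop:norcont} applied with the choices $W_{n,k}=\BSC_{n,k}-p\,\Indc_{k=1}$, $\sigma_k^2=2k\gamma^k$, and $\mu_k=\Tr((\Sigma_\Theta\Sigma_U)^k)$ for $k\geq 1$. Under these identifications the weak limit~\eqref{eq:lr-limit} matches~\eqref{eq:liklim} exactly, the variance formula~\eqref{eq:sigma0} is the evaluation $\sum_k\mu_k^2/\sigma_k^2$, and the $H_1$ limit of $\log L_n$ follows from Le Cam's third lemma (equivalently, from running Proposition~\ref{prop:norcont} with $\bbP_n,\bbQ_n$ interchanged). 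The centering $p\,\Indc_{k=1}$ is forced because, by~\eqref{DEF_BICYCLE}, $\BSC_{n,1}=n^{-1}\sum_{i,j}X_{ij}^2$ has mean $p$ under $\bbP_{0,n}$, whereas $\BSC_{n,k}$ for $k\geq 2$ already has zero mean thanks to the distinctness constraints on its summation indices. Condition~(i) of Proposition~\ref{prop:norcont} is immediate since under $\bbP_{1,n}$ the law of $X$ is a mixture of non-degenerate Gaussian densities, hence strictly positive relative to $\bbP_{0,n}$. Conditions~(ii)--(iii), joint asymptotic normality of $(W_{n,1},\dots,W_{n,K})$ under both measures with the stated parameters, are the content of Section~\ref{sec:cycle}: under $\bbP_{0,n}$ each $W_{n,k}$ is a centered homogeneous polynomial of degree $2k$ in i.i.d.\ standard Gaussians, so a combinatorial enumeration of the matchings of its $2k$ edges together with Wick's formula identifies the limits as independent $N(0,2k\gamma^k)$; under $\bbP_{1,n}$ one conditions on $(\Theta,U)$, reruns the Gaussian argument on the residual noise, and then averages over the priors to extract the deterministic drift $\mu_k$.

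The principal obstacle is condition~(iv), the uniform bound $\limsup_n\Expect_{\bbP_{0,n}}[L_n^2]\leq\exp(\sigma_b^2)$. Writing
\begin{equation*}
L_n(X)=\Expect_{\pi_\Theta\otimes\pi_U}\!\left[\exp\Bigl(\tfrac{1}{\sqrt{p}}\,\langle X,\Theta U'\rangle-\tfrac{1}{2p}\|\Theta U'\|_{\rm F}^2\Bigr)\right],
\end{equation*}
introducing an independent replica $(\Theta^{(2)},U^{(2)})$ of $(\Theta^{(1)},U^{(1)})$, and integrating out the $N(0,1)$ entries of $X$ produces the clean identity
\begin{equation*}
\Expect_{\bbP_{0,n}}[L_n^2]=\Expect\!\left[\exp\Bigl(\tfrac{1}{p}\Tr\bigl(\Theta^{(1)\prime}\Theta^{(2)}\,U^{(2)\prime}U^{(1)}\bigr)\Bigr)\right].
\end{equation*}
I would then integrate out the four prior matrices row by row: because each row of $\Theta^{(a)}$ (resp.~$U^{(a)}$) is sub-Gaussian with variance proxy $\wt\Sigma_\Theta$ (resp.~$\wt\Sigma_U$), the exponential moment against any linear form in a single row is controlled by the proxies, whereas expectations of the surviving $\kappa\times\kappa$ Gram-type matrices $\Theta^{(1)\prime}\Theta^{(2)}/n$ and $U^{(1)\prime}U^{(2)}/p$ concentrate at the orders dictated by the true covariances $\Sigma_\Theta$ and $\Sigma_U$. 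Unwinding the resulting determinant via the identity $-\log\det(I-M)=\sum_{k\geq 1}\Tr(M^k)/k$ produces $\exp(\sigma_b^2+o(1))$ precisely when $\|\wt\Sigma_\Theta\wt\Sigma_U\|_2\,\|\Sigma_\Theta\Sigma_U\|_2<\gamma$, which is exactly the role played by the spectral hypothesis. Interleaving the four integrations so that the two different norms (variance proxies versus true covariances) appear in the right places while the constant $\sigma_b^2$ emerges with the correct geometric coefficients is the main bookkeeping burden, and I would relegate the detailed verification to the appendix.

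Once (i)--(iv) are in hand, parts~(1) and~(2) of the theorem drop out of Proposition~\ref{prop:norcont}. For part~(3), the proposition furnishes the decomposition with a \emph{fixed} cut-off $K=K(\epsilon,\delta)$ along subsequences; the standard subsequence-of-subsequences argument upgrades this to $\log L_n-\sum_{k=1}^{K}\frac{2\mu_k W_{n,k}-\mu_k^2}{4k\gamma^k}\stackrel{p}{\to}0$ along the full sequence. To reach $m_n=o(\sqrt{\log n})$ I would bound the residual
\begin{equation*}
R_{n,K}=\sum_{k=K+1}^{m_n}\frac{2\mu_k\,(\BSC_{n,k}-p\,\Indc_{k=1})-\mu_k^2}{4k\gamma^k}
\end{equation*}
by Chebyshev, using $\mu_k\leq\kappa\,\|\Sigma_\Theta\Sigma_U\|_2^k$ together with the variance estimate $\Var_{\bbP_{0,n}}(\BSC_{n,k})=2k\gamma^k(1+o(1))$ that will be obtained in Section~\ref{sec:cycle}; this collapses the tail to the geometrically convergent series $\sum_{k>K}\mu_k^2/(k\gamma^k)$, which is summable under the spectral hypothesis. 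The growth budget $m_n=o(\sqrt{\log n})$ is precisely what is needed to keep the higher-order combinatorial corrections hidden inside that variance estimate uniformly negligible when $k$ is itself allowed to drift with $n$, and this step is where I expect additional care to be required beyond the routine subsequence extraction.
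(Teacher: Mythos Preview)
Your overall architecture---apply Proposition~\ref{prop:norcont} with $W_{n,k}=B_{n,k}-p\Indc_{k=1}$, verify (ii)--(iii) via the signed-cycle CLTs of Section~\ref{sec:cycle}, and upgrade the subsequence statement \eqref{eq:janson-decomp} to the full sequence for part~3---matches the paper. The gap is in your treatment of condition~(iv).

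You propose to bound $\Expect_{\bbP_{0,n}}[L_n^2]$ directly. In general this quantity is infinite for every finite $n$. Take $\kappa=1$ with standard Gaussian priors: after integrating out $\Theta^{(2)}$ and then $U^{(2)}$ one is left with $\Expect[(1-p^{-2}\|\Theta^{(1)}\|^2\|U^{(1)}\|^2)^{-1/2}\Indc_{\cdots}]$, which blows up because $\|\Theta^{(1)}\|^2\|U^{(1)}\|^2>p^2$ with positive probability. So Proposition~\ref{prop:norcont} cannot be applied to $Y_n=L_n$ as stated. The paper's remedy is to introduce the conditioning event $\Omega_n$ on which the \emph{self}-Gram matrices $\tfrac1n\Theta^{(1)\prime}\Theta^{(1)}$ and $\tfrac1p U^{(1)\prime}U^{(1)}$ are within $\delta_n$ of $\Sigma_\Theta,\Sigma_U$, define the truncated ratio $\tilde L_n=\Expect[L_n^f\Indc_{\Omega_n}\mid\calG_n]$, verify (iv) for $\tilde L_n$, apply Proposition~\ref{prop:norcont} to $(\bbP_{0,n},\tilde\bbQ_n)$, and finally show $L_n-\tilde L_n\to 0$ in $\bbP_{0,n}$-probability via the total-variation bound \eqref{unif_meas}. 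None of this truncation machinery appears in your sketch.

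Two further points on the same step. First, the matrices that concentrate to $\Sigma_\Theta,\Sigma_U$ are the self-Grams $\Theta^{(a)\prime}\Theta^{(a)}/n$, not the cross-Grams $\Theta^{(1)\prime}\Theta^{(2)}/n$ you name; the latter are $O_P(n^{-1/2})$ and supply the Gaussian fluctuations in the limit. Second, ``integrate out row by row'' using the sub-Gaussian MGF bound works once (on $\Theta^{(2)}$, say), but leaves a \emph{quadratic} form in $U^{(2)}$, for which the linear-form MGF bound is useless; the paper invokes a Hanson--Wright type tail (Theorem~1 of \cite{hsu12}) here, and it is precisely the conditioning on $\Omega_n$ that lets one replace the random coefficient matrix $\tilde\Sigma_\Theta\otimes\tfrac1n\Theta^{(1)\prime}\Theta^{(1)}$ by $\tilde\Sigma_\Theta\otimes\Sigma_\Theta$ and thereby obtain the correct product $\|\tilde\Sigma_\Theta\tilde\Sigma_U\|_2\|\Sigma_\Theta\Sigma_U\|_2$ in the threshold.
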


Next we state the counterpart of Theorem \ref{thm:main} for the testing problem in (\ref{eq:bipartitevar}).
\begin{theorem}
	\label{thm:mainvar}
Consider the testing problem defined in (\ref{eq:bipartitevar}).
Denote the null distribution by $\bbP_{0,n}$, the alternative distribution $\bbP_{1,n}$ and the likelihood ratio $L_{n}=  \frac{\diff\mathbb{P}_{1,n}}{\diff\mathbb{P}_{0,n}}$. 
Under the condition of Theorem \ref{thm:main}, whenever 
\begin{equation*}
\|\Sigma_{U}^{-1/2}\wt{\Sigma}_{U}\Sigma_{U}^{-1/2}\wt{\Sigma}_{\Theta} \|_{2}
\| \Sigma_{\Theta} \|_{2} < \gamma,
\end{equation*}
the three conclusions of Theorem \ref{thm:main} hold with $\mu_k = \Tr(\Sigma_\Theta^k)$ and $h_1\geq\cdots\geq h_\kappa$ the eigenvalues of $\Sigma_\Theta$.
\end{theorem}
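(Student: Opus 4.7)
The plan is to verify the four hypotheses of Proposition \ref{prop:norcont} in exact analogy with the proof of Theorem \ref{thm:main}, taking $W_{n,k}=B_{n,k}-p\mathbf{1}_{k=1}$ as the guiding statistics. Since under $H_0$ the two models agree (both reduce to $X=Z$), the joint asymptotic normality of $(B_{n,1},\dots,B_{n,K})$ with variances $2k\gamma^k$ established in Section \ref{sec:cycle} is inherited verbatim. Only the $H_1$ mean of $B_{n,k}$ and the bound on $\mathbb{E}_{H_0}[L_n^2]$ need fresh computations; everything else in the argument is structurally identical.

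For the $H_1$ mean, I would substitute $X_{ij}=(\Theta V')_{ij}+Z_{ij}$ into \eqref{DEF_BICYCLE} and expand. Because the $2k$ index pairs appearing in a single bipartite cycle are all distinct, no noise factor can be self-paired, so only the all-signal product survives in expectation. Expanding $(\Theta V')_{ij}=\sum_\ell\Theta_{i\ell}V_{j\ell}$ and summing $j_0,\dots,j_{k-1}$ over $[p]$ produces $\prod_m (V'V)_{\ell_m,\ell'_m}=\prod_m\delta_{\ell_m\ell'_m}$ by the Stiefel identity $V'V=I_\kappa$, with only an $O(\kappa^2/p)$ error from the distinctness restriction. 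Independence of the rows of $\Theta$ together with the cycle structure then yields $\mathbb{E}[\prod_r \Theta_{i_r,\ell_{r-1}}\Theta_{i_r,\ell_r}]=\prod_r\Sigma_\Theta(\ell_{r-1},\ell_r)$ for fixed distinct $i_r$'s, and the subsequent summation over $\ell_0,\dots,\ell_{k-1}$ returns $\Tr(\Sigma_\Theta^k)=\mu_k$. The Stiefel identity is thus exactly the mechanism that converts the $\Tr((\Sigma_\Theta\Sigma_U)^k)$ of Theorem \ref{thm:main} into $\Tr(\Sigma_\Theta^k)$ here. The asymptotic variance $2k\gamma^k$ and joint Gaussianity under $H_1$ then follow from the same noise-dominated second-moment argument as under $H_0$.

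For the second-moment bound, the Stiefel identity further gives $\|\Theta V'\|_F^2=\Tr(\Theta'\Theta)$. Squaring $L_n$, integrating $X\sim N(0,I)$ under $H_0$, and cancelling the quadratic terms yields
\begin{equation*}
\mathbb{E}_{H_0}[L_n^2] = \mathbb{E}\bigl[\exp\bigl(\Tr(\Theta^{(1)'}\Theta^{(2)}(V^{(2)})'V^{(1)})\bigr)\bigr]
\end{equation*}
for two independent copies $(\Theta^{(s)},U^{(s)})$ of the prior, $s=1,2$. The main obstacle is that the rows of $V^{(s)}=U^{(s)}((U^{(s)})'U^{(s)})^{-1/2}$ are not independent, so one cannot integrate over $V^{(s)}$ row-by-row via sub-Gaussianity as was done in Theorem \ref{thm:main}. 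I would resolve this by truncating on the high-probability event $\mathcal{E}$ where $(U^{(s)})'U^{(s)}/p$ is close to $\Sigma_U$, which follows from standard sub-Gaussian concentration of i.i.d.\ row sums, and then Taylor-expanding $((U^{(s)})'U^{(s)})^{-1/2}$ about $p^{-1/2}\Sigma_U^{-1/2}$. This replaces $(V^{(2)})'V^{(1)}$ by $p^{-1}\Sigma_U^{-1/2}(U^{(2)})'U^{(1)}\Sigma_U^{-1/2}$ up to negligible error, reducing the problem to the one in Theorem \ref{thm:main} with rescaled $\tilde\Theta^{(s)}=\Theta^{(s)}\Sigma_U^{-1/2}$ whose rows are sub-Gaussian with covariance $\Sigma_U^{-1/2}\Sigma_\Theta\Sigma_U^{-1/2}$ and variance proxy $\Sigma_U^{-1/2}\wt\Sigma_\Theta\Sigma_U^{-1/2}$; the SNR condition stated in Theorem \ref{thm:mainvar} emerges directly from applying the second-moment inequality of Theorem \ref{thm:main} in this reduced form. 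The contribution of $\mathcal{E}^c$ is $o(1)$ after a crude sub-Gaussian a priori bound on $L_n^2$. With Proposition \ref{prop:norcont} thereby verified, the contiguity, normal limits, and ANOVA decomposition \eqref{eq:anova} all follow at once.
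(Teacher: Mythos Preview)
Your overall strategy matches the paper's: the paper also dispatches Theorem~\ref{thm:mainvar} by conditioning on a high-probability event where $\sqrt{p}\,(U'U)^{-1/2}$ is uniformly close to $\Sigma_U^{-1/2}$ and then rerunning the machinery of Theorem~\ref{thm:main}. Your direct use of $V'V=I_\kappa$ to read off $\mu_k=\Tr(\Sigma_\Theta^k)$ is correct and in fact slightly cleaner than routing that computation through the approximation.

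There is, however, a genuine slip in the second-moment reduction. You absorb $\Sigma_U^{-1/2}$ into $\Theta$, setting $\tilde\Theta=\Theta\Sigma_U^{-1/2}$ while leaving $U$ unchanged. Plugging the resulting parameters $\Sigma_{\tilde\Theta}=\Sigma_U^{-1/2}\Sigma_\Theta\Sigma_U^{-1/2}$, $\wt\Sigma_{\tilde\Theta}=\Sigma_U^{-1/2}\wt\Sigma_\Theta\Sigma_U^{-1/2}$, $\Sigma_U$, $\wt\Sigma_U$ into the condition of Theorem~\ref{thm:main} produces
\[
\big\|\Sigma_U^{-1/2}\wt\Sigma_\Theta\Sigma_U^{-1/2}\wt\Sigma_U\big\|_2\,\big\|\Sigma_U^{-1/2}\Sigma_\Theta\Sigma_U^{1/2}\big\|_2<\gamma,
\]
which is \emph{not} the condition stated in Theorem~\ref{thm:mainvar}; the two are inequivalent when $\Sigma_U$ and $\Sigma_\Theta$ fail to commute. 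The paper instead (as hinted right after the statement of Theorem~\ref{thm:mainvar}) absorbs $\Sigma_U^{-1/2}$ into $U$, i.e.\ sets $\tilde U=U\Sigma_U^{-1/2}$ so that $\Sigma_{\tilde U}=I_\kappa$ and $\wt\Sigma_{\tilde U}=\Sigma_U^{-1/2}\wt\Sigma_U\Sigma_U^{-1/2}$; Theorem~\ref{thm:main}'s condition then reads $\|\wt\Sigma_\Theta\,\Sigma_U^{-1/2}\wt\Sigma_U\Sigma_U^{-1/2}\|_2\,\|\Sigma_\Theta\|_2<\gamma$, which matches the stated hypothesis after transposition. So the fix is a one-line change of bookkeeping, but as written your claim that ``the SNR condition stated in Theorem~\ref{thm:mainvar} emerges directly'' is not correct.

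A smaller point: controlling the contribution of $\mathcal{E}^c$ to $\Expect_{H_0}[L_n^2]$ by ``a crude sub-Gaussian a priori bound on $L_n^2$'' is not how the paper proceeds, and it is not obvious such an a priori bound is available (it would amount to a fourth-moment bound on $L_n$). The paper avoids this entirely by defining the truncated likelihood $\wt L_n=\Expect[L_n^f\Indc_{\Omega_n}\mid\mathcal{G}_n]$, applying Proposition~\ref{prop:norcont} to $\wt L_n/\Prob[\Omega_n]$ (for which only the truncated second moment is needed), and then showing $L_n-\wt L_n\to 0$ in probability via the elementary identity $\Expect_{\bbP_{0,n}}[L_n-\wt L_n]=\Prob[\Omega_n^c]$.
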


It is not surprising that the results for 
the testing problems (\ref{eq:bipartite}) and (\ref{eq:bipartitevar}) are essentially the same. 
By law of large number, one has $\|\sqrt{p}(UU')^{-{1}/{2}}- \Sigma_{U}^{-{1}/{2}}\|_{2} \stackrel{p}{\to} 0$. 
Hence $(UU')^{-{1}/{2}}$ is essentially same as $\frac{1}{\sqrt{p}} \Sigma_{U}^{-{1}/{2}}$ for large values of $p$. 
In addition, the distribution of $X$ in \eqref{eq:bipartitevar} remains unchanged if we replace $U$ with $U\Sigma_U^{-1/2}$.
Hence the testing problem is essentially the same as the unnormalized version (\ref{eq:bipartite}) with $\Sigma_{U}= I_{\kappa}$.

\paragraph{Sphericity test against multi-spiked Wishart covariance matrix}

Suppose $\pi^0_\Theta$ assigns i.i.d.~$N_\kappa(0, I_\kappa)$ rows vectors in $\Theta$ and $\pi^0_U$ does the same on $U$.
Then $V = U(U'U)^{-1/2}$ follows the uniform distribution on the Stiefel manifold $O(p,\kappa)$, and \eqref{eq:bipartitevar} reduces to the high-dimensional sphericity testing problem considered in \cite{onatski14}, 
since in this case, the full data likelihood ratio reduces to the likelihood ratio of the eigenvalues of the sample covariance matrix.
Since $\pi^0_\Theta\in \calP(n,\kappa,I_\kappa, I_\kappa)$ and $\pi^0_U\in \calP(p,\kappa,I_\kappa, I_\kappa)$, we obtain the following corollary of Theorem \ref{thm:mainvar} which reconstructs the key result in \cite{onatski14} for normal data.
However, the proof approach we take is completely different from that used in \cite{onatski14}.
 
\begin{cor}\label{cor:omh}
Let $X_1,\dots, X_n \stackrel{iid}{\sim} N_p(0, \Sigma)$.
Consider testing $H_0: \Sigma = I_p$ vs.~$H_1: \Sigma = I_p + V'HV$ where $H = \mathrm{diag}(h_1,\dots, h_\kappa)$ with $h_1\geq \cdots \geq h_\kappa > 0$ and $V$ follows the uniform distribution on $O(p,\kappa)$. 
Denote the null distribution by $\bbP_{0,n}$, the alternative distribution $\bbP_{1,n}$ and the likelihood ratio $L_{n}=  \frac{\diff\mathbb{P}_{1,n}}{\diff\mathbb{P}_{0,n}}$. 
Suppose as $n\to\infty$, $p/n\to\gamma\in (0,\infty)$ while $\kappa$ remains fixed. 
The following hold whenever $h_1 < \sqrt{\gamma}$.
\begin{enumerate}
\item $\mathbb{P}_{0,n}$ and $\mathbb{P}_{1,n}$ are asymptotically mutually contiguous.
\item Under $H_0$, \eqref{eq:liklim} holds
where $Z_{k}$ are independent $N\left(0, 2k \gamma^{k}\right)$ random variables and for any $k$, $\mu_{k}= \Tr\left(H^k\right)$.
In other words, under $H_0$, $\log(L_n)\stackrel{d}{\to} N(-\frac{1}{2}\sigma_b^2, \sigma_b^2)$ with 
\begin{equation*}
	\sigma_b^2 
	= \frac{1}{2}\sum_{i=1}^\kappa \sum_{j=1}^\kappa \log\left(1 - \frac{h_ih_j}{\gamma}\right).
\end{equation*}
Under $H_1$, we have \eqref{eq:liklim} with $Z_k\stackrel{ind}{\sim} N(\mu_{k}, 2k\gamma^k)$ and $\log(L_n)\stackrel{d}{\to} N(\frac{1}{2}\sigma_b^2, \sigma_b^2)$.
\item Further under both null and alternative the log-likelihood ratio satisfies \eqref{eq:anova}
where $m_{n}$ is any sequence growing to $\infty$ at a rate $o(\sqrt{\log{n}})$ .
\end{enumerate}
 \end{cor}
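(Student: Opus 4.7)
The plan is to obtain Corollary \ref{cor:omh} as an immediate consequence of Theorem \ref{thm:mainvar} under a particular choice of priors. I would take $\pi_\Theta$ to be the law that assigns i.i.d.\ $N_\kappa(0,H)$ rows to $\Theta$ and $\pi_U$ to be the law that assigns i.i.d.\ $N_\kappa(0,I_\kappa)$ rows to $U$. Since any centered multivariate Gaussian is sub-Gaussian with variance proxy equal to its covariance, this places $\pi_\Theta \in \calP(n,\kappa,H,H)$ and $\pi_U \in \calP(p,\kappa,I_\kappa,I_\kappa)$, so the prior hypotheses of Theorem \ref{thm:mainvar} are in force.

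Next I would identify the resulting model with the sphericity test. When $U$ has i.i.d.\ standard Gaussian rows in $\reals^\kappa$, its distribution is invariant under right multiplication by any orthogonal matrix, hence by the standard polar-decomposition argument $V=U(U'U)^{-1/2}$ is distributed as the uniform (Haar) measure on the Stiefel manifold $O(p,\kappa)$. Conditional on $V$, the rows of $X = \Theta V' + Z$ are i.i.d.\ $N_p(0,\, V H V' + I_p)$, since $\Theta V'$ has Gaussian rows with covariance $V \Sigma_\Theta V' = V H V'$. Integrating over $V$ therefore produces exactly the marginal law of $(X_1,\dots,X_n)$ under $H_1$ in the corollary, so the likelihood ratios $L_n$ in Theorem \ref{thm:mainvar} and in Corollary \ref{cor:omh} coincide.

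Finally I would translate the conclusions. With $\Sigma_U = \wt\Sigma_U = I_\kappa$ and $\Sigma_\Theta = \wt\Sigma_\Theta = H$, the SNR condition
\[
\|\Sigma_U^{-1/2}\wt\Sigma_U \Sigma_U^{-1/2}\wt\Sigma_\Theta\|_2\,\|\Sigma_\Theta\|_2 < \gamma
\]
of Theorem \ref{thm:mainvar} simplifies to $\|H\|_2^2 = h_1^2 < \gamma$, i.e.\ $h_1<\sqrt{\gamma}$, which is exactly the hypothesis of the corollary. The theorem also gives $\mu_k = \Tr(\Sigma_\Theta^k) = \Tr(H^k)$ and identifies $h_1,\dots,h_\kappa$ as the eigenvalues of $\Sigma_\Theta = H$, so the three conclusions of Theorem \ref{thm:mainvar} transport verbatim to yield contiguity, the normal limits \eqref{eq:liklim} for $\log L_n$ with the advertised variance $\sigma_b^2$, and the ANOVA decomposition \eqref{eq:anova} in terms of bipartite signed cycles. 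The proof is essentially bookkeeping; the only conceptual step worth flagging is the identification $V\sim \mathrm{Unif}(O(p,\kappa))$ via the polar factor, which is what matches the model \eqref{eq:bipartitevar} to the classical spiked-covariance sphericity testing problem and allows Theorem \ref{thm:mainvar} to be invoked without modification.
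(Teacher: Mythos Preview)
Your proposal is correct and follows essentially the same route as the paper: derive the corollary from Theorem~\ref{thm:mainvar} by choosing Gaussian priors so that $V=U(U'U)^{-1/2}$ is Haar on $O(p,\kappa)$, then read off the SNR condition and the values of $\mu_k$. The only minor difference is that you (correctly) take $\Sigma_\Theta=\wt\Sigma_\Theta=H$ so that the conditional covariance is $VHV'+I_p$ and $\mu_k=\Tr(H^k)$, whereas the paper's preamble to the corollary writes $\pi_\Theta^0$ with $N_\kappa(0,I_\kappa)$ rows---your choice is what is actually needed to match the statement (and is consistent with the paper's own model description of \eqref{eq:bipartitevar}).
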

 
\begin{remark}
Since our main theorems depend on the priors only through covariance matrices and sub-Gaussian variance proxies, Corollary \ref{cor:omh} holds for any prior $\pi_U\in \calP(p,\kappa, I_\kappa, I_\kappa)$, e.g., the prior that assigns entries of $U$ with i.i.d.~Rademacher random variables.
\end{remark}

\section{Asymptotic normality of bipartite signed cycles}
\label{sec:cycle}

We now give the limiting distribution of the statistics $B_{n,k}$'s defined in (\ref{DEF_BICYCLE}). 
These statistics serve the purpose of the $W_{n,i}$'s in Proposition \ref{prop:norcont} when it comes to proving Theorems \ref{thm:main} and \ref{thm:mainvar}.
 \begin{prop}[Bipartite signed cycles]\label{prop:bicyclelim}
Consider both testing problems \eqref{eq:bipartite} and \eqref{eq:bipartitevar}.
The following results hold:
 \begin{enumerate}[(i)]
 \item Under $H_0$, when $1\le  k_1 < \ldots< k_l=o(\sqrt{\log n})$, 
 \begin{equation}
 \left(\frac{B_{n,k_1}- p\Indc_{k_{1}=1}}{\sqrt{2k_1\gamma^{k_1}}},\ldots, \frac{B_{n,k_l}}{\sqrt{2k_l\gamma^{k_l}}}\right) \stackrel{d}{\to} N_l(0,I_l).
 \end{equation}
 \item Under $H_1$, 
when $1\le k_1 < \ldots< k_l=o(\sqrt{\log n})$,
 \begin{equation} \left(\frac{B_{n,k_1}-p\Indc_{k_{1}=1}-\mu_{k_1}}{\sqrt{2k_1\gamma^{k_1}}},\ldots, \frac{B_{n,k_l}-\mu_{k_l}}{\sqrt{2k_l\gamma^{k_l}}}\right)
 \stackrel{d}{\to} N_l(0,I_l),
 \end{equation}
 where for testing problem \eqref{eq:bipartite},
\begin{equation}
	\label{eq:mu-k-b}
\begin{aligned}
\mu_{k} & = \sum_{l_{1}\ldots,l_{2k}\in \{1,\ldots,\kappa\}^{2k}} \Sigma_{\Theta}(l_{1},l_{2k})\Sigma_{U}(l_{1},l_{2})\Sigma_{\Theta}(l_{2},l_{3})\ldots \Sigma_{U}(l_{2k-1},l_{2k})\\
& =\Tr\big((\Sigma_{\Theta}\Sigma_{U})^k\big),	
\end{aligned}
\end{equation}
and for testing problem \eqref{eq:bipartitevar}
\begin{equation}
\mu_k = \Tr(\Sigma_\Theta^k).
\end{equation}
 \end{enumerate}
 \end{prop}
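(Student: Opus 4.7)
}

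The plan is to establish joint asymptotic normality by the method of moments, showing that for every fixed tuple of mixed moments, the corresponding mixed cumulants of the rescaled vector $\bigl(\frac{B_{n,k_i}-\mathbb{E}_{H_s} B_{n,k_i}}{\sqrt{2k_i\gamma^{k_i}}}\bigr)_{i=1}^l$ under $H_s$ ($s=0,1$) converge to those of a standard Gaussian vector. Because the cycle lengths are allowed to diverge at rate $o(\sqrt{\log n})$, the combinatorial bookkeeping must be quantitative: the combinatorial factors of the form $(2k_i)^{c k_i}$ that arise from enumerating ``bad'' index configurations must be beaten by powers of $n^{-1}$, which works precisely when $\sum_i k_i^2 = o(\log n)$. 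This uniform control is the core technical challenge and mirrors the approach of \cite{Ban16} for signed cycles on dense \ER graphs.

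Start with the null case. Since under $H_0$ the entries of $X$ are i.i.d.\ standard Gaussians and the $2k$ edges traversed in \eqref{DEF_BICYCLE} are distinct whenever $k\ge 2$ (all $i_a$'s and all $j_b$'s are required distinct, and the bipartite cycle visits each edge once), one gets $\mathbb{E}_{H_0}[B_{n,k}]=0$ for $k\ge 2$ and $\mathbb{E}_{H_0}[B_{n,1}]=p$. For the variance, expand $\mathbb{E}_{H_0}[B_{n,k}^2]$ as a sum over ordered pairs of bipartite cycles and use Gaussian independence: a nonvanishing contribution requires the two edge sets to coincide exactly. Counting the bipartition-preserving automorphisms of a $2k$-cycle (there are $2k$ of them) and noting that such matched pairs contribute $n^k p^k \cdot 2k$ configurations, each with weight $n^{-2k}$, yields $\mathrm{Var}_{H_0}(B_{n,k})\to 2k\gamma^k$. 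The same pairing argument, applied to mixed moments, forces cross-terms between $B_{n,k_i}$ and $B_{n,k_j}$ with $k_i\ne k_j$ to vanish (no edge-set matching is possible), and shows that the $2m$-th moment of a single rescaled $B_{n,k}$ picks out only perfect pairings, producing $(2m-1)!!$ and thus Gaussian limits. To accommodate $k=o(\sqrt{\log n})$, the contributions from ``bad'' configurations where the two cycles partially overlap must be bounded by $\mathrm{poly}(k)^k\cdot n^{-1}$, which vanishes under the growth restriction.

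Under $H_1$ for model \eqref{eq:bipartite}, write $X_{ij}=Y_{ij}+Z_{ij}$ with $Y_{ij}=\frac{1}{\sqrt{p}}\sum_l \Theta_{il}U_{jl}$, and expand each monomial $\prod_a X_{e_a}$ as a sum over subsets $S\subseteq [2k]$ of $\prod_{a\in S}Y_{e_a}\prod_{a\notin S}Z_{e_a}$. Independence of $Y$ and $Z$ and vanishing means of the unpaired $Z$-factors show that only $S=[2k]$ and configurations where the $Z$-edges are internally paired contribute. The $S=[2k]$ term produces the mean: using i.i.d.\ rows of $\Theta$ and $U$ together with the structure of the bipartite cycle (each $i_a$ and each $j_b$ is incident to exactly two edges), the conditional expectation factorizes into a product of entries of $\Sigma_\Theta$ and $\Sigma_U$ that collapses by the cyclic property of trace to $p^{-k}\,\mathrm{Tr}((\Sigma_\Theta\Sigma_U)^k)$; multiplying by the $\sim n^k p^k$ index choices produces $\mu_k=\mathrm{Tr}((\Sigma_\Theta\Sigma_U)^k)$. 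Terms with $S\subsetneq [2k]$ either have $|S|$ odd (hence vanish by symmetry of the sub-Gaussian prior when $|S|$ equals $1$, or can be bounded using the variance proxies in Definition \ref{def:varproxy}) or have strictly fewer than $2k$ free indices, and so are lower order when $\|\wt\Sigma_\Theta\wt\Sigma_U\|_2\|\Sigma_\Theta\Sigma_U\|_2<\gamma$. A parallel argument on the second moment of $B_{n,k}-\mu_k$ shows that the dominant contribution still comes from the all-$Z$ pairing, so the variance under $H_1$ is asymptotically the same $2k\gamma^k$; the joint asymptotic normality then follows from the same pairing-based combinatorics carried out jointly in $k_1,\dots,k_l$, with the sub-Gaussian assumption used to bound all remaining cross-terms uniformly in $k=o(\sqrt{\log n})$.

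For the normalized problem \eqref{eq:bipartitevar} the reduction is routine once the unnormalized case is in hand. Since $V=U(U'U)^{-1/2}$ and $\frac{1}{p}U'U\to \Sigma_U$ in probability, $\sqrt{p}\,V\approx U\Sigma_U^{-1/2}=:\tilde U$, and the rows of $\tilde U$ have covariance $I_\kappa$ with variance proxy $\Sigma_U^{-1/2}\wt\Sigma_U\Sigma_U^{-1/2}$. Substituting into the computation above replaces $\Sigma_U$ by $I_\kappa$ in the mean formula, yielding $\mu_k=\mathrm{Tr}(\Sigma_\Theta^k)$, while the condition for the error terms to vanish becomes $\|\Sigma_U^{-1/2}\wt\Sigma_U\Sigma_U^{-1/2}\wt\Sigma_\Theta\|_2\|\Sigma_\Theta\|_2<\gamma$, as stated in Theorem \ref{thm:mainvar}. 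The main obstacle throughout is not any single step but the uniformity in $k$: one must produce combinatorial estimates whose $k$-dependence is at most exponential so that the $n^{-1}$ factors from collapsed indices can absorb them whenever $k^2=o(\log n)$.
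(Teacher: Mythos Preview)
Your strategy matches the paper's: method of moments with Wick-formula verification under $H_0$, and under $H_1$ the decomposition $X_{ij}=M_{ij}+Z_{ij}$ splitting $B_{n,k}$ into a pure-$Z$ piece (handled as in the null), a pure-$M$ piece giving $\mu_{n,k}\to\mu_k$, and cross terms $V_{n,k}$ shown to vanish in probability. The paper packages the null combinatorics via ``weak CLT sentences'' and a counting lemma from \cite{AZ05}, but this is the same pairing mechanism you describe.

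Two points where your sketch drifts from what actually happens. First, the SNR condition $\|\wt\Sigma_\Theta\wt\Sigma_U\|_2\|\Sigma_\Theta\Sigma_U\|_2<\gamma$ is \emph{not} used anywhere in the proof of this proposition; sub-Gaussianity alone (giving polynomial-in-$k$ moment bounds) suffices to kill the cross terms and to show $\mu_{n,k}\to\mu_k$. The SNR condition enters only later, in bounding the second moment of the likelihood ratio. Second, your account of why the mixed terms vanish is off. The conditional mean $\Expect[V_{n,k}\mid\Theta,U]=0$ holds simply because for $k\ge2$ the $2k$ edges in a single bipartite cycle are distinct, so any $S\subsetneq[2k]$ leaves at least one unpaired $Z$-factor; no parity or prior-symmetry argument is needed (and sub-Gaussianity does not imply symmetry). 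The real work is the second-moment bound $\Expect[V_{n,k}^2]\to0$, which the paper obtains by fixing the overlap $E_{w_1}\cap E_{w_2}$ between two cycles, counting admissible $w_2$'s via a forest inequality $\#V_{E_{w_1}\setminus E_f}\ge \#(E_{w_1}\setminus E_f)+1$, and bounding the $M$-moments by sub-exponential tails to get a factor $(Ck)^{Ck}/p\to0$ when $k=o(\sqrt{\log n})$.
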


\subsection{Preliminaries}
 
The proof of the foregoing proposition is inspired by the remarkable paper \cite{AZ05}. 
The fundamental idea is to prove the asymptotic normality by using the method of moments and showing that moments of the limiting distributions satisfy Wick's formula. 
We first state the method of moments.
\begin{lemma}\label{lem:mom}
Let $Y_{n,1},\ldots, Y_{n,l}$ be a random vector of $l$ dimension. Then $(Y_{n,1},\ldots, Y_{n,l}) \stackrel{d}{\to} (Z_1,\ldots,Z_{l})$ if the following conditions are satisfied:
\begin{enumerate}[(i)]
\item 
$\lim_{n \to \infty}\Expect[X_{n,1}\ldots X_{n,m}]$
exists for any fixed $m$ and $X_{n,i} \in \{ Y_{n,1},\ldots,Y_{n,l} \}$ for $1\le i \le m$.
\item(Carleman's Condition)\cite{Carl26}
\[
\sum_{h=1}^{\infty} \left(\lim_{n \to \infty}\Expect[X_{n,i}^{2h}]\right)^{-\frac{1}{2h}} =\infty ~~ \forall ~ 1\le i \le l.
\]
\end{enumerate} 
Further,  
\[
\lim_{n \to \infty}\Expect[X_{n,1}\ldots X_{n,m}]= \Expect[X_{1}\ldots X_{m}].
\]
Here $X_{n,i} \in \{ Y_{n,1},\ldots,Y_{n,l} \}$ for $1\le i \le m$ and $X_{i}$ is the in distribution limit of $X_{n,i}$. 
In particular, if $X_{n,i}= X_{n,j}$ for some $i\neq j \in \{1,\ldots,l \}$ then $X_{i}= X_{j}$. 
\end{lemma}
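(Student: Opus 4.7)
My plan is to establish this as a multivariate extension of the classical univariate moment problem, by combining the Cram\'er--Wold device with Carleman's uniqueness criterion and a uniform integrability argument. First, I would fix an arbitrary $(a_1,\ldots,a_l)\in \mathbb{R}^l$ and set $S_n := \sum_{i=1}^l a_i Y_{n,i}$ and $S := \sum_{i=1}^l a_i Z_i$. Expanding $S_n^m$ by the multinomial theorem realizes it as a finite linear combination of joint moments $\Expect[Y_{n,i_1}\cdots Y_{n,i_m}]$, each of which converges by hypothesis (i). Consequently $\lim_n \Expect[S_n^m]$ exists for every $m$, and one identifies the limit as $\Expect[S^m]$ by a symmetric multinomial expansion on the limit side (this also pins down the joint moments of $(Z_1,\ldots,Z_l)$ consistently).

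The next step is to verify that $S$ satisfies a univariate Carleman condition so that its law is uniquely determined by its moments. Minkowski's inequality gives $\|S_n\|_{2h} \le \sum_{i=1}^l |a_i|\,\|Y_{n,i}\|_{2h}$, so passing to the limit, $\Expect[S^{2h}]^{1/(2h)} \le \sum_i |a_i|\,(\lim_n \Expect[Y_{n,i}^{2h}])^{1/(2h)}$. The elementary bound $(x_1+\cdots+x_l)^{-1/(2h)} \ge l^{-1} \min_i x_i^{-1/(2h)}$ then shows that divergence of the marginal Carleman series in (ii) forces divergence of $\sum_h \Expect[S^{2h}]^{-1/(2h)}$. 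Tightness of $(S_n)$ follows from uniform boundedness of $\Expect[S_n^2]$, and for any weak subsequential limit $\widetilde S$, the bound $\sup_n \Expect[S_n^{2h}]<\infty$ provides uniform integrability of $S_n^m$ for every fixed $m$, hence $\Expect[\widetilde S^m] = \lim_n \Expect[S_n^m] = \Expect[S^m]$. Carleman then yields $\widetilde S \stackrel{d}{=} S$, and since every subsequential limit agrees with $S$, the full sequence $S_n \stackrel{d}{\to} S$. By Cram\'er--Wold, $(Y_{n,1},\ldots,Y_{n,l}) \stackrel{d}{\to} (Z_1,\ldots,Z_l)$.

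The final assertion $\lim_n \Expect[X_{n,1}\cdots X_{n,m}] = \Expect[X_1\cdots X_m]$ is then obtained by the same uniform integrability: since each $X_{n,i}$ is one of $Y_{n,1},\ldots,Y_{n,l}$, H\"older's inequality gives $\sup_n \Expect[|X_{n,1}\cdots X_{n,m}|^{1+\eps}] \le \prod_i \sup_n \Expect[|X_{n,i}|^{m(1+\eps)}]^{1/m} < \infty$ for small $\eps>0$ by (ii), so the product $X_{n,1}\cdots X_{n,m}$ is uniformly integrable and its expectation converges to $\Expect[X_1\cdots X_m]$ by the joint convergence just proved.

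The main technical point, though not a genuine obstacle, is the passage of Carleman's condition from marginals to arbitrary linear combinations; this is handled cleanly by Minkowski together with the componentwise divergence assumption. No deeper issue is anticipated, since this lemma is a standard multivariate adaptation of the Hamburger moment problem and fits the classical pattern of moment methods used in random matrix theory.
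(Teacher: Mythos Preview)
The paper does not supply its own proof of this lemma; it is stated as a standard preliminary (the method of moments with Carleman's condition) and used without justification. Your overall plan---Cram\'er--Wold reduction, moment convergence via multinomial expansion, tightness from bounded second moments, and uniform integrability to pass moments through weak limits---is the natural one and is correct in outline.

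There is, however, a genuine gap at the step where you transfer Carleman's condition from the marginals to an arbitrary linear combination $S$. Your Minkowski bound yields
\[
\Expect[S^{2h}]^{-1/(2h)} \;\ge\; \Big(\sum_i |a_i|\,\|Y_i\|_{2h}\Big)^{-1} \;\ge\; l^{-1}\min_i\big(|a_i|\,\|Y_i\|_{2h}\big)^{-1},
\]
but the implication ``$\sum_h c_{i,h}^{-1}=\infty$ for each $i$ $\Longrightarrow$ $\sum_h \min_i c_{i,h}^{-1}=\infty$'' is false in general, even for nondecreasing sequences $c_{i,h}$: one can arrange the indices to take turns being dominant on successive blocks so that $\max_i c_{i,h}$ is always large while each $c_{i,h}$ is small on its own blocks often enough for the marginal series to diverge. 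So the claim that this is ``handled cleanly by Minkowski together with the componentwise divergence assumption'' does not go through as written. The standard repair is to bypass Cram\'er--Wold and invoke directly the multivariate moment-determinacy theorem (Shohat--Tamarkin; Nussbaum; or Petersen's result that determinacy of every one-dimensional marginal forces determinacy of the joint law): marginal Carleman gives marginal determinacy, hence joint determinacy, and then your tightness and uniform-integrability argument applied to the full vector shows every subsequential weak limit has the prescribed joint moments and is therefore unique. Alternatively, since in the paper's actual application the limiting vector $(Z_1,\dots,Z_l)$ is Gaussian, one may simply observe that every linear combination of a Gaussian vector is Gaussian and hence trivially moment-determined, which closes the gap in that setting without further work.
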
 

Now we state Wick's formula for Gaussian random variables which was first proved by \citet{I18} and later on introduced by \citet{W50} in the physics literature.
 
\begin{lemma}[Wick's formula \cite{W50}]
	\label{lem:wick}
Let $(Y_1,\ldots, Y_{l})$ be a multivariate mean $0$ random vector of dimension $l$ with covariance matrix $\Sigma$ (possibly singular). Then $(Y_1,\ldots, Y_{l})$ is jointly Gaussian if and only if for any integer $m$ and $X_{i} \in \{ Y_1,\ldots,Y_{l} \}$ for $1\le i \le m$
\begin{equation}\label{eqn:wick}
\Expect[X_1\ldots X_{m}]=\left\{
\begin{array}{ll}
  \sum_{\eta} \prod_{i=1}^{\frac{m}{2}} \Expect[X_{\eta(i,1)}X_{\eta(i,2)}] & ~ \text{for $m$ even}\\
  0 & ~ \text{for $m$ odd.}
\end{array}
\right.
\end{equation}
Here $\eta$ is a partition of $\{1,\ldots,m \}$ into $\frac{m}{2}$ blocks such that each block contains exactly $2$ elements and $\eta(i,j)$ denotes the $j$th element of the $i$th block of $\eta$ for $j=1,2$.
\end{lemma}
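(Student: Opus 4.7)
\emph{Strategy.} The plan is to prove both implications in Lemma \ref{lem:wick} separately. The ``only if'' direction (jointly Gaussian $\Rightarrow$ Wick's formula) I would establish either by a direct expansion of the Gaussian moment generating function or by induction via Gaussian integration by parts. The ``if'' direction (Wick's formula $\Rightarrow$ jointly Gaussian) I would derive from the method of moments already stated as Lemma \ref{lem:mom}, together with the Cram\'er--Wold device.

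\emph{Forward direction.} For odd $m$, the symmetry $(Y_1,\ldots,Y_l) \stackrel{d}{=} -(Y_1,\ldots,Y_l)$ for centered jointly Gaussian vectors immediately yields $\Expect[X_1\cdots X_m] = (-1)^m\Expect[X_1\cdots X_m] = 0$. For even $m$, I would equate two expansions of the Gaussian MGF $\Expect[\exp(t'Y)] = \exp(\frac{1}{2}\,t'\Sigma t)$. The Taylor coefficients of the left-hand side in $t \in \reals^l$ are the mixed moments, while the right-hand side expands as
\begin{equation*}
\sum_{n\ge 0}\frac{1}{2^n n!}\Big(\sum_{i,j}\Sigma_{ij}t_it_j\Big)^n.
\end{equation*}
Matching coefficients of $t_{i_1}\cdots t_{i_m}$ produces the sum over pairings $\sum_\eta \prod_{p=1}^{m/2}\Sigma_{\eta(p,1),\eta(p,2)}$; the factor $2^{m/2}(m/2)!$ in the denominator exactly cancels the symmetrization, leaving an unordered sum over the $(m-1)!!$ perfect matchings. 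An equivalent and arguably cleaner route is induction on $m$ via Gaussian integration by parts (Stein's identity) $\Expect[Y_a f(Y)] = \sum_b \Sigma_{ab}\Expect[\partial_b f(Y)]$ with $f(Y)=X_2\cdots X_m$, which directly produces the recursion $\Expect[X_1X_2\cdots X_m]=\sum_{j=2}^m\Expect[X_1X_j]\Expect[\prod_{i\ne 1,j}X_i]$ whose unique solution is the pairing sum.

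\emph{Reverse direction and main obstacle.} For the converse, suppose the moments of $(Y_1,\ldots,Y_l)$ satisfy Wick's formula. Then every mixed moment is a polynomial in the entries of $\Sigma$ and coincides with the corresponding moment of $N_l(0,\Sigma)$. In particular, for every $c\in\reals^l$ and every integer $h$, $\Expect[(c'Y)^{2h}] = (2h-1)!!\,(c'\Sigma c)^h$, so Carleman's condition $\sum_h \Expect[(c'Y)^{2h}]^{-1/(2h)}=\infty$ is satisfied by each one-dimensional projection $c'Y$. Applying Lemma \ref{lem:mom} to $c'Y$, its law is uniquely determined by its moments and must be $N(0,c'\Sigma c)$; the Cram\'er--Wold device then identifies the joint distribution with $N_l(0,\Sigma)$. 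The principal subtlety is the possibly singular case: if $\Sigma v = 0$, then $\Expect[(v'Y)^2]=0$ forces $v'Y=0$ almost surely, so the vector is supported on the range of $\Sigma$, and Wick's formula remains self-consistent because any pairing touching $v$ contributes a zero entry of $\Sigma$ by Cauchy--Schwarz. The non-degenerate Gaussian argument then applies to the restriction of $Y$ to the range of $\Sigma$, and the Cram\'er--Wold identification completes the proof.
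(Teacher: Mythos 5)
The paper itself does not prove Lemma~\ref{lem:wick}: immediately after its statement the text reads ``The proof of the aforesaid lemma is omitted,'' as this is a classical result attributed to Wick (and earlier to Isserlis). So there is no in-paper argument to compare against; I can only assess your proposal on its own terms, and it is essentially correct and standard.

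The forward direction is fine in both variants you sketch (MGF coefficient matching, or Stein-type Gaussian integration by parts giving the recursion $\Expect[X_1\cdots X_m]=\sum_{j\ge 2}\Expect[X_1X_j]\,\Expect[\prod_{i\neq 1,j}X_i]$), and the odd-$m$ symmetry argument is exactly right. For the reverse direction, your chain of implications --- Wick's formula for the $Y$'s $\Rightarrow$ $\Expect[(c'Y)^{2h}]=(2h-1)!!\,(c'\Sigma c)^h$ for every $c$ $\Rightarrow$ Carleman's condition for $c'Y$ $\Rightarrow$ $c'Y\sim N(0,c'\Sigma c)$ $\Rightarrow$ joint normality by Cram\'er--Wold --- is sound. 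Two small remarks. First, your invocation of Lemma~\ref{lem:mom} is slightly off target: that lemma concerns distributional convergence of a \emph{sequence} of random vectors, not uniqueness of a single law from its moments. You can repair this trivially by taking the constant sequence $Y_{n,i}\equiv Y_i$ (so the ``limit'' must equal both the fixed law of $Y$ and the Gaussian whose moments match), but a direct appeal to the classical statement that Carleman's condition implies moment determinacy would be cleaner. Second, the extra discussion of the singular case is unnecessary: the Cram\'er--Wold step already covers degenerate directions, since if $c'\Sigma c=0$ then all moments of $c'Y$ vanish, identifying $c'Y$ with the point mass at $0$, i.e.\ $N(0,0)$; the Cauchy--Schwarz digression (really just $v'\Sigma w=(\Sigma v)'w=0$) adds nothing. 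Neither issue is a gap --- the argument closes as written.
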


The proof of the aforesaid lemma is omitted. 
However, we note that the random variables $Y_1,\ldots, Y_{l}$ may be the same. In particular, taking $Y_1=\cdots = Y_{l}$, Lemma \ref{lem:wick} provides a description of the moments of multivariate Gaussian random variables.

\subsection{Proof of Proposition \ref{prop:bicyclelim}}
\label{subsec:proof-bicycle}
In this part, we focus on the proof of Proposition \ref{prop:bicyclelim} for testing problem \eqref{eq:bipartite}.
In view of the discussion after the statement of Theorem \ref{thm:mainvar}, the modification of the formula for $\mu_k$ for \eqref{eq:bipartitevar} is natural as we could in some sense treat this case similarly to \eqref{eq:bipartite} with $\Sigma_U = I_\kappa$.
In particular, we could modify the proof below by considering a sequence of high probability events $\Omega_{n}$ such that $\maxnorm{\Indc_{\Omega_n}\sqrt{p}(UU')^{-{1}/{2}}-\Sigma_U^{-{1}/{2}}} \le \delta_{n} \to 0$ and then establish all the weak limits on $\Omega_{n}$.
Here and after, for any matrix $A$, $\|A\|_{\max} = \max_{i,j}|A_{ij}|$ is its vector $\ell_\infty$ norm.




\paragraph{Additional notation and definition}
Given a set $\mathcal{S}$, an $\mathcal{S}$ letter $s$ is simply an element $s\in \mathcal{S}$. 
With two sets $\mathcal{S}_{1}$ and $\mathcal{S}_{2}$, a \emph{bi-word} for $\mathcal{S}_{1}$ and $\mathcal{S}_{2}$ is defined as an alternating ordered sequence of letters 
where the letters at odd positions come from $\mathcal{S}_{1}$ and the letters at even positions come from $\mathcal{S}_{2}$;
The final letter is required to come from $\mathcal{S}_{1}$. 
We call the letters from $\mathcal{S}_{1}$ \emph{type I} and those from $\mathcal{S}_{2}$ \emph{type II}. 
Given any bi-word $w$, the $i$th type I letter is denoted by $\alpha_{i}$ and the $i$th type II letter by $\beta_{i}$.
As a convention, we start the subscripts for letters in a bi-word with $0$. 
Observe that any bi-word $w$ starts from and ends with a type I letter and so the total number of letters in $w$ is always odd. 
In particular, any bi-word $w$ looks like $(\alpha_{0},\beta_{0},\alpha_{1},\beta_{1},\ldots, \alpha_{k})$.
We use $l(w)=2k+1$ 
to denote the length of $w$.
A bi-word is called \emph{closed} if $\alpha_{0}=\alpha_{k}$. 

Throughout the proof, we take $\mathcal{S}_{1}=\{1,\ldots,p \}$ and $\mathcal{S}_{2}= \{ 1,\ldots, n \}$.  
The bipartite graph induced by a bi-word $w= (\alpha_{0},\beta_{0},\alpha_{1},\beta_{1},\ldots, \alpha_{k})$ is denoted by $G_w$.  
It is defined as follows. 
One treats the letters $(\alpha_{0},\beta_{0},\alpha_{1},\beta_{1},\ldots, \alpha_{k})$ as nodes and one puts an edge between $\alpha_{i}$ and $\beta_{j}$ whenever $|i-j|=1$. 
Observe that for any closed bi-word $w$, $G_{w}$ is a cycle of even length\footnote{Cycles of odd length in a bipartite graph do not exist.}. 
Two bi-words $w_{1}$ and $w_{2}$ are called \emph{paired} if the graphs $G_{w_{1}}$ and $G_{w_{2}}$ are the same. 
For a closed bi-word $w=(\alpha_{0},\beta_{0},\alpha_{1},\beta_{1},\ldots, \alpha_{k})$, its \emph{mirror image} is $\tilde{w}= (\alpha_{k},\beta_{k-1},\alpha_{k-1},\beta_{k-2},\ldots, \alpha_{0})$. 
Furthermore, for a cyclic permutation $\tau$ of the set $\{0,1,\ldots,k-1 \}$ and a closed bi-word $w$, we define $w^{\tau}:=(\alpha_{\tau(0)},\beta_{\tau(0)},\alpha_{\tau(1)},\beta_{\tau(1)},\ldots,\beta_{\tau(k-1)},\alpha_{\tau(0)})$. 
If two closed bi-words $w_{1}$ and $w_{2}$ are paired, then there exists a cyclic permutation $\tau$ such that either $w_{1}^{\tau}=w_{2}$ or $\tilde{w}_{1}^{\tau}=w_{2}$. 

\begin{remark}
These bi-words are not fundamentally different from the words defined in \cite{AZ05} and \cite{AGZ}. 
In particular, they form a restricted class of words where the alphabet set is taken to be $\mathcal{S}_{1} \cup \mathcal{S}_{2}$. 
Hence all the properties of the words can be derived with minimal modifications of the proofs in \cite{AZ05} and \cite{AGZ}. 
\end{remark}

We call an ordered tuple of $m$ words $(w_1,\ldots,w_m)$ a \emph{sentence}. For any sentence $a= (w_1,\ldots w_m)$, $G_{a}=(V_a,E_a)$ is the graph with $V_a=\cup_{i=1}^{m}V_{w_i}$ and $E_{a}= \cup_{i=1}^{m} E_{w_i}$.
A sentence $a$ is called a \emph{weak CLT sentence} if each edge in $G_a$ is traversed at least twice.
By Lemma 4.10 of \cite{AZ05},
The following lemma gives a bound on the number of weak CLT sentences.
For any numbers $b$ and $c$, $b\vee c = \max(b,c)$ and $b\wedge c = \min(b,c)$.
\begin{lemma}\label{lem:appendix}
Let $\mathcal{A}_{t} = \mathcal{A}_t(l_1,\dots, l_m)$ be the set of weak CLT sentences such that for each $a \in \mathcal{A}_{t}$, it consists of $m$ words of lengths $l_1,\dots, l_m$ respectively and $\#V_{a}=t$. Then 
\begin{equation}\label{eqn:bddweakclt}
\# \mathcal{A}_{t} \le 2^{\sum_{i} l_i}\left(C_1\sum_{i}l_i\right)^{C_2m}\left(\sum_{i}l_i\right)^{3(\sum_{i}l_i-2t)}n^{t}\left(\gamma \vee 1 \right)^{t}.
\end{equation}
\end{lemma}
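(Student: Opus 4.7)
The proof follows the template of Lemma 4.10 in \cite{AZ05}, with minor adaptations to the bipartite setting. The plan is to encode each weak CLT sentence $a \in \mathcal{A}_t$ by a bounded amount of combinatorial data from which $a$ can be reconstructed, and then to bound the number of such encodings by a product of factors corresponding to each piece of data.

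First, I would parse the concatenation of letters in $a = (w_1,\ldots,w_m)$ from left to right, and at each of the $L := \sum_i l_i$ positions, record a bit indicating whether the letter introduces a new vertex to $G_a$ or revisits an already-seen vertex. This yields the factor $2^L$. Next, for each of the $t$ first-visit positions, one must assign a label: since the bi-word structure forces the parity type of each position (type I or type II), the label must be drawn from $\mathcal{S}_1=[p]$ or $\mathcal{S}_2=[n]$ accordingly, contributing at most $p^{t_1}n^{t_2}$ choices, where $t_1+t_2=t$. Since $p/n \to \gamma$, this is bounded by $(\gamma\vee 1)^{t}\,n^{t}$ up to a constant absorbed into $C_1$.

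The main combinatorial step is bounding the number of ways to specify the revisits. Because every edge of $G_a$ is traversed at least twice, the number of distinct edges is at most $(L-m)/2$, so the cyclomatic complexity of $G_a$ is controlled by $L-2t$. Following the AZ05 encoding, the revisits split into \emph{tree-type} revisits (that retrace edges of a fixed spanning forest of $G_a$) and \emph{non-tree-type} revisits (that close cycles or make other nontrivial identifications). The tree-type revisits are essentially forced once the spanning forest is known, while the non-tree-type revisits, together with auxiliary positional data indicating where in the walk they occur, can be encoded using at most $L^{3(L-2t)}$ choices. This is the heart of the argument and the main obstacle in transplanting it to the bipartite case: one must verify that the tree-skeleton encoding is compatible with the bipartition, which amounts to checking that at each step the encoded letter has the correct parity type. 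Since bipartite cycles have even length and positions within each bi-word have predetermined parity, this compatibility is routine but must be verified step by step.

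Finally, the factor $(C_1 L)^{C_2 m}$ accounts for the gluing of the $m$ separate words: each new word may share vertices with letters already appearing in previous words, and encoding these inter-word identifications (equivalently, the positions at which two words meet and the matching of shared vertices) costs at most $C_1 L$ choices per word, for a total of $(C_1 L)^{C_2 m}$. Multiplying the four contributions $2^L$, $(C_1 L)^{C_2 m}$, $L^{3(L-2t)}$, and $n^{t}(\gamma\vee 1)^{t}$ yields the stated bound \eqref{eqn:bddweakclt}. The only genuinely new ingredient relative to \cite{AZ05} is the factor $(\gamma\vee 1)^{t}$ accounting for the fact that type-I first-visit letters range over $[p]$ rather than $[n]$; the remainder of the argument is a direct bipartite translation of the Wigner-type encoding of \cite{AZ05}.
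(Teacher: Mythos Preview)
Your proposal is correct and matches the paper's approach essentially exactly. The paper's own proof is a one-paragraph deferral to Lemma 4.3 of \cite{Ban16} (itself built on \cite{AZ05}), noting that the sole modification is that the $t$ first-visit vertices now split as $t_1$ labels from $\mathcal{S}_1=[p]$ and $t_2$ from $\mathcal{S}_2=[n]$, giving $p^{t_1}n^{t_2}\le n^t(\gamma\vee 1)^t$; you have identified precisely this as the ``only genuinely new ingredient'' and otherwise correctly sketched the standard AZ05 encoding that the cited lemma carries out.
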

\begin{proof}
The proof of this lemma is almost identical to the proof of Lemma 4.3 in \cite{Ban16}. The only difference is in the possible choices of vertices of $V_{a}$. Here this choice will be $n^{t_{1}}p^{t_{2}}$ where $t_{1}+ t_{2}= t$ and $t_{1}$ is the number of vertices which are from $\mathcal{S}_{1}$ and $t_{2}$ is the number of vertices which are from $\mathcal{S}_{2}$. It is easy to see in this case $n^{t_{1}}p^{t_{2}}= n^{t}\gamma^{t_{2}}\le n^{t}\left(\gamma \vee 1 \right)^{t}$. 
\end{proof}

\paragraph{Proof of part (i)}
We complete the proof of this part in two steps. 
In the first step we calculate the asymptotic variances of $(B_{n,k_1},\ldots, B_{n,k_l})$. 
The second step is dedicated towards proving the asymptotic normality and independence of $(B_{n,k_1},\ldots, B_{n,k_l})$.

\subparagraph{Step 1 (Calculation of variance):}
Under $H_0$, the case $k_{1}=1$ is simple as it is a sum of i.i.d.~random variables and hence its variance calculation is omitted.
One important thing to note is that the case $k=1$ depends on $\E[X_{i,j}^{4}]$ (which is equal to $3$ in the current case). This makes the asymptotic variance of $B_{n,1}$ equal to $2\gamma$,
which is not be the case in general.

In what follows, we focus on the case when $k_{1} \ge 2$. 
Now we prove that $\Var(B_{n,k}) = (1+o(1)) 2k\gamma^{k}$ for any finite $k$. 
Define for any bi-word $w= (\alpha_{0},\beta_{0},\alpha_{1},\beta_{1},\ldots, \alpha_{k})$, 
\begin{equation}
\label{eq:X-w}
X_{w}:=  X_{\alpha_0,\beta_0}X_{\alpha_1,\beta_0} X_{\alpha_1,\beta_1} X_{\alpha_2,\beta_1} \ldots X_{\alpha_{k-1},\beta_{k-1}} X_{\alpha_0,\beta_{k-1}}-  \Indc_{k=1}.
\end{equation}
Now observe that 
\begin{equation}
	\label{eqn:varexpI}
\begin{aligned}
\mathrm{Var}(B_{n,k}) &= \left(\frac{1}{n}\right)^{2k}
\Expect\left[\left(\sum_{w} X_{w}\right)^2\right] = \left(\frac{1}{n}\right)^{2k} \Expect \left[ \sum_{w_{1},w_{2}}X_{w_{1}}X_{w_{2}} \right].
\end{aligned}
\end{equation}
Since both $X_{w_{1}}$ and $X_{w_{2}}$ are products of independent mean $0$ random variables that appears exactly once with $X_{w_1}$ or $X_{w_2}$, $\Expect[X_{w_{1}}X_{w_{2}}] \ne 0$ if and only if all the edges in $G_{w_{1}}$ are repeated in $G_{w_{2}}$. 
This happens only if $w_{1}$ and $w_{2}$ are paired. 
Now there are $(1+o(1))n^{k}p^{k}$ choices for $w_{1}$ and for each $w_{1}$ there are exactly $2k$ $w_{2}$'s such that $w_{1}$ and $w_{2}$ are paired (images of cyclic permutations of $w_1$ and of $\tilde{w}_1$). 
As a consequence, 
\[
\Var(B_{n,k}) = (1+o(1)) 2k\frac{n^{k}p^{k}}{n^{2k}}= (1+o(1)) 2k\gamma^{k}.
\]

\subparagraph{Step 2 (Proof of asymptotic normality):}  In order to complete this step, it suffices to prove the following two limits:
\begin{equation}\label{eqn:lim1}
\lim_{n \to \infty} \Expect\left[\left(B_{n,k_1}- p\Indc_{k_{1}=1}\right)B_{n,k_2} \right] \to 0
\end{equation}
whenever $k_1 < k_2$ and there exist random variables $Z_1,\ldots,Z_m$ such that for any fixed $m$
\begin{equation}\label{eqn:lim2}
\lim_{n \to \infty}\Expect[X_{n,1}\ldots X_{n,m}]\to\left\{
\begin{array}{ll}
  \sum_{\eta} \prod_{i=1}^{\frac{m}{2}} \Expect[Z_{\eta(i,1)}Z_{\eta(i,2)}] & ~\text{for $m$ even},\\
  0 &~\text{for $m$ odd.}
\end{array}
\right.
\end{equation}
where $X_{n,i} \in \left\{ \frac{B_{n,k_1}- p\Indc_{k_{1}=1}}{\sqrt{2k_1\gamma^{k_{1}}}},\ldots, \frac{B_{n,k_l}}{\sqrt{2k_l\gamma^{k_{l}}}} \right\}$.
To see this, observe that (\ref{eqn:lim2}) simultaneously imply parts $i)$ and $ii)$ of Lemma \ref{lem:mom}. 
The implication of part $i)$ is obvious. 
For part $ii)$ one can take $X_{n,i}$'s to be all equal and from Wick's formula (Lemma \ref{lem:wick}) the limiting distribution of $X_{n,i}$'s are normal and it is well known that normal random variables satisfy Carleman's condition. 
In addition, (\ref{eqn:lim2}) also implies that the limiting distribution of $\left(\frac{B_{n,k_1}- p\Indc_{k_{1}=1}}{\sqrt{2k_1\gamma^{k_{1}}}},\ldots, \frac{B_{n,k_l}}{\sqrt{2k_l\gamma^{k_{l}}}} \right)$ is multivariate normal.
Hence one gets the asymptotic independence by applying (\ref{eqn:lim1}).

\medskip

We first prove (\ref{eqn:lim1}). Observe that 
\[
\Expect\left[\left(B_{n,k_1}-p\Indc_{k_{1}=1}\right)B_{n,k_2} \right] = \left(\frac{1}{n}\right)^{{k_1+k_2}} \Expect \left[ \sum_{w_{1},w_{2}}X_{w_{1}}X_{w_{2}} \right].
\]
However, here $l(w_{1})\neq l(w_{2})$. So $\Expect \left[ X_{w_{1}}X_{w_{2}} \right]=0$. As a consequence, (\ref{eqn:lim1}) holds.

\medskip 
Now we prove (\ref{eqn:lim2}). 
Let $l_i-1$ be the length of the bipartite cycle corresponding to $X_{n,i}$ (so that $l_i$ is the length of the word corresponding to the bipartite cycle). 
Observe that $\frac{l_{i}-1}{2} \in \{k_1,\ldots, k_l\}$ for any $i$. 
At first we expand the left hand side of (\ref{eqn:lim2}) as
\begin{equation}
	\label{eqn:break}
\Expect[X_{n,1}\ldots X_{n,m}]=\left(\frac{1}{n}\right)^{\frac{1}{2}\sum_{i}(l_i-1)} \sum_{w_1,\ldots,w_m} \Expect\left[X_{w_1}\ldots X_{w_m}\right].
\end{equation}
Here each of the graphs $G_{w_1},\ldots,G_{w_m}$ are cycles of length $l_1-1,\ldots,l_m-1$ respectively. 

In order to have $\Expect\left[X_{w_1}\ldots X_{w_m}\right] \neq 0$, we need each of the edges in $G_{w_1},\ldots, G_{w_m}$ to be traversed more than once. This is true even for $l_{i}=2$ for some $i$. 
In particular, in this case $G_{w_{i}}$ is a single edge and this edge is traversed twice. 
So one can think this as a cycle of length $2$.
Thus, $a = (w_1,\dots, w_m)$ is a weak CLT sentence.
Given any weak CLT sentence $a$, we introduce a partition $\eta(a)$ of $\{1,\ldots,m \}$ in the following way: If $i$ and $j$ are in same block of the partition $\eta(a)$, then $G_{w_i}$ and $G_{w_j}$ have at least one edge in common. 
As a consequence, we can further expand the right hand side of (\ref{eqn:break}) as
\begin{equation}
	\label{eqn:breakII}
\left(\frac{1}{n}\right)^{\frac{1}{2}\sum_{i}(l_i-1)}\sum_{\eta}\sum_{\substack{w_1,\ldots,w_m: \\
\eta(w_1,\ldots,w_m)=\eta} }\Expect\left[X_{w_1}\ldots X_{w_m}\right].
\end{equation}

We now show that we only need to care about those $\eta$'s which have at most $\lfloor \frac{m}{2} \rfloor$ blocks when evaluating the expectation. 
For any number $b$, $\lfloor b\rfloor$ denotes the largest integer that is no larger than $b$.
To this end, observe that each block in $\eta$ should have at least $2$ elements.
Otherwise there is an $i$ such that $G_{w_{i}}$ does not share any edge with $G_{w_{j}}$ for any $j \neq i$. 
Hence the random variables $X_{w_{i}}$ and $\prod_{j \neq i} X_{w_{j}}$ are independent, and so $\Expect\left[X_{w_1}\cdots X_{w_m}\right] = \Expect[X_{w_i}]\, \Expect[\prod_{j\neq i} X_{w_j}] =0$ from definition.
As a consequence, in order for $\Expect\left[X_{w_1}\cdots X_{w_m}\right] \neq 0$, the number of blocks in $\eta \le \lfloor \frac{m}{2} \rfloor$.

In what follows, we show that only those $\eta$'s such that the number of blocks in them are exactly $\frac{m}{2}$ contribute to a non-vanishing asymptotic mean. Note that this necessarily requires $m$ to be even.

When $\eta(w_1,\ldots,w_m)$ have strictly less than $\lfloor\frac{m}{2}\rfloor$ blocks (including all cases of odd $m$ and the case of even $m$ when the number of blocks is strictly less than $\frac{m}{2}$), $G_a$ has strictly less than $\lfloor \frac{m}{2} \rfloor$ connected components. 
From Lemma 4.10 of \cite{AZ05} it follows that in this case $\#V_{a}< \sum_{i=1}^{m}\frac{l_i-1}{2}$.
Applying Lemma \ref{lem:appendix} and noting that the $a$'s are weak CLT sentences, we have
\begin{equation}
	\label{eqn:bddweakcltII}
\begin{split}
&\left(\frac{1}{n}\right)^{\frac{1}{2}\sum_{i}(l_i-1)}
\sum_{a:\#V_{a}< \sum_{i=1}^{m}\frac{l_i-1}{2}} 
\Expect\left[X_{w_1}\ldots X_{w_m}\right]\\
&\le \left(\frac{1}{n}\right)^{\frac{1}{2}\sum_{i}(l_i-1)} \sum_{t<\frac{1}{2}\sum_{i}(l_i-1)}\left(C_1\sum_{i}l_i\right)^{C_2m}\left(\sum_{i}l_i\right)^{3(\sum_{i}l_i-2t)}n^{t}\left(\gamma \vee 1 \right)^{t} \Expect\left[\left| X_{11} \right|^{\frac{1}{2}\sum_{i}l_i}\right]\\
& \le \Expect\left[\left| X_{11} \right|^{\frac{1}{2}\sum_{i}l_i}\right]\left(C_1\sum_{i}l_i\right)^{C_2m}\left(\sum_{i}l_i\right)^{3m} \left(\gamma \vee 1 \right)^{\frac{1}{2}\sum_{i}l_i} \sum_{t<\frac{1}{2}\sum_{i}(l_i-1)} \left(\frac{\left(\sum_{i}l_i\right)^{3}}{\sqrt{n}}\right)^{\sum_{i}(l_i-1)-2t}\\
& \le 
\left(C_{3}\sum_{i}l_{i}\right)^{C_{4}\sum_{i}l_{i}}
\left(\gamma \vee 1 \right)^{\frac{1}{2}\sum_{i}l_i}O\left(\frac{\left(\sum_{i}l_i\right)^{3}}{\sqrt{n}}\right)
\end{split}
\end{equation}
Here we have also used the fact for any standard Gaussian random variable $\Expect[|X|^{l}]\le (C_{3}l)^{C_{4}l}$. 
Observe that the rightmost side of (\ref{eqn:bddweakcltII}) is $o(1)$ 
since for $l_1,\dots, l_m = o(\sqrt{\log n})$, 
$\left(C_{3}\sum_{i=1}^ml_{i}\right)^{C_{4}\sum_{i=1}^ml_{i}}/n^\alpha \to 0$ whenever $\alpha > 0$ and $m$ is finite\footnote{In fact the term $\Expect[|X_{11} |^{{\sum_{i}l_i}/{2}}]$ is not optimal. One can prove the CLT under the null upto $o(\log n)$ order by the arguments similar to (2.1.32) in Anderson et al. \cite{AGZ}. However for our purpose this suffices.}.

Now the only remaining partitions are pair partitions which have exactly $\frac{m}{2}$ many blocks (and so naturally $m$ is even).
We now fix a partition $\eta$ of this kind. 
Let for any $i \in \{1,\ldots,\frac{m}{2}\}$, $\eta(i,1)<\eta(i,2)$ be the elements in the $i$th block. 
Observe now that fixing a pair partition $\eta$ and $(w_1,\ldots,w_{m})$ such that $\eta(w_{1},\ldots,w_{m})=\eta$, the random variables $X_{w_{\eta(i_1,j)}}$ and $X_{w_{\eta(i_2,j)}}$ are independent when ever $i_{1}\neq i_{2}$ for any $j\in \{ 1,2 \}$.
As a consequence, we now can rewrite (\ref{eqn:breakII}) as 
\begin{equation}\label{eqn:bal}
\begin{split}
&\left(\frac{1}{n}\right)^{\frac{1}{2}\sum_{i}(l_i-1)}
\sum_{\eta}\sum_{\substack{w_1,\ldots,w_m: \\
\eta(w_1,\ldots,w_m)=\eta}}\Expect\left[X_{w_1}\ldots X_{w_m}\right]\\
&=o(1)
+\left(\frac{1}{n}\right)^{\frac{1}{2}\sum_{i}(l_i-1)}
\sum_{\eta ~ \text{pair partition}}\sum_{\substack{w_1,\ldots,w_m: \\
\eta(w_1,\ldots,w_m)=\eta}} \prod_{i=1}^{\frac{m}{2}} \Expect\left[ X_{w_{\eta(i,1)}}X_{w_{\eta(i,2)}}\right]
\end{split}
\end{equation}
Now observe that whenever $\prod_{i=1}^{\frac{m}{2}} \Expect[ X_{w_{\eta(i,1)}}X_{w_{\eta(i,2)}}]\neq 0$, 
we have $w_{\eta(i,1)}$ and $w_{\eta(i,2)}$ are paired. 
When $l(w_{\eta(i,1)})=l(w_{\eta(i,2)})\neq 3$, 
there are $(1+o(1))(l_{\eta(i,1)}-1)(n\sqrt{\gamma})^{l_{\eta(i,1)}-1}$ many such choices of $(w_{\eta(i,1)}, w_{\eta(i,2)})$ for every $i$.
Here $l_{\eta(i,1)}-1$ equals the common length of the cycles induced by $w_{\eta(i,1)}$ and $w_{\eta(i,2)}$.
In this case 
$\Expect[ X_{w_{\eta(i,1)}}X_{w_{\eta(i,2)}}]= 1$. 
On the other hand, when $l(w_{\eta(i,1)})=3$, there are $(1+o(1))n^{l_{\eta(i,1)}-1} \gamma$ many such choices of $(w_{\eta(i,1)}, w_{\eta(i,2)})$ for every $i$ and in this case 
$\Expect [ X_{w_{\eta(i,1)}}X_{w_{\eta(i,2)}} ]= 2$.
Hence, we get the following further reduction of the right side of (\ref{eqn:bal}):
\begin{equation}
\begin{split}
&~~o(1) + (1+o(1))\left(\frac{1}{n}\right)^{\frac{1}{2}\sum_{i}(l_i-1)}
\sum_{\eta ~ \text{pair partition}}
\prod_{i=1}^{\frac{m}{2}} (l_{\eta(i,1)}-1)\mathbf{1}_{l_{\eta(i,1)}= l_{\eta(i,2)}}(n\sqrt{\gamma})^{l_{\eta(i,1)}-1}
\\
&= o(1)+ (1+o(1))
\sum_{\eta ~ \text{pair partition}}
\prod_{i=1}^{\frac{m}{2}} (l_{\eta(i,1)}-1)\gamma^{\frac{1}{2} (l_{\eta(i,1)}-1)}
\mathbf{1}_{l_{\eta(i,1)}= l_{\eta(i,2)}}.
\end{split}
\end{equation}
Recalling that $l_{i}=2k_{i}+1$ we complete the proof.
 \hfill $\square$

\paragraph{Proof of part (ii)}
We at first look at the case when $k=1$. This is an exceptional case and needs to be handled differently. 
Then we deal with the general case of $k\geq 2$.

\subparagraph{Analysis of $B_{n,1}$:}
Recall that $B_{n,1} = \frac{1}{n}\sum_{i=1}^{n}\sum_{j=1}^{p} X_{i,j}^{2}$. 
We have
\begin{equation}
B_{n,1}\left| \Theta, U  \right. 
=  \frac{1}{n}\sum_{i=1}^{n}\sum_{j=1}^{p} \left(  Z_{i,j}+ M_{i,j} \right)^{2}
\end{equation}
where for any $(i,j)$,
\begin{equation}
	\label{eq:Mij}
M_{i,j} = \frac{1}{\sqrt{p}} \sum_{l=1}^{\kappa} \Theta_{i,l} U_{j,l}
\end{equation}
and $Z_{i,j} \stackrel{iid}{\sim} N(0,1)$. 
Observe that in this case one can apply the Lindeberg--Feller central limit theorem.
So it suffices to calculate the limiting mean and variance of $B_{n,1}\left|\Theta, U\right.$. 
Now 
\begin{equation}
\begin{split}
\Expect\left[X_{i,j}^{2}\left|\Theta, U\right.\right] &= 
1 + M_{i,j}^2,
\end{split}
\end{equation}
and 
\begin{equation}
\begin{split}
\Var\left[ X_{i,j}^{2}| \Theta, U \right]& = 
\Var\left[Z_{i,j}^{2} + 2Z_{i,j}M_{i,j} | \Theta, U \right]\\
& = \Var\left[ Z_{i,j}^{2}\right] + 4 \Var[Z_{i,j}] M_{i,j}^{2}\\
& = 2 + 4 M_{i,j}^2.
\end{split}
\end{equation}
So it is enough to prove 
\begin{equation}\label{first_second}
\frac{1}{n} \sum_{i,j}M_{i,j}^{2} 
\stackrel{p}{\to} 
\sum_{l_{1},l_{2}} \Sigma_{\Theta}(l_{1},l_{2})\Sigma_{U}(l_{1},l_{2}).
\end{equation}
As a consequence,
\[
\Var\left[ B_{n,1}\right]= \frac{1}{n^2} \left(2np+ \sum_{i,j} 4M_{i,j}^2\right) \to 2\gamma.
\]
To this end, note that
\begin{equation}
\begin{split}
\frac{1}{n} \sum_{i,j}M_{i,j}^{2}
 &= \frac{1}{n}\left[\sum_{i,j} \sum_{l,l'} \frac{1}{p}\Theta_{i,l}\Theta_{i,l'} U_{j,l} U_{j,l'}\right]\\
 &= \sum_{l=1}^{\kappa}\sum_{l'=1}^{\kappa}\left(\frac{1}{n} \sum_{i=1}^n\Theta_{i,l}\Theta_{i,l'} \right) \left(\frac{1}{p} \sum_{j=1}^p U_{j,l} U_{j,l'} \right).
\end{split}
\end{equation}
The weak law of large numbers then gives
\begin{equation*}
\frac{1}{n} \sum_{i=1}^n\Theta_{i,l}\Theta_{i,l'} 
\stackrel{p}{\to} \Sigma_{\Theta}(l,l') 
\qquad
\mbox{and}
\qquad
\frac{1}{p}\sum_{j=1}^p U_{j,l} U_{j,l'}  
\stackrel{p}{\to} \Sigma_{U}(l,l').
\end{equation*}
Since $\kappa$ is fixed, we obtain (\ref{first_second}).

\subparagraph{Analysis of $B_{n,k}$ with $k \geq 2$:}
We first write 
\begin{equation}
\begin{split}
B_{n,k} & = \frac{1}{n^{k}} \sum_{i_0, j_0,  \ldots,i_{k-1}, j_{k-1}} X_{i_0,j_0}  \ldots  X_{i_0,j_{k-1}}\\ 
&= \frac{1}{n^{k}} \sum_{i_0, j_0,\ldots,i_{k-1}, j_{k-1}} \left(Z_{i_0,j_0}+M_{i_0,j_0} \right)  \ldots  \left(Z_{i_0,j_{k-1}}+M_{i_0,j_{k-1}}\right)\\
&= \frac{1}{n^{k}} \sum_{i_0, j_0,\ldots,i_{k-1}, j_{k-1}}Z_{i_0,j_0}  \ldots  Z_{i_0,j_{k-1}} 
+ \mu_{n,k} + V_{n,k},
\end{split}
\end{equation}
where 
\begin{equation}
	\label{eq:mu-k}
\mu_{n,k}:=\frac{1}{n^{k}}  \sum_{i_0, j_0,\ldots,i_{k-1}, j_{k-1}}
M_{i_0, j_0} \cdots M_{i_0, j_{k-1}},
\end{equation}
and $V_{n,k}$ collects all the terms involving cross-products.

The proof of the asymptotic normality of $\frac{1}{n^{k}} \sum_{i_0, j_0,\ldots,i_{k-1}, j_{k-1}}Z_{i_0,j_0}  \ldots  Z_{i_0,j_{k-1}}$ is the same as the proof we have just finished for the null distribution.
We shall prove later that $\mu_k$ satisfies \eqref{eq:mu-k-b}.
Now we focus on $V_{n,k}$.
Observe that $\Expect\left[V_{n,k} \,|\, \Theta, U \right]=0$ and hence $\Expect[V_{n,k}]=0$. 
So our goal is to prove $\Expect[V_{n,k}^{2}] \to 0$ which implies $V_{n,k} \stackrel{p}{\to} 0$. 

Note that $V_{n,k}= \sum_{w} V_{n,k,w}$ where the summation is over all closed bi-words of length $2k+1$.
Fix such a bi-word $w$ and let $\emptyset \subsetneq E_{f}\subsetneq E_{w}$ be any subset. Then
\[
V_{n,k,w}= \frac{1}{n^{k}}\sum_{\emptyset \subsetneq E_{f}\subsetneq E_{w}} \mu(E_{f},w) \prod_{ e \in E_{w} \backslash E_{f}} Z_{e}.
\]
Here
\[
\mu(E_{f},w) = \prod_{e \in E_{f}} M_{\alpha_e,\beta_e}.
\]
where for any edge $e$, $\alpha_e$ and $\beta_e$ denote its two end points which belong to $\mathcal{S}_{1}$ and $\mathcal{S}_{2}$ respectively.
Now 
\begin{equation}
\begin{split}
\Expect\left[V_{n,k}^{2}\left| \Theta, U \right.\right] &= \sum_{w_{1},w_{2}} \Expect\left[V_{n,k,w_{1}} V_{n,k,w_{2}}\left|\Theta, U \right. \right].
\end{split}
\end{equation}
We now give an upper bound to $\Expect\left[V_{n,k,w_{1}} V_{n,k,w_{2}} \right]$. 
At first fix any word $w_{1}$ and the set $\emptyset \subsetneq E_{f} \subsetneq E_{w_{1}}$ and consider all the words $w_{2}$ such that $E_{w_{1}}\cap E_{w_{2}}= E_{w_{1}} \backslash E_{f}$. As every edge in $G_{w_{1}}$ and $G_{w_{2}}$ appear exactly once within $G_{w_1}$ and $G_{w_2}$,
\begin{equation}
\begin{split}
&~~~~~~ \Expect[V_{n,k,w_1} V_{n,k,w_{2}} \left| \Theta, U \right. ]\\
& = \sum_{
E_{w_{1}}\backslash E' \subset E_{w_{1}} \backslash E_{f}
}\left(\frac{1}{n}\right)^{2k}\left[\mu(E',w_{1})\mu(E_{w_2}\backslash(E_{w_1}\backslash E'),w_{2})\right]\Expect\prod_{e \in E_{w}\backslash E'}(Z_{e})^{2}\\
& =\sum_{E_{w_{1}}\backslash E' \subset E_{w_{1}} \backslash E_{f}}\left(\frac{1}{n}\right)^{2k} 
 \left[\mu(E',w_{1})\mu(E_{w_2}\backslash(E_{w_1}\backslash E'),w_{2})\right].
\end{split}
\end{equation}
Now it is enough to prove 
\begin{equation}\label{condsecvnw}
\begin{split}
&\Expect\left[\left(\frac{1}{n}\right)^{2k}\sum_{w_{1}}\sum_{\emptyset \subsetneq E_{f}\subsetneq E_{w}} \sum_{E_{f} \subset E'} \sum_{\{w_{2}| E_{w_{1}}\cap E_{w_{2}}= E_{w_{1}} \backslash E_{f} \} }\left[\mu(E',w_{1})\mu(E_{w_2}\backslash(E_{w_1}\backslash E'),w_{2})\right]\right]\\
& \le \left(\frac{1}{n}\right)^{2k}\sum_{w_{1}}\sum_{\emptyset \subsetneq E_{f}\subsetneq E_{w}} \sum_{E_{f} \subset E'} \sum_{\{w_{2} | E_{w_{1}}\cap E_{w_{2}}= E_{w_{1}} \backslash E_{f} \} }\Expect\left|\mu(E',w_{1})\mu(E_{w_2}\backslash(E_{w_1}\backslash E'),w_{2})\right| \to 0.
\end{split}
\end{equation}
Now observe that for any $w$ in consideration and any subset $E $ of $E_{w}$,
\begin{equation*}
\left| \mu(E,w)\right| = \left(\frac{1}{p}\right)^{\frac{\#E}{2}}\prod_{e \in E} 
\left| \sum_{l=1}^{\kappa} \Theta_{\alpha_e,l} U_{\beta_e,l} 
\right|.
\end{equation*}
Hence we have for any $E \subset E_{w_1}$ and $\bar{E}\subset E_{w_2}$ such that $\#E = \#\bar{E}$,
\begin{equation}
	\label{mue'mateI}
\begin{split}
& \Expect\left| \mu(E,w_{1})\mu(\bar{E},w_{2})\right|\\
& \le \left(\frac{1}{p}\right)^{{\#E}} \prod_{e \in E} \Expect\left[ \left| \sum_{l=1}^{\kappa} \Theta_{\alpha_e,l} U_{\beta_e,l} \right|^{2\#E} \right]^{\frac{1}{2\# E}}  
\prod_{\bar{e} \in \bar{E}}\Expect\left[ \left| \sum_{l=1}^{\kappa} \Theta_{\alpha_{\bar{e}},l} U_{\beta_{\bar{e}},l} \right|^{2\#E} \right]^{\frac{1}{2\# E}}\\
& \le \left(\frac{1}{p}\right)^{\#E} \left( C_{5} \# E \right)^{C_{6} \# E}
\le \left(\frac{1}{p}\right)^{{\#E}} (C_{7}k)^{C_{8}k}.
\end{split}
\end{equation}
The last step follows from the fact no matter what the value of $e$ is, $\sum_{l=1}^{\kappa} \Theta_{\alpha_e,l} U_{\beta_e,l}$ is sub-exponential with parameter $C$ for some constant $C$ that depends on $\kappa$, $\widetilde{\Sigma}_\Theta$ and $\widetilde{\Sigma}_U$. 
Plugging  the estimate obtained in (\ref{mue'mateI}) in (\ref{condsecvnw}), we have 
\begin{equation}\label{mue'mateII}
\begin{split}
&\Expect\left[\left(\frac{1}{n}\right)^{2k}\sum_{w_{1}}\sum_{\emptyset \subsetneq E_{f}\subsetneq E_{w}} \sum_{E_{f} \subset E'} \sum_{\left\{w_{2}| E_{w_{1}}\cap E_{w_{2}}= E_{w_{1}} \backslash E_{f}\right\} }\left[\mu(E',w_{1})\mu(E_{w_2}\backslash(E_{w_1}\backslash E'),w_{2})\right]\right]\\
& \le \left(\frac{1}{n}\right)^{2k}2^{2k}\sum_{w_{1}}\sum_{\emptyset \subsetneq E_{f}\subsetneq E_{w}} \sum_{E_{f} \subset E'} \sum_{\left\{w_{2}| E_{w_{1}}\cap E_{w_{2}}= E_{w_{1}} \backslash E_{f}\right\} } \left(\frac{1}{p}\right)^{{\#E'}} (C_{7}k)^{C_{8}k}\\
& \le \left(\frac{1}{n}\right)^{2k}(C_{7}k)^{C_{8}k}\sum_{w_{1}}\sum_{\emptyset \subsetneq E_{f}\subsetneq E_{w}}  \left( \frac{1}{p} \right)^{\# E_{f}} \sum_{E_{f} \subset E'} \sum_{\left\{w_{2}|E_{w_{1}}\cap E_{w_{2}}= E_{w_{1}} \backslash E_{f}\right\} } 1.
\end{split}
\end{equation}
Observe that the graph corresponding to the edges $E_{w_{1}} \backslash E_{f}$ is a disjoint collection of line segments. 
Let the number of such line segments be $\zeta$. Obviously $\zeta \le \#(E_{w_{1}} \backslash E_{f})$.  
The number of ways these $\zeta$ components can be placed in $w_{2}$ is bounded by $(2k)^{\zeta}\le (2k)^{\#(E_{w_{1}} \backslash E_{f})}\le (2k)^{2k}$ and all other nodes in $w_{2}$ can be chosen freely. 
So there are at most $(1+o(1))\left[ \left(\gamma \vee 1\right) n \right]^{2k-\#V_{E_{w_{1}}\backslash E_f}}(2k)^{2k}$ choices of such $w_{2}$. Here $V_{E_{w}\backslash E_{f}}$ is the set of vertices of the graph corresponding to 
$G_w$ with all edges in $E_f$ removed, i.e., $E_w\backslash E_f$.
Observe that, whenever $2k = \E_w >\#E_{f}>0$, $E_{w}\backslash E_{f}$ is a forest and so $\#V_{E_{w}\backslash E_{f}}\ge \#(E_{w} \backslash E_{f})+1$ which is equivalent to
$$
2k-\#V_{E_{w}\backslash E_f} \le \#E_{f}-1.
$$ 
Also observe that there are no more than $2^{2k}$ many choices of $E_{f}$'s and for each $E_{f}$ there are no more than $2^{2k}$ many choices for $E'$'s. 
Combining all these, we have the rightmost side of (\ref{mue'mateII}) is bounded by 
\begin{equation}
\begin{split}
&\left(\frac{1}{n}\right)^{2k}(C_{7}k)^{C_{8}k}\sum_{w_{1}}\sum_{\emptyset \subsetneq E_{f}\subsetneq E_{w}}  \left( \frac{1}{p} \right)^{\# E_{f}} (2)^{2k} \times (2k)^{2k} \left[ \left(\gamma \vee 1\right) n \right]^{\#E_{f}-1}\\
&\le 
\frac{1}{p}(C_{7}k)^{C_{8}k} (2k)^{2k} 2^{4k} \left[ \frac{\gamma \vee 1}{\gamma} \right]^{2k} \to 0.
\end{split}
\end{equation}

\medskip

Now our final task is to prove $\mu_{n,k}\stackrel{p}{\to}\mu_k$ defined in \eqref{eq:mu-k-b}.
First we expand $\mu_{n,k}$ in \eqref{eq:mu-k} as 
\begin{equation}
\begin{split}
\mu_{n,k}  
&= \frac{1}{n^{k}} \frac{1}{p^{k}} \sum_{i_0, j_0,\ldots,i_{k-1}, j_{k-1}} \sum_{l_{1},\ldots,l_{2k}} \Theta_{i_0,l_{1}} U_{j_0,l_{1}}\ldots \Theta_{i_0,l_{2k}} U_{j_{k-1},l_{2k}}\\
&= \sum_{l_{1},\ldots,l_{2k}} \left(  \frac{1}{n^{k}}\left[ \sum_{i_{0},\ldots, i_{k-1}}\Theta_{i_0,l_{1}} \Theta_{i_0,l_{2k}} \Theta_{i_1,l_{2}} \Theta_{i_{1},l_{3}} \ldots \Theta_{i_{k-1},l_{2k-2}} \Theta_{i_{k-1},l_{2k-1}}\right] \right. \times \\
&~~~~~~~~~~~~~~~~ \left. \frac{1}{p^{k}} \left[\sum_{j_{0},\ldots, j_{k-1}} U_{j_{0},l_{1}}U_{j_0,l_{2}} U_{j_1,l_{3}} U_{j_{1},l_{4}} \ldots U_{j_{k-1},l_{2k-1}} U_{j_{k-1},l_{2k}} \right] \right).
\end{split}
\end{equation}
Now fix the values of $l_{1},\ldots,l_{2k}$ and for this value of the group assignment we have
\begin{align*}
\Expect\left[\frac{1}{n^{k}}\sum_{i_{0},\ldots, i_{k-1}}\Theta_{i_0,l_{1}} \Theta_{i_0,l_{2k}} \Theta_{i_1,l_{2}} \Theta_{i_{1},l_{3}} \ldots \Theta_{i_{k-1},l_{2k-2}} \Theta_{i_{k-1},l_{2k-1}}\right]~~~~~\\
= m^\Theta_{l_1,\dots, l_{2k}} = (1+o(1))\Sigma_{\Theta}(l_{1},l_{2k})\ldots \Sigma_{\Theta}(l_{2k-2},l_{2k-1})&.	
\end{align*}
Now
\begin{equation}\label{thetavariancee'mateI}
\begin{split}
&\Var\left[\frac{1}{n^{k}}\sum_{i_{0},\ldots, i_{k-1}}\Theta_{i_0,l_{1}} \Theta_{i_0,l_{2k}} \Theta_{i_1,l_{2}} \Theta_{i_{1},l_{3}} \ldots \Theta_{i_{k-1},l_{2k-2}} \Theta_{i_{k-1},l_{2k-1}}  \right]\\
&= \frac{1}{n^{2k}}\sum_{i_{0}^{(1)},\ldots, i_{k-1}^{(1)}}\sum_{i_{0}^{(2)},\ldots, i_{k-1}^{(2)}} \Expect\left[ \left(\Theta_{i_0^{(1)},l_{1}}\Theta_{i_{0}^{(1)},l_{2k}} \ldots \Theta_{i_{k-1}^{(1)},l_{2k-2}}\Theta_{i_{k-1}^{(1)},l_{2k-1}} - m^\Theta_{l_1,\dots, l_{2k}}\right)\times \right.\\
&~~~~~~~~~~~~~~~~~~~~~~~~~~~~~~~~~~~~~\left. \left(\Theta_{i_0^{(1)},l_{2}}\Theta_{i_{0}^{(1)},l_{2k}} \ldots \Theta_{i_{k-1}^{(2)},l_{2k-2}}\Theta_{i_{k-1}^{(2)},l_{2k-1}} - m^\Theta_{l_1,\dots, l_{2k}}\right)  \right].
\end{split}
\end{equation}
However, if the indices $(i_{0}^{(1)},\ldots, i_{k-1}^{(1)})$ and $(i_{0}^{(2)},\ldots, i_{k-1}^{(2)})$ are disjoint, 
\begin{equation*}
\begin{split}
&\Expect\left[ \left(\Theta_{i_0^{(1)},l_{1}}\Theta_{i_{0}^{(1)},l_{2k}} \ldots \Theta_{i_{k-1}^{(1)},l_{2k-2}}\Theta_{i_{k-1}^{(1)},l_{2k-1}} - m^\Theta_{l_1,\dots, l_{2k}}\right)\times \right.\\
&~~~~~~\left. \left(\Theta_{i_0^{(1)},l_{2}}\Theta_{i_{0}^{(1)},l_{2k}} \ldots \Theta_{i_{k-1}^{(2)},l_{2k-2}}\Theta_{i_{k-1}^{(2)},l_{2k-1}} - m^\Theta_{l_1,\dots, l_{2k}}\right)  \right] =0 .
\end{split}
\end{equation*}
Now consider the indices 
$$\mathcal{A}:=\left\{(i_{0}^{(1)},\ldots, i_{k-1}^{(1)}),(i_{0}^{(2)},\ldots, i_{k-1}^{(2)})~\left|\right.~ 
\#\big(\{ i_{0}^{(1)},\ldots, i_{k-1}^{(1)} \} \cap \{i_{0}^{(2)},\ldots, i_{k-1}^{(2)} \} \big)\ge 1 \right\}.$$
It is easy to see $\#\mathcal{A}\le \left(c_{1}k\right)^{c_{2}k} n^{2k-1}$. Further from sub-Gaussianity and H\"{o}lder's inequality we also have
 \begin{equation*}
\begin{split}
&\Expect\left[\left| \left(\Theta_{i_0^{(1)},l_{1}}\Theta_{i_{0}^{(1)},l_{2k}} \ldots \Theta_{i_{k-1}^{(1)},l_{2k-2}}\Theta_{i_{k-1}^{(1)},l_{2k-1}} - m^\Theta_{l_1,\dots, l_{2k}}\right)\times \right.\right.\\
&~~~~~~\left. \left.\left(\Theta_{i_0^{(1)},l_{2}}\Theta_{i_{0}^{(1)},l_{2k}} \ldots \Theta_{i_{k-1}^{(2)},l_{2k-2}}\Theta_{i_{k-1}^{(2)},l_{2k-1}} - m^\Theta_{l_1,\dots, l_{2k}}\right)\right|  \right] = (c_{3}k)^{c_{4}k}
\end{split}
\end{equation*}
 uniformly over the indices. This gives us the final expression of (\ref{thetavariancee'mateI}) to be bounded by  $\frac{\left(c_{1}c_{3}k\right)^{(c_{2}+c_{4})k}}{n} \to 0.$
The  proof for 
\[
\frac{1}{p^{k}} \sum_{j_{0},\ldots, j_{k-1}} U_{j_{0},l_{1}}U_{j_0,l_{2}} U_{j_1,l_{3}} U_{j_{1},l_{4}} \ldots U_{j_{k-1},l_{2k-1}} U_{j_{k-1},l_{2k}} \stackrel{p}{\to} \Sigma_{U}(l_{1},l_{2})\Sigma_{U}(l_{3},l_{4})\ldots \Sigma_{U}(l_{2k-1},2k)
\]
is analogous and so we omit the details.
\hfill{$\square$}
\section{Proof of main results}
\label{sec:proof}

In this section, we focus on the proof of Theorem \ref{thm:main}. 
The proof of Theorem \ref{thm:mainvar} can be established analogously using the same strategy mentioned at the beginning of Section \ref{subsec:proof-bicycle}.

Throughout the proof, 
without further specification,
all probability and expectation calculations are conducted with respect to $\mathbb{P}_{0,n}$, i.e., under the null hypothesis.
For any two matrices $A=(a_{i,j})\in \reals^{{m_1 \times m_2}}$ and $B=(b_{i,j}) \in \reals^{n_1 \times n_2}$, we define their Kronecker product $A \otimes B$ as
\[
A \otimes B = \left(
\begin{array}{llll}
a_{1,1}B & a_{1,2}B& \ldots & a_{1,m_2}B\\
a_{2,1}B & a_{2,2}B & \ldots & a_{2,m_2}B\\
\vdots  & \vdots & \ldots  & \vdots\\
a_{m_1,1}B & a_{m_1,2}B & \ldots & a_{m_1,m_2}B
\end{array}
\right).
\]
In addition, $\mathrm{vec}(A) = (A_{*1}',\dots, A_{*m_2}')' \in \reals^{m_1m_2\times 1}$ is the vector obtained from stacking all column vectors of $A$ in order.

\subsection{Proof of parts 1 and 2}
Recall that $p = p_n$ is a sequence depending on $n$.
In this proof we shall use the following two sequences of $\sigma$-fields:
\begin{equation}
\mathcal{G}_{n} = \sigma\left( \{X_{i}\}_{i=1}^n \right),\qquad
\mathcal{F}_{n} = \sigma\left( \{\Theta_{i*}\}_{i=1}^n, \{U_{j*}\}_{j=1}^p \right).
\end{equation}
It is straightforward to verify that 
\begin{equation}
\begin{split}
L_{n}= \Expect [L_{n}^{f} | \mathcal{G}_{n}]
\end{split}
\end{equation}
where the expectation is taken over $\Theta$ and $U$ and for $M_{i,j}$ defined in \eqref{eq:Mij},
\[
L_{n}^{f}:= \exp \left\{ \sum_{i=1}^{n}\sum_{j=1}^{p} \left(X_{i,j}
M_{i,j}
 - \frac{1}{2} M_{i,j}^2
 \right)\right\}.
\]
 
\paragraph{Step 1.}
We now consider any sequence of events $\Omega_{n} \in \mathcal{F}_{n}$ such that $\Prob\left[ \Omega_{n}^{c} \right] \to 0$ as $n\to\infty$. 
An explicit description of the $\Omega_{n}$'s of our interest will be given in step 2.
Now define 
\[
\wt{L}_{n}:= \Expect \big[L_{n}^{f}\Indc_{\Omega_{n}}\,|\,\mathcal{G}_{n} \big].
\]
In the rest of this step, we argue that it suffices to prove the desired results for $\wt{L}_n$.
Since $\wt{L}_{n}\le L_{n}$ almost surely under $\mathbb{P}_{0,n}$, the measure $\widetilde{\mathbb{Q}}_{n}$ on $\mathcal{G}_{n}$ defined as 
\[
\widetilde{\mathbb{Q}}_{n}(A_{n})= 
\frac{1}{\Prob[ \Omega_{n}]}
\Expect_{\mathbb{P}_{0,n}}\big[\wt{L}_{n}\Indc_{A_n}\big], \qquad \forall ~ A_{n} \in \mathcal{G}_{n},
\]
is a probability measure. 
By definition,
\begin{equation}\label{unif_meas}
\begin{split}
0 & \le \left|\mathbb{P}_{1,n}(A_{n})- \widetilde{\mathbb{Q}}_{n}(A_{n})\right|
\\
&\le \frac{1}{\Prob\left[ \Omega_{n} \right]}
\Expect_{\mathbb{P}_{0,n}}[ ( L_{n}- \wt{L}_{n} )\Indc_{A_n} ]
+ \mathbb{P}_{1,n}(A_{n})\frac{\Prob[ \Omega_{n}^{c} ]}{\Prob[ \Omega_{n}]}\\
& \le \frac{1}{\Prob\left[ \Omega_{n} \right]}
\Expect_{\mathbb{P}_{0,n}}\big[ L_{n}-\wt{L}_{n} \big]
+ \frac{\Prob\left[ \Omega_{n}^{c} \right]}{\Prob\left[ \Omega_{n}\right]}
= \frac{1}{\Prob\left[ \Omega_{n} \right]}
\Expect\big[L_{n}^{f} \Indc_{\Omega_{n}^{c}} \big]
+\frac{\Prob\left[ \Omega_{n}^{c} \right]}{\Prob\left[ \Omega_{n}\right]}\\
& = \frac{1}{\Prob\left[ \Omega_{n} \right]}
\Expect\Big[ \Indc_{\Omega_{n}^{c}} \Expect\big[ L_{n}^{f} |\mathcal{F}_{n}\big] \Big]+\frac{\Prob\left[ \Omega_{n}^{c} \right]}{\Prob\left[ \Omega_{n}\right]}
= 2\,\frac{\Prob\left[ \Omega_{n}^{c} \right]}{\Prob\left[ \Omega_{n}\right]}\,.
\end{split}
\end{equation}
In other words, the total variation distance between $\bbP_{1,n}$ and $\widetilde{\bbQ}_n$ converges to zero.
As a consequence, for any fixed $l\in \naturals$ and any $1\leq k_1 < \cdots < k_l = o(\sqrt{\log(n)})$, under $\widetilde{\bbQ}_n$,
\[
\left(\frac{B_{n,k_1}- p\Indc_{k_{1}=1}-\mu_{k_1}}{\sqrt{2k_1\gamma^{k_1}}},\ldots, \frac{B_{n,k_l}-\mu_{k_l}}{\sqrt{2k_l\gamma^{k_l}}}\right) \stackrel{d}{\to} N_{l}(0,I_{l}).
\]
Now if one can choose $\Omega_{n}$ in such a way that 
\begin{equation}
	\label{eq:l2cond}
	\begin{split}
\limsup_{n\to\infty} \Expect_{\mathbb{P}_{0,n}}\left[\wt{L}_{n}^{2} \right]
= 
\limsup_{n\to\infty} \Expect_{\mathbb{P}_{0,n}}\left[ \left(\frac{1}{\Prob\left[ \Omega_{n} \right]}\wt{L}_{n}\right)^{2} \right] \le \exp \sth{\sum_{k=1}^{\infty} \frac{\mu_{k}^{2}}{2k\gamma^{k}}} ,
\end{split}
\end{equation}
then one can use Proposition \ref{prop:norcont} to conclude that 
\[
\left.\frac{1}{\Prob\left[ \Omega_{n} \right]}\wt{L}_{n}\right|{\mathbb{P}_{0,n}} \stackrel{d}{\to} \exp\left\{ \sum_{k=1}^{\infty} \frac{2\mu_{k}Z_{k}-\mu_{k}^{2}}{4k\gamma^{k}} \right\}.
\]
Hence, $\wt{L}_{n}\,|\,{\mathbb{P}_{0,n}}$ converges in distribution to the same limit.
Then it remains to prove that 
\[
L_{n}-\wt{L}_{n}\,|\,\mathbb{P}_{0,n} \stackrel{p}{\to} 0.
\] 
Observe that $L_{n} \ge \wt{L}_{n}$ almost surely under $\bbP_{0,n}$.
If the last display is not true, then there exist positive constants $c_1,c_2>0$ and a subsequence $n_{k}$ such that 
\[
\liminf_{n_{k}\to\infty} \mathbb{P}_{0,n_{k}}\left[ \Indc_{L_{n_{k}}-\wt{L}_{n_{k}}>c_1} \right] \ge c_2.
\]
However
\begin{equation}
 \begin{split}
 &
\liminf_{n_{k}\to\infty}\left[\mathbb{P}_{1,n_{k}}\left( \Indc_{L_{n_{k}}-\wt{L}_{n_{k}}>c_1} \right)- \widetilde{\mathbb{Q}}_{n_{k}} \left( \Indc_{L_{n_{k}}-\wt{L}_{n_{k}}>c_1} \right)\right]\\
&\ge 
\liminf_{n_{k}\to\infty} \left\{c_{1} \mathbb{P}_{0,n_{k}}\left( \Indc_{L_{n_{k}}-\wt{L}_{n_{k}}>c_1} \right) - {\Prob\left[ \Omega_{n_{k}}^{c} \right]}\widetilde{\mathbb{Q}}_{n_{k}} \left( \Indc_{L_{n_{k}}-\wt{L}_{n_{k}}>c_1}\right)\right\} \ge c_{1}c_{2}.
 \end{split}
\end{equation}
This contradicts (\ref{unif_meas}) since that bound is uniform over all $A_{n} \in \mathcal{G}_{n}$.

\paragraph{Step 2.}
Now we prove \eqref{eq:l2cond} by making appropriate choices of the $\Omega_n$'s.
First observe that 
\begin{equation}
\begin{split}
\Expect
\big[ \wt{L}_{n}^{2} \big]
&= \Expect
\Big[ \Expect\big[L_{n}^{f}\Indc_{\Omega_n} \,|\, \mathcal{G}_{n}\big]^{2} \Big]
\\
&= \Expect
\Big[ \Expect\big[L_{n}^{f(1)}L_{n}^{f(2)}\Indc_{\Omega_n^{(1)}} \Indc_{\Omega_n^{(2)}}\,\big|\,\mathcal{G}_{n}\big] \Big]\\
&= \Expect\Big[ L_{n}^{f(1)}L_{n}^{f(2)}\Indc_{\Omega_n^{(1)}} \Indc_{\Omega_n^{(2)}} \Big]\\
&= \Expect\Big[ \Indc_{\Omega_n^{(1)}} \Indc_{\Omega_n^{(2)}} 
\Expect\big[ L_{n}^{f(1)}L_{n}^{f(2)} \,\big|\, \mathcal{F}_{n} \big] \Big].
\end{split}
\end{equation} 
Here $L_{n}^{f(1)}$ and $L_{n}^{f(2)}$ are two independent copies of $L_{n}^{f}$ where the $X_{i}$'s are kept fixed but one takes two i.i.d.~copies of the $\Theta$'s and $U$'s. 
This is feasible (only) under the null hypothesis when the $X_i$'s are independent of $\Theta$ and $U$. 
With slight abuse of notation, we use $\calF_n$ to denote the $\sigma$-field generated by both copies.
We call the corresponding random variables 
$\{\Theta^{(1)}, U^{(1)}\}$ and 
$\{\Theta^{(2)}, U^{(2)}\}$.
Observe that 
\begin{equation}\label{secmom:exp}
\begin{aligned}
&\Expect\left[ L_{n}^{f(1)}L_{n}^{f(2)}\left| \mathcal{F}_{n}\right . \right]\\
& = 
\exp\left[ \sum_{i=1}^{n}\sum_{j=1}^{p}\left( \sum_{l=1}^{\kappa} \frac{1}{\sqrt{p}}\Theta_{i,l}^{(1)} U_{j,l}^{(1)} \right)
\left( \sum_{l=1}^{\kappa} \frac{1}{\sqrt{p}}\Theta_{i,l}^{(2)} U_{j,l}^{(2)}  \right) \right]\\
&= \exp \left[ \sum_{l_{1}=1}^{\kappa}\sum_{l_2=1}^{\kappa} \frac{1}{p}\inn{\Theta_{*l_1}^{(1)}}{\Theta_{*l_2}^{(2)}} \inn{U_{*l_1}^{(1)}}{U_{*l_2}^{(2)}}
\right].
\end{aligned}
\end{equation}
We denote $\Expect[ L_{n}^{f(1)}L_{n}^{f(2)} | \mathcal{F}_{n} ]
=\psi_n = \psi_n(\Theta^{(1)},\Theta^{(2)},U^{(1)},U^{(2)})$ 
for conciseness. 

Now define 
\begin{equation}
\label{eq:conditioning}
\Omega_{n}^{(1)}:= \left\{
\max_{1\le l_1, l_2 \le \kappa}\left(  \left|\frac{1}{n}\inn{\Theta_{*l_1}^{(1)}}{\Theta_{*l_2}^{(1)}}-\Sigma_{\Theta}(l_1,l_2)\right|, 
\left|\frac{1}{p}\inn{U_{*l_1}^{(1)}}{U_{*l_2}^{(1)}}- \Sigma_{U}(l_1,l_2)\right| \right) \le \delta_{n}
\right\}
\end{equation}
where $\delta_{n} \to 0$ and $\bbP((\Omega_n^{(1)})^c)\to 0$ as $n\to\infty$. 
Such a sequence of $\delta_{n}$ exists due to law of large numbers.
Define $\Omega_{n}^{(2)}$ as an identical and independent copy of $\Omega_{n}^{(1)}$ that depends on ${\Theta}^{(2)}, U^{(2)}$.
Conditioning on $\Theta^{(1)}$, $U^{(1)}$ and $U^{(2)}$, 
the exponent in (\ref{secmom:exp}) can be written as
\begin{equation}
\begin{split}
\sqrt{\frac{n}{p}}\, \inn{Z}{V}
\end{split}
\end{equation}
where
\begin{equation}\label{def:V}
V = A\,\mathrm{vec}(U^{(2)}) \in \reals^{\kappa^2}\qquad \mbox{for}\quad A = \frac{1}{\sqrt{p}}\, I_\kappa \otimes (U^{(1)})' \in \reals^{\kappa^2\times \kappa p},
\end{equation}
\begin{equation}
	\label{def:Z}
Z = B\,\mathrm{vec}(\Theta^{(2)})\in \reals^{\kappa^2}\qquad
\mbox{for}\quad
B = \frac{1}{\sqrt{n}}\,I_\kappa\otimes (\Theta^{(1)})'\in \reals^{\kappa^2\times \kappa n}.
\end{equation}
Our goal is to prove the random variables $\{\psi_n\, \Indc_{\Omega_n^{(1)}} \Indc_{\Omega_n^{(2)}}\}_{n\geq 1}$ are uniformly integrable. 
To this end, it suffices to show that $\Expect[\psi_n^{(1+\eta)}\Indc_{\Omega_n^{(1)}} \Indc_{\Omega_n^{(2)}}]$ is uniformly bounded for some $\eta>0$.  
Now from assumption on the priors,
we have for sufficiently large values of $n$,
\begin{equation}\label{expectation:psiI}
\begin{split}
\Expect\left[ \psi_n^{(1+\eta)}\, \Indc_{\Omega_n^{(1)}} \Indc_{\Omega_n^{(2)}} \Big| 
\Theta^{(1)}, U^{(1)}, U^{(2)}
\right]
& 
= \Expect\left[ \psi_n^{(1+\eta)}\, \Indc_{\Omega_n^{(1)}}\Indc_{\wt{\Omega}_n^{(2)}}\Indc_{\widehat{\Omega}_{n}^{(2)}} \Big|
\Theta^{(1)}, U^{(1)}, U^{(2)}
\right]\\
& \le \Expect\left[ \Indc_{\Omega_n^{(1)}} \Indc_{\wt{\Omega}_n^{(2)}}
\exp\Big( \frac{1}{2\gamma} (1+2\eta)^2 V' B D_{\Theta} B' V \Big) \right]. 
\end{split}
\end{equation}
Here 
\begin{align*}
\wt{\Omega}_n^{(2)} & = \left\{ \max_{1\le l_1, l_2 \le \kappa}   
\left| \frac{1}{p} \inn{U_{*l_1}^{(2)}}{U_{*l_2}^{(2)}}-\Sigma_{U}(l_1,l_2)\right|  
\le \delta_{n} \right\},\\
\widehat{\Omega}_{n}^{(2)} & = \left\{ \max_{1\le l_1, l_2 \le \kappa}  
\left| \frac{1}{n}\inn{\Theta_{*l_1}^{(2)}}{\Theta_{*l_2}^{(2)}}-\Sigma_{\Theta}(l_1,l_2)\right| \le \delta_{n}   \right\}
\end{align*}
and 
\begin{equation}\label{form:D}
	D_\Theta = \widetilde{\Sigma}_\Theta \otimes I_n \in \reals^{\kappa n\times \kappa n}.
\end{equation}
As a consequence, for $B$ defined in \eqref{def:Z}, we have
\begin{equation}
B D_{\Theta} B'= \widetilde{\Sigma}_\Theta \otimes \left[\frac{1}{n}(\Theta^{(1)})'\Theta^{(1)}\right].
\end{equation}
Recall that for any matrix $A$, let $\maxnorm{A} = \max_{i,j}|A_{ij}|$ be the vector $\ell_\infty$-norm of $A$.
On the event $\wt{\Omega}_{n}^{(2)} \cap  \Omega_n^{(1)}$, we have $\maxnorm{BD_{\Theta}B'- \wt{\Sigma}_{\Theta}\otimes {\Sigma}_{\Theta}} < \maxnorm{\wt{\Sigma}_{\Theta}}\delta_{n}$. Now we know that for any symmetric matrix $\Sigma$ of dimension $\kappa^{2} \times \kappa^{2}$, $\|\Sigma\|_{2} \le \| \Sigma \|_{\mathrm{F}} \le \kappa^{2} \maxnorm{\Sigma}$ where $\|\cdot \|_{2}$ and $\| \cdot \|_{\mathrm{F}}$ denote the spectral norm and  Frobenius norm respectively. 
So 
\begin{align*}
\Indc_{\Omega_n^{(1)}} \Indc_{\wt{\Omega}_n^{(2)}}V' B D_{\Theta} B' V  
&\leq \Indc_{\Omega_n^{(1)}} \Indc_{\wt{\Omega}_n^{(2)}} V' \Big(\wt{\Sigma}_{\Theta} \otimes \Sigma_{\Theta}+ \kappa^2 \maxnorm{\wt{\Sigma}_{\Theta}}\delta_{n}I_{\kappa^2} \Big)V\\
& \leq \Indc_{\Omega_n^{(1)}} \Indc_{\wt{\Omega}_n^{(2)}} V' \Big(\wt{\Sigma}_{\Theta} \otimes \Sigma_{\Theta}+ \delta'_{n}I_{\kappa^2} \Big)V
\end{align*}
where $\delta'_n\to 0$ is a sequence depending only on $\kappa$, $\widetilde{\Sigma}_\Theta$ and $\delta_n$.
Therefore, we have
\begin{equation}
\begin{split}
&~~~~\Expect\left[ \Indc_{\Omega_n^{(1)}} \Indc_{\wt{\Omega}_n^{(2)}}
 \exp\Big( \frac{1}{2\gamma} (1+2\eta)^2 V' B D_{\Theta} B' V \Big) 
 \right]\\
 & \leq \Expect\left[ \Indc_{\Omega_n^{(1)}} \Indc_{\wt{\Omega}_n^{(2)}}\exp\Big( \frac{1}{2\gamma}  (1+2\eta)^2
 V' \big(\wt{\Sigma}_{\Theta} \otimes \Sigma_{\Theta}+ \delta'_{n}I_{\kappa^2} \big)V \Big) \right]\\
 &=  \Expect\left[ \Indc_{\Omega_n^{(1)}} \Expect\left[\Indc_{\wt{\Omega}_n^{(2)}}\exp\Big( \frac{1}{2\gamma} (1+2\eta)^2 V' \big(\wt{\Sigma}_{\Theta} \otimes \Sigma_{\Theta}+ \delta'_{n}I_{\kappa^2} \big)V \Big) \Big| \Theta^{(1)}, U^{(1)} 
 \right]\right]\\
 & \le \Expect\left[ \Indc_{\Omega_n^{(1)}} \Expect\left[
 \exp\Big( \frac{1}{2\gamma} (1+2\eta)^2 V' \big(\wt{\Sigma}_{\Theta} \otimes \Sigma_{\Theta}+ \delta'_{n}I_{\kappa^2} \big)V \Big) \Big| \Theta^{(1)}, U^{(1)}  \right]\right]\\
 &= \Expect \left[ \Indc_{\Omega_n^{(1)}} \exp\Big( \frac{1}{2\gamma} (1+2\eta)^2 V' \big(\wt{\Sigma}_{\Theta} \otimes \Sigma_{\Theta}+ \delta'_{n}I_{\kappa^2} \big)
 V \Big)\right].
\end{split}
\end{equation} 
In {Step 3} we prove that the sequence 
\begin{equation}
	\label{eq:UI}
\limsup_{n\to\infty}\Expect \left[ \Indc_{\Omega_n^{(1)}} \exp\Big( \frac{1}{2\gamma} (1+2\eta)^2 V' \big(\wt{\Sigma}_{\Theta} \otimes \Sigma_{\Theta}+ \delta'_{n}I_{\kappa^2} \big)
 V \Big)\right] < \infty\,\,\, \mbox{for some $\eta>0$.}
\end{equation}
If we assume \eqref{eq:UI}, the rest of the proof can be completed as follows. Observe that by central limit theorem 
\begin{equation*}
\psi_n\, \Indc_{\Omega_n^{(1)}} \Indc_{\Omega_n^{(2)}} \stackrel{d}{\to}
\exp\left( \frac{1}{\sqrt{\gamma}} \sum_{l_1=1}^{\kappa} \sum_{l_2=1}^{\kappa} T_{l_1,l_2} Y_{l_1,l_2} \right)
\end{equation*}
where 
$\frac{1}{\sqrt{n}}\inn{\theta_{l_1}^{(1)}}{\theta_{l_2}^{(2)}} \stackrel{d}{\to} T_{l_1,l_2}$ and $\frac{1}{\sqrt{p}}\inn{\bfu_{l_{1}}^{(1)}}{\bfu_{l_{2}}^{(2)}}\stackrel{d}{\to}Y_{l_1,l_2}$.
In addition, the collections $\{T_{l_1,l_2}\}$ and $\{Y_{l_1,l_2} \}$ are mutually independent. 
Furthermore, the random variables $T_{l_1,l_2}$ are jointly Gaussian with mean $0$ and $\Cov(T_{l_{1},l_{2}},T_{l_{3},l_{4}})= \Sigma_{\Theta}(l_{1},l_{3})\Sigma_{\Theta}(l_{2},l_{4})$ and 
analogous results hold for $\{Y_{l_1,l_2}\}$.
Let $T = (T_{l_1,l_2})$ and $Y = (Y_{l_1,l_2})$ be $\kappa\times \kappa$ matrices.
Then the foregoing discussion implies that $\mathrm{vec}(T)\sim N_{\kappa^2}(0, \Sigma_\Theta\otimes \Sigma_\Theta)$ and is independent of 
$\mathrm{vec}(Y) \sim N_{\kappa^2}(0, \Sigma_U\otimes \Sigma_U)$.
This, together with the uniform integrability of $\psi_n\Indc_{\Omega_n^{(1)}} \Indc_{\Omega_n^{(2)}}$, implies that
\begin{equation}
\begin{split}
\lim_{n\to\infty}\Expect\left[ \Indc_{\Omega_n^{(1)}} \Indc_{\Omega_n^{(2)}} \psi_n\right] &= \Expect\left[ \exp\left( \frac{1}{\sqrt{\gamma}}
\inn{\mathrm{vec}(T)}{\mathrm{vec}(Y)} 
\right) 
\right]
\\
&
= \Expect\left[ \exp\left( \frac{1}{2\gamma} \mathrm{vec}(Y)' (\Sigma_{\Theta}\otimes \Sigma_\Theta) \mathrm{vec}(Y) \right) \right]\\
&= \exp\left\{ \frac{1}{2} \sum_{i=1}^{\kappa^{2}} \log\left(1- \frac{\lambda_{i}}{\gamma}\right) \right\}\\
&= \exp\left\{ \sum_{k=1}^{\infty} \frac{\sum_{i=1}^{\kappa^2} \lambda_{i}^{k}}{2k\gamma^{k}} \right\}
\\
&
= \exp\left\{ \sum_{k=1}^{\infty} \frac{\Tr\left( (\Sigma_\Theta\Sigma_U)^k \otimes (\Sigma_\Theta\Sigma_U)^k \right)}{2k\gamma^{k}}  \right\}.
\end{split}
\end{equation}
Here 
$\{\lambda_{i}\}_{1\le i \le \kappa^2}$ are the eigenvalues of the matrix $(\Sigma_U\otimes \Sigma_U)^{{1}/{2}}(\Sigma_\Theta\otimes \Sigma_\Theta) (\Sigma_U\otimes \Sigma_U)^{{1}/{2}}$.
We complete the proof by noting that $\Tr( (\Sigma_\Theta\Sigma_U)^k \otimes (\Sigma_\Theta\Sigma_U)^k)= [\Tr((\Sigma_\Theta \Sigma_U)^k)]^2 =  \mu_{k}^{2}$. 

\paragraph{Step 3.}
In the final step of the proof, we verify \eqref{eq:UI}. Recall (\ref{def:V}) to observe that 
\begin{equation}
\begin{split}
 &\exp\left( \frac{1}{2\gamma}(1+2\eta)^2  V' \big(\wt{\Sigma}_{\Theta} \otimes \Sigma_{\Theta}+ \delta'_{n}I_{\kappa^2} \big)V \right)\\
 &= \exp\left( \frac{1}{2\gamma}(1+2\eta)^2 \mathrm{vec}(U^{(2)})' A'\big(\wt{\Sigma}_{\Theta} \otimes \Sigma_{\Theta}+ \delta'_{n}I_{\kappa^2} \big)A \mathrm{vec}(U^{(2)}) \right).
\end{split}
\end{equation}
Now write 
\[
\wt{U}^{(2)}= D_{U}^{-{1}/{2}} U^{(2)}
\]
where 
\begin{equation}\label{form:D}
D_U = \wt{\Sigma}_U \otimes I_p \in \reals^{\kappa p\times \kappa p}.
\end{equation}
So we have 
\begin{equation}
\begin{split}
 & \exp\left( \frac{1}{2\gamma}(1+2\eta)^2 (\mathrm{vec}(U)^{(2)})' A'\left(\wt{\Sigma}_{\Theta} \otimes \Sigma_{\Theta}+ \delta'_{n}I_{\kappa^2} \right)A \mathrm{vec}(U)^{(2)} \right)\\
 &=  \exp\left( \frac{1}{2\gamma}(1+\eta) \mathrm{vec}(\wt{U}^{(2)})' D_{U}^{{1}/{2}}A'\left(\wt{\Sigma}_{\Theta} \otimes \Sigma_{\Theta}+ \delta'_{n}I_{\kappa^2} \right) A D_{U}^{{1}/{2}} \mathrm{vec}(\wt{U}^{(2)}) \right).
\end{split}
\end{equation}
Theorem 1 from \cite{hsu12} implies for any non-random non-negative definite $\widetilde{\Sigma}$ and all $t > 0$,
\begin{equation}\label{HSU}
\Prob\left[ \left( \mathrm{vec}(\wt{U}^{(2)})' \widetilde{\Sigma}\mathrm{vec}(\wt{U}^{(2)}) \right)> \Tr(\wt{\Sigma})+ \sqrt{\Tr(\wt{\Sigma}^2)t}+ 2 \|\wt{\Sigma}\|_2 t \right]\le e^{-t}.
\end{equation}
In particular, the tail bound in (\ref{HSU}) only depends on the nonzero eigenvalues of $\wt{\Sigma}$.
Now the nonzero eigenvalues of 
$$
D_{U}^{{1}/{2}}A'\left(\wt{\Sigma}_{\Theta} \otimes \Sigma_{\Theta}+ \delta'_{n}I_{\kappa^2} \right) AD_{U}^{{1}/{2}}
$$ 
are the same as those of 
$$ 
A D_{U} A' \left(\wt{\Sigma}_{\Theta} \otimes \Sigma_{\Theta}+ \delta'_{n}I_{\kappa^2} \right).
$$ 
However on $\Omega_n^{(1)}$, we have $AD_{U}A'= \wt{\Sigma}_{U} \otimes \Sigma_{U} + P  $ where $P$ is a perturbation matrix with $\maxnorm{P}= O(\delta_n)$. 
As a consequence, Theorem 5.5.4 of \cite{stsu90} implies that the nonzero eigenvalues of
$$ A D_{U} A' \left(\wt{\Sigma}_{\Theta} \otimes \Sigma_{\Theta}+ \delta'_{n}I_{\kappa^2} \right)$$ are the eigenvalues of $$ (\wt{\Sigma}_{U} \otimes \Sigma_{U})(\wt{\Sigma}_{\Theta} \otimes \Sigma_{\Theta})+ O(\delta_{n}) .$$ 
Here the constant in the $O(\delta_{n})$ term depends on the eigenvalues of $(\wt{\Sigma}_{U} \otimes \Sigma_{U})(\wt{\Sigma}_{\Theta} \otimes \Sigma_{\Theta})$ and $\gamma$, but not on $n$ and $p$. 

For convenience, we define 
$\wt{\Sigma}:= D_{U}^{{1}/{2}}A' (\wt{\Sigma}_{\Theta} \otimes \Sigma_{\Theta}+ \delta'_{n}I_{\kappa^2} ) AD_{U}^{{1}/{2}}$.
On $\Omega_{n}^{(1)}$, $\Tr(\wt{\Sigma})$ and $\Tr(\wt{\Sigma}^2)$ are uniformly bounded. 
So given any $\epsilon>0$, there exists a sufficiently large $t_0 > 0$ that is independent of $n$ such that for all $t \geq t_0$,
\[
\frac{\Tr(\wt{\Sigma})+ \sqrt{\Tr(\wt{\Sigma}^2)t}}{t} \le \epsilon.
\] 
So for all $t>t_{0}$ we have
\begin{equation*}
\begin{split}
&\Indc_{\Omega_n^{(1)}}\Prob\left[ \frac{1}{\gamma}\mathrm{vec}(\wt{U}^{(2)})' \wt{\Sigma}\mathrm{vec}(\wt{U}^{(2)}) > \left(2 \frac{\|\wt{\Sigma}\|_{2}}{\gamma}+\epsilon\right)t \left |   \Theta^{(1)},U^{(1)}\right.\right] \le e^{-t},
\end{split}
\end{equation*}
and hence
\begin{equation}\label{final}
\begin{split}
&\Indc_{\Omega_n^{(1)}} \Prob\left[ \exp\left( \frac{1}{2\gamma} (1+ 2\eta)^2 \mathrm{vec}(\wt{U}^{(2)})' \wt{\Sigma}\mathrm{vec}(\wt{U}^{(2)}) \right)>t \left |   \Theta^{(1)},U^{(1)}\right.\right]\\
&\le \left(\frac{1}{t}\right)^{\frac{2}{(1+2\eta)^2
(2{\|\wt{\Sigma}\|_{2}}/{\gamma}+\epsilon)
}
}
\end{split}
\end{equation}
Since ${\|(\wt{\Sigma}_{U} \otimes \Sigma_{U})(\wt{\Sigma}_{\Theta} \otimes \Sigma_{\Theta})\|_{2}} < {\gamma}$, we can choose $\epsilon$ and $\eta$ small enough such that on $\Omega_n^{(1)}$,
\[
\frac{2}{(1+2\eta)^2
(2{\|\wt{\Sigma}\|_{2}}/{\gamma}+\epsilon)
}
\ge  \alpha_{0}>1.
\]
Hence we have the last expression in (\ref{final}) is bounded from above by ${t^{-\alpha_0}}$. As a consequence, 
\[
\Expect\left[\Indc_{\Omega_{n}^{(1)}}\exp\left(  \frac{1}{2\gamma} (1+ 2\eta)^2 \mathrm{vec}(\wt{U}^{(2)})' \wt{\Sigma}\mathrm{vec}(\wt{U}^{(2)})  \right)  \right]
\]
is uniformly bounded. This completes the proof.

\subsection{Proof of part 3}
We have from the proof of Proposition \ref{prop:norcont} that for any given $\epsilon,\delta >0$ there exists $K=K(\epsilon,\delta)$ and for any subsequence $n_l$ there exists a further subsequence $n_{l_q}$ such that 
\begin{equation}\label{bdd:liketoseriesIII}
\mathbb{P}_{n_{l_q}}\left[ \left| \log(L_{n_{l_q}}) - 
\sum_{k=1}^{K} \frac{2\mu_{k}(B_{n_{l_q},k}-p\Indc_{k=1})-\mu_{k}^{2}}{2\sigma_{k}^{2}} \right|\ge \frac{\epsilon}{2} \right] \le  \frac{\delta}{2}.
\end{equation} 
Now choose $K'\ge K$ such that 
\[
\sum_{K'+1}^{\infty}\frac{\mu_{k}^{2}}{\sigma_{k}^{2}}\le \max\left\{\frac{\delta\epsilon^{2}}{100},\frac{\epsilon}{100}\right\}.
\]
Now observe that for any $k_{1}<k_{2}<m_{n}= o(\sqrt{\log n})$, $\Expect_{\mathbb{P}_{n}}\left[ B_{n,k_{1}} \right]=0$, $\Cov(B_{n,k_{1}},B_{n,k_{2}})=0$ and $\Var(B_{n,k_{i}})= 2k_{1}\gamma^{k_{i}}\left(1+ O(\frac{k_{i}^{2}}{n})\right)$ for $i \in \{1,2 \}.$
So 
\[
\Var\left[\sum_{k=K'+1}^{m_{n_{l_q}}} \frac{2\mu_{k} B_{n_{l_q},k}-\mu_{k}^{2}}{2\sigma_{k}^{2}}\right]= \left(1+ O\left(\frac{m_{n_{l_q}}^{2}}{n_{l_q}}\right)\right)\sum_{k=K'+1}^{m_{n_{l_q}}}\frac{\mu_{k}^{2}}{\sigma_{k}^{2}}\le \frac{\delta\epsilon^{2}}{100}.
\]
Now for large value of $n_{l_q}$,
\begin{equation}\label{bdd:series}
\begin{split}
&\mathbb{P}_{n_{l_q}}\left[\left|\sum_{k=K+1}^{m_{n_{l_q}}} \frac{2\mu_{k}B_{n_{l_q},k}}{\sigma_{k}^{2}}\right| \ge \frac{\epsilon}{4}\right] \le \frac{16 \delta\epsilon^{2}}{100\epsilon^{2}},
\qquad \mbox{and so} \\
& \mathbb{P}_{n_{l_q}} \left[\left|\sum_{k=K+1}^{m_{n_{l_q}}} \frac{2\mu_{k}\left(B_{n_{l_q},k}\right)-\mu_{k}^{2}}{2\sigma_{k}^{2}}  \right| \ge \frac{\epsilon}{4}+ \frac{\epsilon}{100}  \right]\le\frac{16 \delta\epsilon^{2}}{100\epsilon^{2}}.
\end{split}
\end{equation}

\noindent Plugging in the estimates of (\ref{bdd:liketoseriesIII}) and (\ref{bdd:series}) we have for all large values of $n_{l_q}$,
\begin{equation}
	\label{eq:subseq-conv}
\mathbb{P}_{n_{l_q}}\left[  \left| \log(L_{n_{l_q}}) - \sum_{k=1}^{m_{n_{l_q}}} \frac{2\mu_{k}(B_{n_{l_q},k}- p\Indc_{k=1})-\mu_{k}^{2}}{2\sigma_{k}^{2}}
\right| \ge \epsilon \right]\le \delta.
\end{equation}
Since \eqref{eq:subseq-conv} occurs to any subsequence and any $(\epsilon,\delta)$ pair, this completes the proof.  
\hfill{$\square$}

\appendix

\section{Appendix: Proof of Proposition \ref{prop:norcont}}
At first we introduce the concept of Wasserstein's metric which will be used in the proof of Proposition \ref{prop:norcont}. 
Let $F$ and $G$ be two distribution functions with finite $p$-th moment. Then the Wasserstein distance $W_p$ between $F$ and $G$ is defined to be 
\[
W_p(F,G)= \left[  \inf_{X \sim F, Y \sim G} \Expect|X-Y|^{p} \right]^{{1}/{p}}.
\]
Here $X$ and $Y$ are random variables having distribution functions $F$ and $G$ respectively.   
The following result will be useful in our proof. 
See, for instance, \citet{Mal72} for its proof.
\begin{prop}\label{prop:wasser}
Suppose $F_n$ be a sequence of distribution functions and $F$ be a distribution function. Then $F_n\stackrel{d}{\to} F$ in distribution and $\int x^2 dF_n(x)\to \int x^2dF(x)$ if $W_2(F_n,F)\to 0$.  
\end{prop}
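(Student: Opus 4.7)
The plan is to reduce everything to the coupling formulation of the Wasserstein-2 distance. For each $n$, choose a coupling $\pi_n$ of $F_n$ and $F$ for which $(\int |x-y|^2\, d\pi_n(x,y))^{1/2}$ lies within $1/n$ of $W_2(F_n,F)$, and realize it as a pair $(X_n, Y_n)$ on a common probability space with $X_n \sim F_n$, $Y_n \sim F$, and $\Expect[(X_n - Y_n)^2]^{1/2} \to 0$. Both conclusions should then drop out of this single construction.

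For the weak convergence half, the $L^2$ convergence $X_n - Y_n \to 0$ forces convergence in probability of the difference, so writing $X_n = Y_n + (X_n - Y_n)$ with $Y_n \sim F$ fixed and applying Slutsky's theorem yields $X_n \stackrel{d}{\to} F$, which is exactly $F_n \stackrel{d}{\to} F$. For the second-moment half, the reverse triangle inequality for the $L^2$ norm on random variables gives
\[
\bigl|\|X_n\|_{L^2} - \|Y_n\|_{L^2}\bigr| \;\le\; \|X_n - Y_n\|_{L^2} \;\to\; 0,
\]
and squaring translates this into $\int x^2\, dF_n \to \int x^2\, dF$, since $\|X_n\|_{L^2}^2 = \int x^2\, dF_n$ and similarly for $Y_n$.

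The only point requiring mild care is the existence of the near-optimal couplings $\pi_n$, but this is a standard measure-theoretic matter: a minimizing sequence of couplings always exists by the definition of the infimum, and in the standard setting where $F_n$, $F$ are Borel on $\mathbb{R}$ with finite second moment an actual optimizer exists by tightness of the marginals. Beyond this, I foresee no real obstacle, as the argument is essentially a one-line reduction to elementary properties of $L^2$ convergence.
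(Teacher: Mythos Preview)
Your argument is correct and self-contained. The paper itself does not prove this proposition; it simply refers the reader to Mallows (1972). Your coupling argument is the standard one: near-optimal couplings $(X_n,Y_n)$ with $X_n\sim F_n$, $Y_n\sim F$ and $\|X_n-Y_n\|_{L^2}\to 0$ immediately give convergence in probability of the difference (hence weak convergence via Slutsky) and, by the reverse triangle inequality in $L^2$, convergence of $\|X_n\|_{L^2}$ to the constant $\|Y_n\|_{L^2}=(\int x^2\,dF)^{1/2}$. One small clarification worth adding: the hypothesis $W_2(F_n,F)<\infty$ already presupposes $\int x^2\,dF_n<\infty$ and $\int x^2\,dF<\infty$, so the $L^2$ norms you manipulate are indeed finite.
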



\begin{proof}[Proof of Proposition \ref{prop:norcont}]
We now prove the proposition.
\paragraph{Proof of mutual contiguity and \eqref{eq:lr-limit}:}
This proof is broken into two steps.
We focus on proving \eqref{eq:lr-limit}.
Given \eqref{eq:lr-limit}, mutual contiguity is a direct consequence of Le Cam's first lemma \cite{LeCam}.

\textbf{Step 1.}
We first prove the random variable on the righthand side of (\ref{eq:lr-limit}) is almost surely positive and has mean $1$. 
Let us define 
\[
L:=\exp\left\{ \sum_{i=1}^{\infty}\frac{2\mu_{i}Z_{i}-\mu_{i}^{2}}{2\sigma_{i}^{2}} \right\},\qquad
L^{(m)}:= \exp\left\{ \sum_{i=1}^{m}\frac{2\mu_{i}Z_{i}-\mu_{i}^{2}}{2\sigma_{i}^{2}} \right\},\quad
\forall m\in \naturals.
\]
As $Z_i \sim N(0,\sigma_{i}^{2})$, for any $i\in\naturals$, and so
\[
\Expect\left[\frac{2\mu_{i}Z_{i}-\mu_{i}^{2}}{2\sigma_{i}^{2}} \right]=1.
\]
So $\{L^{(m)}\}_{m=1}^{\infty}$ is a martingale sequence and 
\[
\Expect\left[ \big(L^{(m)}\big)^2 \right]=\prod_{i=1}^{m} \exp\left\{ \frac{\mu_{i}^{2}}{\sigma_{i}^{2}} \right\}=\exp\left\{ \sum_{i=1}^{m} \frac{\mu_{i}^{2}}{\sigma_{i}^{2}} \right\}.
\]
Now by the righthand side of (\ref{eq:lr-square}),
$L^{(m)}$ is a $L^2$ bounded martingale.
Hence, $L$ is a well defined random variable with
\[
\Expect[L] = 1,\qquad
\Expect[L^2]= \exp\left\{ \sum_{i=1}^{\infty} \frac{\mu_{i}^{2}}{\sigma_{i}^{2}} \right\}.
\]
On the other hand $\log(L)$ is a limit of Gaussian random variables, hence $\log(L)$ is Gaussian with
\[
\Expect[\log(L)]= -\frac{1}{2} \sum_{i=1}^{\infty} \frac{\mu_{i}^{2}}{\sigma_{i}^{2}}, \qquad \Var[\log(L) ]= \sum_{i=1}^{\infty} \frac{\mu_{i}^{2}}{\sigma_{i}^{2}}.
\]
Hence $\mathbb{P}[L=0]= \mathbb{P}[\log(L)=-\infty]=0$. 

\textbf{Step 2.}
Now we prove $Y_n \stackrel{d}{\to} L$.
Since
\[
\limsup_{n \to \infty}\Expect_{\mathbb{P}_n}\left[ Y_n^2\right]<\infty,
\]
condition (iv) implies that the sequence $Y_n$ is tight. 
Prokhorov's theorem further implies that there is a subsequence 
$\{ n_{k} \}_{k=1}^{\infty}$ such that $Y_{n_k}$ converge in distribution to some random variable $L(\{ n_{k} \})$. 
In what follows, we prove that the distribution of $L(\{ n_{k} \})$ does not depend on the subsequence $\{ n_{k} \}$. In particular, $L(\{ n_{k} \})\stackrel{d}{=} L$.
To start with, note that since $Y_{n_k}$ converges in distribution to $L(\{ n_{k} \})$, for any further subsequence $\{ n_{k_l} \}$ of $\{ n_{k}\}$, $Y_{n_{k_l}}$ also converges in distribution to $L(\{ n_{k} \})$.

Given any fixed $\epsilon>0$ take $m$ large enough such that 
\[
\exp\left\{ \sum_{i=1}^{\infty} \frac{\mu_{i}^{2}}{\sigma_{i}^{2}} \right\}-\exp\left\{ \sum_{i=1}^{m} \frac{\mu_{i}^{2}}{\sigma_{i}^{2}} \right\} < \epsilon.
\]
For this fixed number $m$, consider the joint distribution of 
$(Y_{n_{k}},W_{n_k,1},\ldots,W_{n_k,m})$. 
This sequence of $m+1$ dimensional random vectors with respect to $\mathbb{P}_{n_k}$ is tight by condition (ii). 
So it has a further subsequence such that $$(Y_{n_{k_l}},W_{n_{k_l},1},\ldots,W_{n_{k_l},m})|\mathbb{P}_{n_{k_l}}\stackrel{d}{\to}\left(L(\{ n_{k} \}),Z_{3}\ldots,Z_{m}\right).$$ 
We are to show that we can define the random variables $L^{(m)}$ and $L(\{ n_{k} \})$ in such a way that there exist suitable $\sigma$-algebras $\mathcal{F}_1 \subset\mathcal{F}_2$ such that 
$L^{(m)} \in \mathcal{F}_1$, 
$L(\{ n_{k} \}) \in \mathcal{F}_2$,
and $\Expect \left[ L(\{ n_{k} \})\left|\right. \mathcal{F}_{1} \right]= L^{(m)}$.

Since $\limsup_{n \to \infty}\Expect_{\mathbb{P}_n}\left[ Y_n^2\right]< \infty$, the sequence $Y_{n_{k_l}}$ is uniformly integrable. 
This, together with condition (i), leads to
\begin{equation}\label{eqn_expder}
\Expect[L(\{ n_{k} \})] = \lim_{l\to\infty} 
\Expect_{\mathbb{P}_{n_{k_l}}} [ Y_{n_{k_l}} ] = 1.
\end{equation}
Now take any positive bounded continuous function $f:\mathbb{R}^m \to \mathbb{R}$. By Fatou's lemma 
\begin{equation}
	\label{eqn_ineq}
\liminf_{l\to\infty} 
\Expect_{\mathbb{P}_{n_{k_l}}} \left[f (W_{n_{k_l},1},\ldots,X_{n_{k_l},m} )Y_{n_{k_l}} \right] \ge \Expect \left[ f\left(Z_1,\ldots,Z_{m}\right)L(\{ n_{k} \})\right].
\end{equation}
However for any constant $\xi$, \eqref{eqn_expder} implies
$\xi=\xi\Expect_{\mathbb{P}_{n_{k_l}}}[ Y_{n_{k_l}} ] \to \xi\Expect[L(\{ n_{k} \})]= \xi$.
So (\ref{eqn_ineq}) holds for any bounded continuous function $f$. 
On the other hand, replacing $f$ by $-f$ we have 
\begin{equation}
	\label{eqn_ineqII}
\lim_{l\to\infty} 
\Expect_{\mathbb{P}_{n_{k_l}}}
\left[f(W_{n_{k_l},1},\ldots,W_{n_{k_l},m})Y_{n_{k_l}} \right] 
= \Expect \left[ f(Z_1,\ldots,Z_{m})L(\{ n_{k} \})\right].
\end{equation}
Now condition (ii) leads to
\begin{equation*}
\int f(W_{n_{k_l},1},\ldots,W_{n_{k_l},m})Y_{n_{k_l}} \mathrm{d}\mathbb{P}_{n_{k_l}}= \int f(W_{n_{k_l},1},\ldots,W_{n_{k_l},m})\mathrm{d}\mathbb{Q}_{n_{k_l}} \to \int f(Z_1',\ldots,Z_{m}') \mathrm{d}Q.
\end{equation*}
Here  $Q$ is the measure induced by $(Z_1',\ldots,Z_{m}')$. In particular, one can take the measure $Q$ such that
$(Z_1,\ldots, Z_{m})$ themselves are distributed as $(Z_1',\ldots,Z_{m}')$ under the measure $Q$. 
This is true since 
\[
\int f(Z_1',\ldots, Z_{m}') \mathrm{d}Q= \Expect\left[ f(Z_1,\ldots, Z_{m}) L^{(m)}\right].
\]
for any bounded continuous function $f$, and so
$\int_{A}  \mathrm{d}Q= \Expect[ \mathbf{1}_{A} L^{(m)} ]$
for any $A \in \sigma(Z_1,\ldots, Z_{m})$. 
Now looking back into (\ref{eqn_ineqII}), we have for any $A \in \sigma(Z_1,\ldots, Z_{m})$,
$\Expect[ \mathbf{1}_{A} L^{(m)} ]= \Expect\left[ \mathbf{1}_{A} L(\{ n_{k} \}) \right]$.
Since $L^{(m)}$ is $\sigma(Z_1,\ldots, Z_{m})$ measurable, we have 
\begin{equation*}
L^{(m)}=\Expect\left[ L(\{ n_{k} \}) \left|\right. \sigma(Z_1,\ldots, Z_{m})  \right].	
\end{equation*}

From Fatou's lemma 
\[
\Expect[ L(\{ n_{k} \})^2]\le \liminf_{n \to \infty} \Expect_{\mathbb{P}_n}[Y_n^2]= \exp\left\{ \sum_{i=1}^{\infty} \frac{\mu_i^{2}}{\sigma_{i}^{2}} \right\}.
\]
As a consequence, we have 
\[
0 \le \Expect|L(\{ n_{k} \})-L^{(m)}|^2 = \Expect[L(\{ n_{k} \})^2]-\Expect[L^{(m)2}]< \epsilon.
\]
So $L_2(F^{L^{(m)}},F^{L(\{ n_{k} \})})< \sqrt{\epsilon}$. Here $F^{L^{(m)}}$ and $F^{L(\{ n_{k} \})}$ denote the distribution functions corresponding to $L^{(m)}$ and $L(\{ n_{k} \})$ respectively. As a consequence, $W_2(F^{L^{(m)}},F^{L(\{ n_{k} \})}) \to 0$ as $m \to \infty.$ Hence by Proposition \ref{prop:wasser}, $L^{(m)} \stackrel{d}{\to} L(\{ n_{k} \})$. 
On the other hand, we have already proved $L^{(m)}$ converges to $L$ in $L^2$. So $L(\{ n_{k} \})\stackrel{d}{=}L$. 



\paragraph{Proof of \eqref{eq:janson-decomp}:}
Consider any fixed pair of $(\epsilon,\delta)\in (0,1)\times (0,1)$.
First observe that the sequence $\log(Y_{n})$ is tight from the proof of the previous part. 
For the given $\delta$, there exists a fixed number $M< \infty$ such that $\mathbb{P}_{n}\left[ -M \le \log(Y_{n}) \le M \right]\ge 1- \frac{1}{100}\delta$ for all $n$, 
implying $\mathbb{P}_{n}\left[  e^{-M} \le Y_{n} \le e^{M}\right]\ge 1- \frac{1}{100}\delta$. 
Now consider $\tau \in (0, e^{-M}).$ The function $\log(\cdot)$ is uniformly continuous on $[\tau, e^{M+1}]$. 
On this interval consider $\tilde{\epsilon}$ such that $\left|  \log(x)-\log(y)\right|< \frac{\epsilon}{4}$ for all $x,y$ on this interval with $|x-y|< \tilde{\epsilon}$. 
Let $\epsilon_{1}= \min\{ \tilde{\epsilon}, e^{-M}-\tau, e^{M+1}-e^{M}\}$ and pick a sufficiently large $K\in \naturals$ such that
\begin{equation}
	\label{choosingK}
\exp\left\{ \sum_{k=1}^{\infty} \frac{\mu_{k}^{2}}{\sigma_{k}^{2}}\right\}- \exp\left\{\sum_{k=1}^{K} \frac{\mu_{k}^{2}}{\sigma_{k}^{2}} 
\right\}
\le 
\frac{\delta \epsilon_{1}^{2}}{100}.
\end{equation}

From the proof of the previous part, we also know given any subsequence $n_{l}$ there exists a further subsequence $\{ n_{l_{m}} \}$ so that under $\bbP_n$,
\begin{equation*}
\left(Y_{n_{l_{m}}}, \exp\left\{\sum_{k=1}^{K} 
\frac{2\mu_{k}\left(W_{n_{l_{m}},k}\right)-\mu_{k}^{2}}{2\sigma_{k}^{2}} \right\}\right)
\stackrel{d}{\to} 
\left(L, \exp\left\{\sum_{k=1}^{K} \frac{2\mu_{k}\left(Z_{k}\right)-\mu_{k}^{2}}{2\sigma_{k}^{2}}\right\}\right)	
\end{equation*}
and 
\begin{equation*}
\Expect\left[ \left(L- \exp\left\{\sum_{k=1}^{K} \frac{2\mu_{k}\left(Z_{k}\right)-\mu_{k}^{2}}{2\sigma_{k}^{2}}\right\}\right)^2 \right]\le \frac{\delta \epsilon_{1}^{2}}{100}.	
\end{equation*}
As a consequence,
\begin{equation}\label{eqn:bdd:liketoseries}
\begin{split}
\limsup_{n_{l_{m}} \to \infty} \mathbb{P}_{n_{l_{m}}}\left[ \left|Y_{n_{l_{m}}}- \exp\left\{\sum_{k=1}^{K} \frac{2\mu_{k}\left(W_{n_{l_{m}},k}\right)-\mu_{k}^{2}}{2\sigma_{k}^{2}}\right\}\right| \ge \frac{\epsilon_{1}}{2} \right]&\\
\le \mathbb{P}\left[ \left| (L- \exp\left\{\sum_{k=1}^{K} \frac{2\mu_{k}\left(Z_{k}\right)-\mu_{k}^{2}}{2\sigma_{k}^{2}}\right\} \right|\ge \frac{\epsilon_{1}}{2} \right]&\le \frac{\delta}{25}.
\end{split}
\end{equation}
As a consequence, for large values of $n_{l_{m}}$,
\begin{equation}\label{bdd:liketoseriesII}
\begin{split}
\mathbb{P}_{n_{l_{m}}}\left[  \left|Y_{n_{l_{m}}}- \exp\left\{\sum_{k=1}^{K} \frac{2\mu_{k}\left(W_{n_{l_{m}},k}\right)-\mu_{k}^{2}}{2\sigma_{k}^{2}} \right\}\right| 
\ge \frac{\epsilon_{1}}{2} \,\,\mbox{and}\,\, Y_{n_{l_{m}}} \notin [e^{-M},e^{M}] \right]
&\\ 
\le \frac{\delta}{25}+ \frac{\delta}{100}
& <\frac{\delta}{2}.
\end{split}
\end{equation}
Therefore, for large values of $n_{l_m}$,
\begin{equation*}
\mathbb{P}_{n_{l_{m}}}\left[ \left| \log(Y_{n_{l_{m}}}) - \left\{\sum_{k=1}^{K} \frac{2\mu_{k}\left(W_{n_{l_{m}},k}\right)-\mu_{k}^{2}}{2\sigma_{k}^{2}} \right\}\right|\ge \frac{\epsilon}{2} \right] \le  \frac{\delta}{2}.
\end{equation*}
This completes the proof.
\end{proof}
\bibliographystyle{abbrv}
\bibliography{zm}

\end{document}